\newtheorem{theorem}{Theorem}[section]
\newtheorem{lemma}[theorem]{Lemma}
\newtheorem{prop}[theorem]{Proposition}
\newtheorem{defi}[theorem]{Definition}
\newenvironment{customlemma}[1]
  {\innercustomlemma}
  {\endinnercustomlemma}
\newenvironment{customprop}[1]
  {\innercustomprop}
  {\endinnercustomprop}
\newenvironment{customdefi}[1]
  {\innercustomdefi}
  {\endinnercustomdefi}
\theoremstyle{remark}
\newtheorem*{rem}{Remark}
\theoremstyle{remark}
\newtheorem*{ex}{Example}
\newcommand{\tendn}{\underset{n \rightarrow + \infty }{\longrightarrow}}
\newcommand{\Span}{\textnormal{Span}}
\newcommand{\graph}{\textnormal{graph}}
\begin{document}

\title{$p$-dimensional cones and applications}
\author{Maxence Novel}
\date{}
\maketitle

\begin{abstract}
We introduce a notion of $p$-dimensional cones made of $p$-dimensional subspaces and gauges on these cones, giving rise to a contraction principle which generalizes the one for Birkhoff cones. We prove a spectral gap result for the $p$ largest eigenvalues of an operator and a regularity result for the characteristic exponents of a random product of cone-contracting operators.
\end{abstract}


\section{Introduction}

The Perron-Frobenius Theorem \cite{Per07,Fro08} asserts that a real square matrix with strictly positive entries has a ‘spectral gap’, i.e. has a positive simple eigenvalue and the remaining part of the spectrum is strictly smaller in modulus.

In a seminal paper \cite{Bir57}, Birkhoff generalized this theory: he showed a contraction principle for projective cones $\mathcal{C}$ equipped with the Hilbert metric. If a linear map $T$ satisfies $T(\mathcal{C}^*) \subset \mathcal{C}^*$, then it is a contraction for this metric. Moreover, if the diameter of $T(\mathcal{C}^*)$ is finite for this metric, then $T$ is a strict contraction. 

These notions for real cones were extended to complex cones, introduced by Rugh \cite{Rug10}. The gauge used is no longer necessarily a metric, and rely on the hyperbolic metric on sections of the cones. Using this theory, he obtained a contraction principle for complex cones, a spectral gap theorem, and regularity for the first characteristic exponent of random products of operators preserving a complex cone. This was further studied by Dubois \cite{Dub09}, with a new gauge easier to use and which is a metric for some specific complex cones.

Can we use these theories to obtained some higher dimensional information ? For example, we would like to use cone contraction theory to obtain a 'p-dimensional' spectral gap for an operator $T$ : $T$ has $p$ largest eigenvalues $\lambda_1,\dots,\lambda_p$ (counted with multiplicity) with $|\lambda_1| \geq\dots\geq|\lambda_p|$, and the remaining part of the spectrum is contained in a disk centered at zero of radius strictly smaller than $|\lambda_p|$. As for characteristic exponents, we would like to get some regularity for the first $p$ exponents.

In \cite{Rue79}, Ruelle remarks that such analytic properties  can be obtained for some families of operators : he applied the result for the first exponent to the $p$-exterior product of operators $T\wedge\cdots\wedge T$. However, cone contraction can be difficult to check for $p$-exterior products, especially as there is no canonical norm on the exterior product $\bigwedge^pE$ of a Banach space $E$. More recently, Blumenthal, Quas and Gonzalez-Tokman have used several tools on grassmannian spaces to get more concise and intuitive proofs for the Oseledets theorem \cite{Blu16,GTQ15}. They also obtained, with Morris and Froyland, results on splittings and stability of characteristic exponents \cite{BM15,FGQ15}. This approach avoids the use of exterior algebra, making it more geometric and natural. In this paper, we will use similar tools but will still use the exterior algebra and link those two concepts to get our results.

In Section \ref{preliminaries}, we recall some results known on one-dimensional complex cones from \cite{Rug10} and \cite{Dub09}. We introduce the notion of complex cone, and some projective gauge $\delta_\mathcal{C}$, giving rise to a contraction principle. Under reasonable regularity assumptions on the complex cone, such a contraction has a spectral gap. In Section \ref{definitions}, we define the notion of $p$-dimensional cone as a union of $p$-dimensional subspaces which doesn't contain any $(p+1)$ dimensional subspace. We can construct a cone distance $d_{\mathcal{C}}(V,W)$ between $p$-dimensional subspace by taking the supremum of $\delta_\mathcal{C}(x,y)$ over $x \in V$ and $y \in W$. We then deduce from the contraction principle on one-dimensional complex cone a contraction principle for $p$-dimensional cones. In Section \ref{metrics}, we define two different metrics on the set $\mathcal{G}_p(E)$ of $p$-dimensional subspaces of $E$. The first is defined as the Hausdorff distance between the unit spheres and has some nice topological properties. The second one is obtained by identifying $V=\textnormal{Span}(x_1,\dots,x_p)$ with $x_1 \wedge \dots \wedge x_p \in \bigwedge\nolimits^pE$. We thus has to define and study a norm on $\bigwedge\nolimits^pE$, in order to get a very useful result : these two metrics on $\mathcal{G}_p(E)$ are equivalent. In Section \ref{spectral gap}, we generalize the regularity notions from complex cones to $p$-dimensional cones to obtain a spectral gap theorem for $p$-cone contractions. Finally, in Section \ref{random-section}, we prove a result on the analyticity of characteristic exponents of random products of cone contractions. As an example, if $(\xi_n)_{n \geq 0}$ is a sequence of independent and identically distributed random variables with values in $\mathbb{D}$, then for $t \in \mathbb{D}$ the characteristic exponents of the product of
$$M_n(t) = \begin{pmatrix}
10+t\xi_n &t+\xi_n & it \\
t+\xi_n & 6 &\xi_n \\
i\xi_n &0 &1
\end{pmatrix}$$
is analytic in $t$.

\section{Preliminaries}
\label{preliminaries}

In this section, we recall some definitions and results from \cite{Rug10} and \cite{Dub09}, which will serve as a foundation for our work. Let $E$ be a complex Banach space.

\begin{defi}
We say that a subset $\mathcal{C} \subset E$ is a closed complex cone if it is closed in $E$, $\mathbb{C}$-invariant and $\mathcal{C} \neq \{0\}$. Such a cone is said to be proper if it contains no complex planes. We will refer to proper closed complex cones as $\mathbb{C}$-cones.
\end{defi}

\begin{defi}
Given $\mathcal{C}$ a $\mathbb{C}$-cone and $x,y \in \mathcal{C}^*$. We define a gauge $\delta_{\mathcal{C}}:\mathcal{C}^*\times \mathcal{C}^* \rightarrow [0,\infty]$ between $x$ and $y$ as follows:
\begin{itemize}
\item If $x$ and $y$ are co-linear we set $\delta_{\mathcal{C}}(x,y)=0$
\item If $x$ and $y$ are linearly independant, we look at the section of $\mathcal{C}$ by the affine complex plane $x+\mathcal{C}(y-x)$. If $x$ and $y$ are in the same connected component $U$ of this section, we set $\delta_{\mathcal{C}}(x,y)=d_U(x,y)$ where $d_U$ is the hyperbolic metric on $U$. Else we set $\delta_{\mathcal{C}}(x,y)=\infty$.
\end{itemize}
This gauge is symmetric and projective, i.e. $\delta_{\mathcal{C}}(y,x)= \delta_{\mathcal{C}}(x,y) = \delta_{\mathcal{C}}(\lambda x,y) =\delta_{\mathcal{C}}(x,\lambda y)$.
\end{defi}

\begin{defi}
Given $\mathcal{C}$ a $\mathbb{C}$-cone and $x,y \in \mathcal{C}^*$. We consider the set $E_{\mathcal{C}}(x,y)=\{ z \in \mathbb{C} ~ |~ zx-y \notin \mathcal{C} \}$. We define a gauge $\delta_{\mathcal{C}}:\mathcal{C}^*\times \mathcal{C}^* \rightarrow [0,\infty]$ between $x$ and $y$. as follows:
\begin{itemize}
\item If $x$ and $y$ are co-linear we set $\delta_{\mathcal{C}}(x,y)=0$
\item If $x$ and $y$ are linearly independent, $\delta_{\mathcal{C}}(x,y)=\log(b/a)$, where $b=\sup |E_{\mathcal{C}}(x,y)|$ and $a= \inf |E_{\mathcal{C}}(x,y)|$.
\end{itemize}
This gauge is symmetric and projective.
\end{defi}

The following results are valid for each of these gauges.

\begin{prop}
\label{contraction-1}
Let $E_1, E_2$ be complex Banach spaces. Let  $T : E_1 \rightarrow E_2$ a continuous linear map and $\mathcal{C}_1 \subset E_1$,  $\mathcal{C}_2 \subset E_2$ two $\mathbb{C}$-cones such that $T(\mathcal{C}_1^*) \subset \mathcal{C}_2^*$. Then for all $x,y \in \mathcal{C}_1^*$, we have $\delta_{\mathcal{C}_2}(Tx,Ty) \leq \delta_{\mathcal{C}_1}(x,y).$

Moreover, if $\Delta = \textnormal{diam}_{\mathcal{C}_2}(T \mathcal{C}_1) < \infty$, there exists $\eta <1$ depending only on $\Delta$ such that for all $x,y \in \mathcal{C}_1^*$ we have $$\delta_{\mathcal{C}_2}(Tx,Ty) \leq \eta .\delta_{\mathcal{C}_1}(x,y).$$
\end{prop}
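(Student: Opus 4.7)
The plan is to reduce the statement to a one-dimensional complex-analytic fact (Schwarz--Pick), after checking that both gauges are compatible with linear maps. First I would dispatch the trivial cases: if $x$ and $y$ are colinear then $\delta_{\mathcal{C}_1}(x,y)=0$ and the inequality is vacuous, and if $Tx$ and $Ty$ are colinear the left-hand side vanishes. So assume $x,y$ and $Tx,Ty$ are both pairs of linearly independent vectors.

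For the hyperbolic-metric gauge, I would parametrize the affine line $x+\mathbb{C}(y-x)$ by $z\mapsto (1-z)x+zy$ and let $U\subset\mathbb{C}$ be the connected component of the preimage of $\mathcal{C}_1^*$ containing $0$ and $1$; do the same for $Tx,Ty$, obtaining $V$. The key observation is the commutation $T((1-z)x+zy)=(1-z)Tx+zTy$, which says that the inclusion $U\hookrightarrow V$ is a well-defined holomorphic map between hyperbolic Riemann surfaces. The Schwarz--Pick inequality then gives $d_V(0,1)\le d_U(0,1)$, i.e.\ $\delta_{\mathcal{C}_2}(Tx,Ty)\le \delta_{\mathcal{C}_1}(x,y)$. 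For Dubois' gauge the monotonicity is algebraic: if $z\notin E_{\mathcal{C}_1}(x,y)$ then $zx-y\in\mathcal{C}_1$, hence $zTx-Ty=T(zx-y)\in T\mathcal{C}_1\subset\mathcal{C}_2$, so $z\notin E_{\mathcal{C}_2}(Tx,Ty)$. Therefore $E_{\mathcal{C}_2}(Tx,Ty)\subset E_{\mathcal{C}_1}(x,y)$, which immediately gives $\sup|E_{\mathcal{C}_2}|/\inf|E_{\mathcal{C}_2}|\le \sup|E_{\mathcal{C}_1}|/\inf|E_{\mathcal{C}_1}|$ and the corresponding inequality on logarithms.

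For the strict contraction, I would exploit the finite diameter assumption to factor through a bounded subdomain. In the hyperbolic-metric picture, the hypothesis $\mathrm{diam}_{\mathcal{C}_2}(T\mathcal{C}_1)\le\Delta$ means the image of $U$ in $V$ has hyperbolic diameter at most $\Delta$, hence sits inside a hyperbolic ball $B\subset V$ of radius $\Delta$. The inclusion $B\hookrightarrow V$ is then a strict Schwarz contraction with ratio $\eta(\Delta)<1$, depending only on $\Delta$, obtained by lifting to the universal cover $\mathbb{D}$ and applying the classical Schwarz lemma at a uniformly interior point. Composing with the tautological weak contraction $U\to B$ yields $d_V\le\eta(\Delta)\,d_U$, which is the desired bound. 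For Dubois' gauge one repeats the same factorization argument using that a bounded ratio $b/a$ forces $E_{\mathcal{C}_2}(Tx,Ty)$ to lie in an annulus and extracts the contraction constant from the corresponding modulus estimate.

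The main obstacle is the uniformity of $\eta$: one must verify that the constant produced by the Schwarz lemma depends only on the diameter $\Delta$ and not on the particular choice of $x,y$ or on the shape of the sections $U,V$. This is exactly where the hypothesis $\Delta<\infty$ is used in its strongest form: it ensures that no matter which pair $(x,y)$ is chosen, the relevant domains can all be compared with a fixed Euclidean disk of explicit size, and therefore that a single $\eta=\eta(\Delta)<1$ works uniformly. Once this is in place the proposition follows from the pointwise inequality established above.
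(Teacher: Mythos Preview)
The paper does not give its own proof of this proposition: it appears in the preliminaries section as a result quoted from \cite{Rug10} and \cite{Dub09}. Your sketch is correct and is essentially the standard argument from those references---Schwarz--Pick on the affine sections for the hyperbolic gauge, the set inclusion $E_{\mathcal{C}_2}(Tx,Ty)\subset E_{\mathcal{C}_1}(x,y)$ for Dubois' gauge, and the uniform strict contraction obtained by trapping the image in a hyperbolic ball of radius $\Delta$. The only place that could use one more line is the uniformity of $\eta$: after uniformizing $V$ to $\mathbb{D}$ and sending one of the two marked points to $0$, the ball $B$ becomes a concentric Euclidean disk of radius $r=r(\Delta)<1$, and the explicit comparison $d_{\mathbb{D}}(0,\cdot)\le r\, d_{r\mathbb{D}}(0,\cdot)$ gives $\eta=\eta(\Delta)$ independent of $x,y$ and of the shape of $U,V$; this is exactly Rugh's Lemma~2.4, which the paper invokes later in the proof of Proposition~\ref{contraction}.
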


\begin{defi}
For $K>0$, we say that a $\mathbb{C}$-cone $\mathcal{C}$ is of $K$-bounded aperture if there exists a linear form $m:E \rightarrow \mathbb{C}$ such that $\forall x \in \mathcal{C}, \|m\|\|x\| \leq K |m(x)|$. \\The cone is of $K$-bounded sectional aperture if for all $x,y \in E$, $\mathbb{C}(x,y)$ is of $K$-bounded aperture, i.e. there exists a linear form $m_{x,y} : \textnormal{Span}(x,y) \rightarrow \mathbb{C}$ such that $\forall u \in \mathcal{C}(x,y), \|m\|\|u\| \leq K |m(u)|$.
\end{defi}

\begin{defi}
We say that a $\mathbb{C}$-cone $\mathcal{C}$ is reproducing if there exists a constant $A>0$ and $k \in \mathbb{N}^*$ such that for all $x \in E$, there exists $x_1, \dots, x_k \in \mathcal{C}$ satisfying $x=x_1+\dots+x_k$ and $\|x_1\|+\dots + \|x_k\| \leq A \|x\|$.
\end{defi}

\begin{theorem}
\label{gap-1}
Let $T \in \mathcal{L}(E)$ and let $\mathcal{C}$ be a reproducing $\mathbb{C}$-cone of bounded sectional aperture. Suppose that $T$ is a strict cone contraction, i.e. $T(\mathcal{C}^*) \subset \mathcal{C}^*$ and $\textnormal{diam}_{\mathcal{C}}(T \mathcal{C}) < \infty$ (cf Proposition \ref{contraction-1}). Then $T$ has a spectral gap.
\end{theorem}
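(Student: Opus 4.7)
The plan is to apply Proposition \ref{contraction-1} iteratively to produce a dominant eigenvector $v_1 \in \mathcal{C}^*$, promote it into a rank-one spectral projection using the bounded sectional aperture and reproducing hypotheses, and then estimate the remainder. The key technical preliminary is a \emph{cone-to-norm} inequality: if $x, y \in \mathcal{C}^*$ lie in the same connected component of the section $\mathcal{C} \cap (x + \mathbb{C}(y-x))$, are normalized by a common linear form $m_{x,y}$ coming from the $K$-bounded sectional aperture so that $m_{x,y}(x) = m_{x,y}(y)$, and satisfy $\delta_\mathcal{C}(x,y) \leq R$, then $\|x - y\| \leq C_{R,K} |m_{x,y}(x)| \cdot \delta_\mathcal{C}(x, y)$. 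This should follow because bounded sectional aperture forces the section $\{u \in \mathcal{C} : m_{x,y}(u) = m_{x,y}(x)\}$ to be a bounded open subset of a complex affine line on which the hyperbolic metric dominates the Euclidean one up to a factor depending only on this diameter and on $R$.

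With this lemma, I fix $x_0 \in \mathcal{C}^*$ and set $x_n = T^n x_0$. Proposition \ref{contraction-1} gives $\delta_\mathcal{C}(x_n, x_m) \leq \eta^{\min(n,m)} \Delta$, so a properly renormalized sequence $\hat x_n = x_n / m(x_n)$ is Cauchy in $E$ and converges to some $v_1 \in \mathcal{C}^*$ fixed by the projective action of $T$, yielding $Tv_1 = \lambda_1 v_1$ with $\lambda_1 \neq 0$. To manufacture the spectral projection, I define, for $x \in \mathcal{C}^*$, $\ell_1(x) = \lim_n m(T^n x)/(\lambda_1^n m(v_1))$. Existence, projectivity, and additivity on $\mathcal{C}$ follow because $T^n$ collapses all $\mathcal{C}^*$-directions towards $[v_1]$ at rate $\eta^n$ after renormalization, as the cone-to-norm lemma converts into a genuine norm statement. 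Using the reproducing hypothesis, any $x \in E$ admits a decomposition $x = \sum_{i=1}^k x_i$ with $x_i \in \mathcal{C}$ and $\sum \|x_i\| \leq A\|x\|$; I would set $\ell_1(x) = \sum_i \ell_1(x_i)$, verify well-definedness by reducing to the norm convergence above, and obtain a bounded linear functional on $E$. The rank-one operator $\pi_1 x = \ell_1(x) v_1$ is then a projection satisfying $T\pi_1 = \pi_1 T = \lambda_1 \pi_1$.

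The final step is to control $Q = T - \lambda_1 \pi_1$. For $x \in \mathcal{C}^*$, the pair $(T^n x,\, \lambda_1^n \ell_1(x) v_1)$ has $\delta_\mathcal{C}$-distance at most $\eta^{n-1}\Delta$ by the contraction principle, and the cone-to-norm lemma (applied after normalizing by $m$) upgrades this to $\|Q^n x\| \leq C' |\lambda_1|^n \eta^n \|x\|$; the reproducing property then extends the bound to all of $E$ with an extra factor $A$, so $r(Q) \leq \eta |\lambda_1| < |\lambda_1|$, which is exactly the spectral gap. I expect the principal obstacle to be the cone-to-norm lemma: everything else — construction of $v_1$, of $\ell_1$, extension via reproducing, and the remainder estimate — is driven by this single geometric comparison between the intrinsic gauge $\delta_\mathcal{C}$ and the ambient norm, and it is exactly there that bounded sectional aperture is indispensable.
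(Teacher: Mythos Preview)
The paper does not provide a proof of Theorem~\ref{gap-1}: it is stated in Section~\ref{preliminaries} as a known preliminary result quoted from \cite{Rug10} and \cite{Dub09}. Your proposal is correct and is essentially the argument given in those references; the cone-to-norm lemma you isolate as the crux is precisely Lemma~3.4 of \cite{Rug10} (which reappears in this paper as Lemma~\ref{aperture-1} specialized to $p=1$), and the rest of your outline --- iterative construction of the eigenvector, definition and extension of the linear functional via the reproducing hypothesis, and the remainder estimate --- matches the template the paper itself follows in Section~\ref{spectral gap} when generalizing the result to $p$-cones (Proposition~\ref{fixed-space} and the theorem following Definition~\ref{reproducing}).
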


\section{Multidimensional cones and contraction principle}
\label{definitions}

In \cite{Rug10}, H.H. Rugh defines a notion of proper complex cone which can be seen as an union of $\mathbb{C}$-lines which doesn't contain any $2$-dimensional subspace. We will first extend this definition to create higher dimensional cones. Then we will use the gauge/metric defined in \cite{Rug10} and \cite{Dub09} to define a "distance" on our cones and get a contraction principle similar to the one for proper complex cones.

In this section $E$ is a complex Banach space and $p$ is a positive integer. For a subset $X \subset E$, we denote $X^*=X \setminus \{0\}$.

\begin{defi}
We say that a subset $\mathcal{C} \subset E$ is a closed $p$-dimensional cone if $\mathcal{C}$ is a union of $p$-dimensional subspaces of $E$, is closed, and $\mathcal{C} \neq \emptyset$.

We say that a closed $p$ dimensional cone $\mathcal{C}$ is proper if it doesn't contains any $(p+1)$-dimensional subspace of $E$.   We will refer to proper closed $p$ dimensional cones as \emph{$p$-cones}.
\end{defi}

\begin{rem}
A $p$-cone is always $\mathbb{C}$-invariant. For $p=1$ we get the usual definition of proper closed complex cone.
\end{rem}

\begin{ex}
Let $F$ be a $p$-dimensional subspace of $E$, and $G$ a closed supplement of $F$ in $E$. Let $\pi:E \rightarrow F$ be the projection on $F$ parallel to $G$. Then for all $a>0$, the set $$\mathcal{C}_{\pi,a} = \{ x \in E ~ |~ \|(Id - \pi)(x)\| \leq a \|\pi(x)\| \}$$ is a $p$-cone of $E$.
\end{ex}

We now define a "distance" on the $p$-dimensional spaces of a $p$-cone, using the gauge defined on $1$-dimensional complex cones in \cite{Rug10} or in \cite{Dub09}. If $\mathscr{C}$ is a classic complex cone, we denote $\delta_\mathscr{C}$ this gauge on $\mathscr{C}$.

\begin{defi}
Let $\mathcal{C} \subset E$ be a $p$-cone and $x,y \in \mathcal{C}^*$. We define the 1-dimensional "distance" $d_{1,\mathcal{C}}(x,y)$ as $d_{1,\mathcal{C}}(x,y) = 0$ if Span($x$,$y$)$\subset \mathcal{C}$, and $d_{1,\mathcal{C}}(x,y) = \delta_{\mathcal{C} \cap Span(x,y)}(x,y)$ else (in this last case $\mathcal{C} \cap Span(x,y)$ is a proper complex cone). \\
Let $\mathcal{C}$ be a $p$-cone and $V,W \subset \mathcal{C}$ be $p$-dimensional subspaces of $E$. We define the "cone distance" between $V$ and $W$ as :
$$d_{\mathcal{C}}(V,W)= \underset{(x,y) \in (V^*,W^*)}{\sup} d_{1,\mathcal{C}}(x,y).$$
If  $\mathcal{C}'$ is a $p$-cone contained in $\mathcal{C}$, the diameter of $\mathcal{C}'$ in $\mathcal{C}$ is $\textnormal{diam}_{\mathcal{C}}(\mathcal{C}')=\sup \{ d_{\mathcal{C}}(V,W)~ |~ V,W \subset \mathcal{C}' ,p-\textnormal{dimensional} \}.$
\end{defi}

\begin{rem}
If Span($x$,$y$) $\nsubseteq \mathcal{C}$, then $d_{1,\mathcal{C}}(x,y) > 0$. \\
Let $V \neq W$ be $p$ dimensional spaces contained in $\mathcal{C}$. As $\mathcal{C}$ is proper, $V + W \nsubseteq \mathcal{C}$, so there exist $x \in V$, $y \in W$ such that Span($x$,$y$) $\nsubseteq \mathcal{C}$. This implies $d_{\mathcal{C}}(V,W) > 0$.
\end{rem}

\begin{ex}
Let $0<a<b$. Defining our "distance" with respect to the gauge for \cite{Dub09}, we have $\mathcal{C}_{\pi,a} \subset \mathcal{C}_{\pi,b}$ and $\textnormal{diam}_{\mathcal{C}_{\pi,b}}(\mathcal{C}_{\pi,a}) \leq 2 \log \left(\frac{b+a}{b-a}\right)$.
\end{ex}

\begin{proof}
Let $x,y \in \mathcal{C}_{\pi,a}$ with decompositions $x=x_F+x_G$ and $y=y_F+y_G$ with respect to $E=F \oplus G$. To estimate $\delta_{\mathcal{C}_{\pi,b}}(x,y)$ we must find an upper and lower bound for the set $E_{\mathcal{C}_{\pi,b}}(x,y)=\{ z \in \mathbb{c} ~ |~ zx-y \notin \mathcal{C}_{\pi,b} \}$. As the gauge is projective, we can assume $\|x_F\|=\|y_F\|=1$, and therefore we have $\|x_G	\|,\|y_G\| \leq a$. 

Let $z \in \mathbb{C}$ with $|z| \geq \frac{b+a}{b-a}$. Then 
$$\| z.x_G - y_G \| \leq |z|a+a = \frac{(b+a)a+(b-a)a}{b-a} = \frac{2ba}{b-a} = b ( \frac{b+a}{b-a} -1) \leq b (|z|-1) \leq b \| z.x_F - y_F \|,$$
so $zx-y \in \mathcal{C}_{\pi,b}$. Similarly, if $|z| \leq \frac{b-a}{b+a}$, then
$$\| z.x_G - y_G \| \leq |z|a+a = \frac{(b-a)a+(b+a)a}{b+a} = \frac{2ba}{b+a} = b (1- \frac{b-a}{b+a}) \leq b (1-|z|) \leq b \| z.x_F - y_F \|,$$
so $zx-y \in \mathcal{C}_{\pi,b}$. This gives $\inf |E_{\mathcal{C}_{\pi,b}}(x,y)| \geq \frac{b-a}{b+a}$ and $\sup |E_{\mathcal{C}_{\pi,b}}(x,y)| \leq \frac{b+a}{b-a}$, hence $$\delta_{\mathcal{C}_{\pi,b}}(x,y) \leq \log \left((\frac{b+a}{b-a})^2\right) = 2 \log \left(\frac{b+a}{b-a}\right).$$
\end{proof}

We can now formulate the following contraction principle:

\begin{prop}
\label{contraction}
Let $E_1, E_2$ be complex Banach spaces. Let  $T : E_1 \rightarrow E_2$ a continuous linear map and $\mathcal{C}_1 \subset E_1$,  $\mathcal{C}_2 \subset E_2$ two $p$-cones such that $T(\mathcal{C}_1^*) \subset \mathcal{C}_2^*$. Then for every $p$-dimensional spaces $V,W$ contained in $\mathcal{C}_1$, we have $d_{\mathcal{C}_2}(TV,TW) \leq d_{\mathcal{C}_1}(V,W).$

Moreover, if $\Delta = \textnormal{diam}_{\mathcal{C}_2}(T \mathcal{C}_1) < \infty$, there exists $\eta <1$ depending only on $\Delta$ such that for all $p$-dimensional spaces $V,W$ contained in $\mathcal{C}_1$, we have $$d_{\mathcal{C}_2}(TV,TW) \leq \eta .d_{\mathcal{C}_1}(V,W).$$
\end{prop}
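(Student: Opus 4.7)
The plan is to reduce the statement to the one-dimensional contraction principle (Proposition \ref{contraction-1}) by slicing the situation with two-dimensional sections. Since $T(\mathcal{C}_1^*)\subset\mathcal{C}_2^*$ and $V,W\subset\mathcal{C}_1$, the restrictions $T|_V,T|_W$ are injective, so $TV,TW$ are genuine $p$-dimensional subspaces of $\mathcal{C}_2$, and every element of $(TV)^*$ (respectively $(TW)^*$) is uniquely $Tx$ with $x\in V^*$ (resp.\ $Ty$, $y\in W^*$). It therefore suffices to prove the pointwise inequality $d_{1,\mathcal{C}_2}(Tx,Ty)\le d_{1,\mathcal{C}_1}(x,y)$, and for the second part $d_{1,\mathcal{C}_2}(Tx,Ty)\le\eta\, d_{1,\mathcal{C}_1}(x,y)$, for every such pair, and then take the supremum.

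For the non-expansion, fix $x\in V^*$, $y\in W^*$. If $\Span(x,y)\subset\mathcal{C}_1$, then $\Span(Tx,Ty)=T(\Span(x,y))\subset\mathcal{C}_2$, and both one-dimensional distances vanish by definition. Otherwise set $F=\Span(x,y)$, so $\mathscr{C}_1:=\mathcal{C}_1\cap F$ is a proper closed complex cone in $F$ (in the sense of Section \ref{preliminaries}) containing $x$ and $y$. If $Tx,Ty$ are colinear, or if $G:=\Span(Tx,Ty)\subset\mathcal{C}_2$, then $d_{1,\mathcal{C}_2}(Tx,Ty)=0$ and we are done; otherwise $\mathscr{C}_2:=\mathcal{C}_2\cap G$ is likewise a proper closed complex cone in $G$. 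The restriction $T|_F:F\to G$ is continuous linear and sends $\mathscr{C}_1^*$ into $\mathscr{C}_2^*$, so Proposition \ref{contraction-1} applied to this one-dimensional pair yields
$$d_{1,\mathcal{C}_2}(Tx,Ty)=\delta_{\mathscr{C}_2}(Tx,Ty)\le\delta_{\mathscr{C}_1}(x,y)=d_{1,\mathcal{C}_1}(x,y).$$

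For the strict contraction, assume $\Delta=\textnormal{diam}_{\mathcal{C}_2}(T\mathcal{C}_1)<\infty$ and place oneself in the non-degenerate planar case above. The second half of Proposition \ref{contraction-1} applied to $T|_F:F\to G$ produces a factor $\eta<1$ depending only on $\textnormal{diam}_{\mathscr{C}_2}(T\mathscr{C}_1)$, so uniformity of $\eta$ in $(x,y,V,W)$ reduces to bounding this planar diameter by $\Delta$. Given $z_1,z_2\in\mathscr{C}_1^*$, the fact that $\mathcal{C}_1$ is a union of $p$-dimensional subspaces produces $p$-dimensional $V_1,V_2\subset\mathcal{C}_1$ with $z_i\in V_i$; then $TV_1,TV_2\subset T\mathcal{C}_1$ are $p$-dimensional and
$$d_{1,\mathcal{C}_2}(Tz_1,Tz_2)\le d_{\mathcal{C}_2}(TV_1,TV_2)\le\Delta.$$
Whenever $Tz_1,Tz_2$ are independent they span the $2$-plane $G$, so $\delta_{\mathscr{C}_2}(Tz_1,Tz_2)=d_{1,\mathcal{C}_2}(Tz_1,Tz_2)\le\Delta$; the colinear case is trivial. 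Hence $\textnormal{diam}_{\mathscr{C}_2}(T\mathscr{C}_1)\le\Delta$ and $\eta=\eta(\Delta)<1$ works uniformly. Passing to the supremum over $(x,y)$ gives $d_{\mathcal{C}_2}(TV,TW)\le\eta\, d_{\mathcal{C}_1}(V,W)$.

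The only genuinely delicate point is the uniform planar diameter bound: the hypothesis controls a supremum over $p$-dimensional subspaces in $T\mathcal{C}_1$, whereas Proposition \ref{contraction-1} needs a bound on the planar gauge $\delta_{\mathscr{C}_2}$. Reconciling the two rests on the twin observations used above — that every nonzero vector of $\mathcal{C}_1$ lies in some $p$-dimensional subspace of $\mathcal{C}_1$, and that any two independent vectors of the $2$-plane $G$ already span $G$, so the ambient and planar one-dimensional distances agree on $G$.
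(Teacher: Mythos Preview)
Your proof is correct and follows essentially the same route as the paper: reduce to two-dimensional sections and apply Proposition~\ref{contraction-1}, then for the strict contraction bound the planar diameter $\textnormal{diam}_{\mathscr{C}_2}(T\mathscr{C}_1)$ by $\Delta$ via the observation that every $z\in\mathcal{C}_1^*$ lies in some $p$-dimensional subspace of $\mathcal{C}_1$ and that any two independent vectors in $G$ span it. The paper's argument is identical in substance, including the treatment of degenerate cases and the ``delicate point'' you single out at the end.
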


\begin{proof}
The idea is to use the result known for the 1-dimensional cones. Given the definition of our "distance", the only points that matters are the ones for which Span($x$,$y$)$\nsubseteq \mathcal{C}$, and for these points Span($x$,$y$)$\cap \mathcal{C}$ is a proper closed 1-dimensional cone, allowing us to use the results of \cite{Rug10} and \cite{Dub09}. When Span($x$,$y$)$\nsubseteq \mathcal{C}$, we will denote $\mathcal{C}(x,y)=\textnormal{Span}(x,y)\cap \mathcal{C}$.

Let $V,W$ be $p$-dimensional subspaces of $E_1$ contained in $\mathcal{C}_1$. Let $x \in V^*$, $y \in W^*$. If Span($x$,$y$) $\subset \mathcal{C}_1$, then Span($Tx$,$Ty$) $=T$Span($x$,$y$) $\subset T \mathcal{C}_1 \subset \mathcal{C}_2$. As $T(\mathcal{C}_1^*) \subset \mathcal{C}_2^*$, we have $(TV)^*=T(V^*)$, $(TW)^*=T(W^*)$, and $TV$ and $TW$ are $p$-dimensional subspaces of $E_2$. The definition of $d_{\mathcal{C}_2}$ gives us:

\[ 
\begin{array}{ll}
d_{\mathcal{C}_2}(TV,TW) &= \sup \{ d_{1,\mathcal{C}_2}(x,y) ~ |~ (x,y) \in (TV^*,TW^*) \} \\
&= \sup \{ d_{1,\mathcal{C}_2}(Tx,Ty) ~ |~ (x,y) \in (V^*,W^*) \}\\
&= \sup \{ d_{1,\mathcal{C}_2}(Tx,Ty) ~ |~ (x,y) \in (V^*,W^*), \textnormal{Span($Tx$,$Ty$) $\nsubseteq \mathcal{C}_2$} \} \\
&= \sup \{ \delta_{\mathcal{C}_2(Tx,Ty)}(Tx,Ty) ~ |~ (x,y) \in (V^*,W^*), \textnormal{Span($Tx$,$Ty$) $\nsubseteq \mathcal{C}_2$} \}. \\

\end{array}
\]

For all $(x,y) \in (V^*,W^*)$ such that Span($Tx$,$Ty$) $\nsubseteq \mathcal{C}_2$, the map $T_{|\textnormal{Span($x$,$y$)}} : \textnormal{Span($x$,$y$)} \rightarrow \textnormal{Span($Tx$,$Ty$)}$ satisfies $T(\mathcal{C}_1(x,y)^*) \subset \mathcal{C}_2(Tx,Ty)^*$. Using Proposition \ref{contraction-1}, we get $\delta_{\mathcal{C}_2(Tx,Ty)}(Tx,Ty) \leq \delta_{\mathcal{C}_1(x,y)}(x,y)$. Hence
\[ 
\begin{array}{ll}
d_{\mathcal{C}_2}(TV,TW) &\leq \sup \{ \delta_{\mathcal{C}_1 (x,y)}(x,y) ~ |~ (x,y) \in (V^*,W^*), \textnormal{Span}(Tx,Ty) \nsubseteq \mathcal{C}_2\} \\
&\leq \sup \{ \delta_{\mathcal{C}_1(x,y)}(x,y) ~ |~ (x,y) \in (V^*,W^*), \textnormal{Span($x$,$y$) $\nsubseteq \mathcal{C}_1$} \} \\
&\leq \sup \{ d_{1,\mathcal{C}_1}(x,y) ~ |~ (x,y) \in (V^*,W^*), \textnormal{Span($x$,$y$) $\nsubseteq \mathcal{C}_1$} \} \\
&\leq \sup \{ d_{1,\mathcal{C}_1}(x,y) ~ |~ (x,y) \in (V^*,W^*) \} \\
&\leq d_{\mathcal{C}_1}(V,W).
\end{array}
\]

 For the second part of the statement, we show that $\textnormal{diam}_{\mathcal{C}_2(Tx,Ty)} (T(\mathcal{C}_1 (x,y))) \leq \Delta$ where the diameter is taken with respect to the gauge $\delta_{\mathcal{C}_2 (Tx,Ty)}$ on classic complex cones.\\
  Let $u,v \in \mathcal{C}_1(x,y)$. If $u$ and $v$ are colinear, $\delta_{\mathcal{C}_2 (Tx,Ty)}(Tu,Tv)=0$. Else, Span($u$,$v$) $=$ Span($x$,$y$), hence $\delta_{\mathcal{C}_2 (Tx,Ty)}(Tu,Tv) = \delta_{\mathcal{C}_2(Tu,Tv) }(Tu,Tv) \leq d_{1,\mathcal{C}_2}(Tu,Tv) $. As $Tu,Tv \in \mathcal{C}_1$, there exists $p$-dimensional subspaces $Z_u$ et $Z_v$ such that $u \in Z_u \subset \mathcal{C}_1$ (and similarly for $v$). Then $TZ_u \subset \mathcal{C}_2$ (and similarly for $v$) and $\delta_{\mathcal{C}_2(Tx,Ty)}(Tu,Tv) \leq d_{\mathcal{C}_2}(TZ_u,TZ_v) \leq \Delta$.
  
Using once again Lemma 2.4 in \cite{Rug10} or \cite{Dub09}, we get : for all $(x,y) \in (V^*,W^*)$ such that Span($Tx$,$Ty$) $\nsubseteq \mathcal{C}_2$, $\delta_{\mathcal{C}_2 (Tx,Ty)}(Tx,Ty) \leq \eta.\delta_{\mathcal{C}_1(x,y)}(x,y)$, and the same calculation as before gives the desired result.

\end{proof}

\begin{ex}
For $0<a<b$, if $T \in \mathcal{L}(E)$ satisfies $T(\mathcal{C}_{\pi,b}^*) \subset \mathcal{C}_{\pi,a}^*$, then $T$ satisfies the assumptions of Proposition \ref{contraction} and we have (in the case we use the metric from \cite{Dub09}):
For all $p$-dimensional spaces $V,W$ contained in $\mathcal{C}_{\pi,b}$, $$d_{\mathcal{C}_{\pi,b}}(TV,TW) \leq \frac{a}{b} .d_{\mathcal{C}_{\pi,b}}(V,W),$$
i.e. $T$ is a contraction on the set of $p$-dimensional spaces of  $\mathcal{C}_{\pi,b}$ with respect to the "distance" $d_{\mathcal{C}_{\pi,b}}$.
\end{ex}

\section{Metrics on the Grassmannian}
\label{metrics}

In this section, we define metrics on the space $\mathcal{G}(E)$ of closed subspaces of $E$. First a Hausdorff metric, with interesting topological properties, and then an "exterior algebra" metric defined by seeing a $p$-dimensional subspace as a vector line in $\bigwedge\nolimits^pE$. We show that these metrics are equivalent and obtain some interesting properties.

\begin{defi}
We define on $\mathcal{G}(E)$ the metric $d_H$ given by the Hausdorff distance between the unit spheres :
$$\forall V,W \in \mathcal{G}(E), d_H(V,W)=d_H(S_V,S_W)=\max \left( \sup\limits_{x \in S_V} d(x,S_W),\sup\limits_{y \in S_W} d(y,S_V)\right).$$
\end{defi}

The metric space $\mathcal{G}(E),d_H)$ satisfies the following properties:

\begin{prop}
\begin{enumerate}[(i)]
\item $\mathcal{G}(E),d_H)$ is complete.
\item The subset $\mathcal{G}_p(E)$ of all $p$-dimensional subspaces of $E$ is closed in $\mathcal{G}(E),d_H)$.
\item Defining $\delta(V,W)=\max \left( \sup\limits_{x \in S_V} d(x,W),\sup\limits_{y \in S_W} d(y,V)\right)$, we have:
$$\forall V,W \in \mathcal{G}(E), \delta(V,W) \leq d_H(V,W) \leq 2 \delta(V,W).$$
In particular, the metrics $d_H$ and $\delta$ are equivalent on $\mathcal{G}(E)$.
\end{enumerate}
\end{prop}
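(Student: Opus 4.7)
My plan is to treat the three items in the order (iii), (i), (ii), since (iii) provides a useful comparison, (i) is essentially a construction, and (ii) is the most delicate point.

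For (iii), the inequality $\delta(V,W)\le d_H(V,W)$ is immediate from $S_W\subset W$. For the reverse bound, I would fix $x\in S_V$ and $\varepsilon>0$, choose $w\in W$ with $\|x-w\|\le d(x,W)+\varepsilon$, and set $\bar w=w/\|w\|$ when $w\ne 0$. Then $\bar w\in S_W$ and the triangle inequality combined with $|\,\|w\|-\|x\|\,|\le \|w-x\|$ yields $\|x-\bar w\|\le 2\|x-w\|$. The degenerate case $w=0$ forces $d(x,W)\ge 1$ and is covered by $\|x-y\|\le 2$ for any $y\in S_W$. Taking suprema delivers $d_H(V,W)\le 2\delta(V,W)$.

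For (i), given a $d_H$-Cauchy sequence $(V_n)$, I would observe that $(S_{V_n})$ is a Cauchy sequence in the hyperspace of closed subsets of the unit ball of $E$, which is complete under Hausdorff distance; let $S$ denote the limit (non-empty, closed). The natural candidate is $V=\{0\}\cup\{\lambda x\mid \lambda\in\mathbb{C}\setminus\{0\},\ x\in S\}$. I would then check in turn: (a) $V$ is $\mathbb{C}$-linear, using that $e^{i\theta}S_{V_n}=S_{V_n}$ passes to the Hausdorff limit for scalar invariance, and that for $u,v\in V\setminus\{0\}$ one can choose $u_n\to u/\|u\|$ and $v_n\to v/\|v\|$ in $S_{V_n}$ with $\|u\|u_n+\|v\|v_n\in V_n$ tending to $u+v$, then renormalize; (b) $V$ is closed, which follows from closedness of $S$ and the renormalization trick from (iii); (c) $S_V=S$, yielding $d_H(V_n,V)=d_H(S_{V_n},S)\to 0$.

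The step I expect to be the main obstacle is (ii), because linear independence must be controlled quantitatively in a general Banach space without orthogonal projections. The plan is to substitute orthogonality by the Auerbach basis theorem together with Hahn-Banach extension. If $\dim V\ge p+1$, I would pick an Auerbach basis $(e_1,\dots,e_{p+1})$ of a $(p+1)$-dimensional subspace of $V$, with dual forms $\ell_i$ of norm one, extend each $\ell_i$ by Hahn-Banach to $\tilde\ell_i\in E^*$ of norm one, and approximate each $e_i$ by $e_i^{(n)}\in S_{V_n}$; for $n$ large the matrix $\bigl(\tilde\ell_i(e_j^{(n)})\bigr)$ is within $1/2$ of the identity, so the $e_i^{(n)}$ are linearly independent in the $p$-dimensional $V_n$, contradiction. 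Running the symmetric argument from an Auerbach basis of $V_n$ and approximating in $S_V$ produces $p$ linearly independent vectors of $V$, yielding $\dim V\ge p$ and hence equality.
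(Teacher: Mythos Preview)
The paper states this proposition without proof, treating the three properties as standard facts about the Hausdorff metric on the Grassmannian. Your argument is correct and supplies the missing details cleanly.

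A few minor remarks. In (i), you should note that $\mathcal{G}(E)$ implicitly excludes $\{0\}$ (otherwise $S_{\{0\}}=\varnothing$ and $d_H$ is undefined), so the $S_{V_n}$ are non-empty closed bounded sets and the completeness of the hyperspace applies; the limit $S$ is then automatically non-empty. Your verification that $V$ is a closed linear subspace with $S_V=S$ is fine; the only point to make explicit is that $S$ inherits $\mathbb{S}^1$-invariance from the $S_{V_n}$, which you do mention.

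For (ii), your use of Auerbach bases is a natural and efficient choice. It is worth noting that the paper, for its later arguments, introduces instead the notion of a \emph{right decomposition} (Definition~4.8 and Lemma~\ref{base-norme}): a basis $(x_1,\dots,x_p)$ with $\|x_i\|=1$ and norming functionals $l_i$ satisfying $l_i(x_j)=0$ for $j>i$, so that $(\langle l_i,x_j\rangle)$ is lower triangular with ones on the diagonal. This is a one-sided variant of the Auerbach condition, built by a greedy Hahn--Banach procedure. Either tool would serve here; your Auerbach argument gives a slightly cleaner perturbation estimate (the matrix is close to the identity rather than to a unipotent triangular matrix), while the paper's right decompositions are tailored to the later exterior-algebra estimates where the triangular structure is what matters.
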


Before defining an "exterior algebra" metric on $\mathcal{G}_p(E)$, we must define a norm on $\bigwedge\nolimits^p E$. Most of the proofs relies on computations on $\bigwedge\nolimits^p E$. Although they are not particularly difficult, they aren't particularly interesting for a first reading. Those proofs are compiled in the appendix.

\begin{defi}
\label{defi1}
If $l_1,\cdots,l_p \in E'$ and $x_1,\cdots,x_p \in E$, we define $$\langle l_1 \wedge \cdots \wedge l_p ,x_1 \wedge \cdots \wedge x_p \rangle = \det ((\langle l_i, x_j\rangle)_{i,j})$$
which can be extended to a linear form $l_1 \wedge \cdots \wedge l_p$ on $\bigwedge\nolimits^p E$. We define a norm $||.||_{\wedge 1, p}$ on $\bigwedge\nolimits^p E$ by

\[ ||x||_{\wedge1, p} = \underset{\begin{array}{c}l_1,\cdots,l_p \in E' \\ ||l_1||=\cdots =||l_p||=1 \end{array}}{\sup} \left| \langle l_1 \wedge \cdots \wedge l_p ,x \rangle \right| \]
\end{defi}

\begin{defi}
\label{defi2}
We define a norm $||.||_{\wedge 2, p}$ on $\bigwedge\nolimits^p E$ by
\item \[ \|x\|_{\wedge2,p} = \inf \underset{i}{\sum} \|x^i_1\| \cdots \|x^i_p \| \]
where the infimum is taken over all decompositions $x = \underset{i}{\sum} x^i_1 \wedge \cdots \wedge x^i_p $.
\end{defi}

\begin{prop}
\label{prop1}
On $\bigwedge\nolimits^p E$, $\|.\|_{\wedge1,p} \leq (\sqrt{p})^p \|.\|_{\wedge2,p}$.
\end{prop}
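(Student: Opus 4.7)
The plan is to reduce the inequality to the case of simple tensors via subadditivity, and then apply Hadamard's inequality to obtain the $(\sqrt{p})^p$ factor.

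First, I would observe that $\|\cdot\|_{\wedge 1,p}$ is subadditive: it is the supremum over a family of linear functionals $x \mapsto \langle l_1 \wedge \cdots \wedge l_p, x\rangle$, and the absolute value of each such functional is trivially subadditive. Combined with absolute homogeneity, this makes $\|\cdot\|_{\wedge 1,p}$ a seminorm. Since the right-hand side of the desired inequality is an infimum over decompositions $x = \sum_i x_1^i \wedge \cdots \wedge x_p^i$, it suffices by subadditivity to prove the inequality on simple tensors, namely
\[
\|x_1 \wedge \cdots \wedge x_p\|_{\wedge 1,p} \leq (\sqrt{p})^p \, \|x_1\| \cdots \|x_p\|
\]
for any $x_1, \ldots, x_p \in E$; summing over any decomposition of $x$ and then taking the infimum then yields the proposition.

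For the simple-tensor case, I would unfold the definition: for $l_1, \ldots, l_p \in E'$ with $\|l_i\|=1$,
\[
\langle l_1 \wedge \cdots \wedge l_p, x_1 \wedge \cdots \wedge x_p\rangle = \det\bigl((\langle l_i, x_j\rangle)_{i,j}\bigr).
\]
Now I would apply Hadamard's inequality to this $p \times p$ matrix, bounding the modulus of its determinant by the product of the Euclidean norms of its columns:
\[
\bigl|\det(\langle l_i, x_j\rangle)\bigr| \leq \prod_{j=1}^p \Bigl(\sum_{i=1}^p |\langle l_i, x_j\rangle|^2\Bigr)^{1/2}.
\]
Since $\|l_i\|=1$, each entry satisfies $|\langle l_i, x_j\rangle| \leq \|x_j\|$, hence each column has Euclidean norm at most $\sqrt{p}\,\|x_j\|$. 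Taking the supremum over all admissible $(l_i)$ gives the claim.

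The only genuine technical input is Hadamard's inequality, and the factor $(\sqrt{p})^p$ arises precisely from converting the $\ell^\infty$ bound $|\langle l_i, x_j\rangle| \leq \|x_j\|$ into the $\ell^2$ bound required by Hadamard. I do not foresee a real obstacle: both subadditivity of $\|\cdot\|_{\wedge 1,p}$ and the expansion of the pairing against a simple tensor are immediate from Definition \ref{defi1}, and Hadamard's inequality is classical.
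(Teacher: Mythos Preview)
Your proposal is correct and follows essentially the same approach as the paper: both reduce to simple tensors via linearity/subadditivity and then bound $|\det(\langle l_i,x_j\rangle)|$ by $(\sqrt{p})^p\|x_1\|\cdots\|x_p\|$, which is precisely Hadamard's inequality combined with the entrywise bound $|\langle l_i,x_j\rangle|\le\|x_j\|$. The paper does not name Hadamard's inequality explicitly, but the key estimate is identical.
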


Let's now look at some properties of these norms.

\begin{lemma} 
\label{p->p+1}
Let $x \in E$ and $u \in \bigwedge\nolimits^pE$. Then
\begin{enumerate}[(i)]
\item $|| x \wedge u ||_{\wedge1,p+1} \leq (p+1) ||x||.||u||_{\wedge1,p}$
\item $|| x \wedge u ||_{\wedge2,p+1} \leq ||x||.||u||_{\wedge2,p}$
\end{enumerate}
\end{lemma}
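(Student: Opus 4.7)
The plan is to prove the two inequalities separately. Inequality (ii) follows almost directly from the definition of $\|\cdot\|_{\wedge2,p}$, while (i) relies on a Laplace (cofactor) expansion of the determinant pairing introduced in Definition \ref{defi1}.

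For (ii), I would start from an arbitrary decomposition $u = \sum_i x^i_1 \wedge \cdots \wedge x^i_p$ of $u$ as a sum of simple $p$-vectors. Wedging on the left with $x$ gives
$$x \wedge u = \sum_i x \wedge x^i_1 \wedge \cdots \wedge x^i_p,$$
which is a decomposition of $x \wedge u$ into simple $(p+1)$-vectors. By the very definition of $\|\cdot\|_{\wedge2,p+1}$,
$$\|x \wedge u\|_{\wedge2,p+1} \leq \sum_i \|x\|\cdot \|x^i_1\|\cdots \|x^i_p\| = \|x\| \sum_i \|x^i_1\|\cdots \|x^i_p\|.$$
Taking the infimum over all decompositions of $u$ yields (ii).

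For (i), fix unit forms $l_1,\dots,l_{p+1} \in E'$. When $u = x_1 \wedge \cdots \wedge x_p$ is simple, $\langle l_1 \wedge \cdots \wedge l_{p+1}, x \wedge x_1 \wedge \cdots \wedge x_p\rangle$ is the determinant of the $(p+1)\times(p+1)$ matrix whose first column is $(\langle l_i,x\rangle)_i$ and whose remaining columns are $(\langle l_i, x_j\rangle)_{i,j}$. Expanding along the first column gives
$$\sum_{i=1}^{p+1} (-1)^{i+1} \langle l_i, x\rangle \langle l_1 \wedge \cdots \widehat{l_i} \cdots \wedge l_{p+1},\ x_1 \wedge \cdots \wedge x_p\rangle,$$
and since both sides of this identity are linear in the second slot, the formula extends by linearity to arbitrary $u \in \bigwedge^p E$:
$$\langle l_1 \wedge \cdots \wedge l_{p+1},\ x \wedge u\rangle = \sum_{i=1}^{p+1} (-1)^{i+1} \langle l_i, x\rangle \langle l_1 \wedge \cdots \widehat{l_i} \cdots \wedge l_{p+1},\ u\rangle.$$

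I would then bound each term using $|\langle l_i,x\rangle| \leq \|l_i\|\cdot \|x\| = \|x\|$, and, since the $p$ remaining forms $l_1,\dots,\widehat{l_i},\dots,l_{p+1}$ all still have unit norm, $|\langle l_1 \wedge \cdots \widehat{l_i} \cdots \wedge l_{p+1}, u\rangle| \leq \|u\|_{\wedge1,p}$ by definition of $\|\cdot\|_{\wedge1,p}$. Summing over $i$ gives
$$|\langle l_1 \wedge \cdots \wedge l_{p+1},\ x \wedge u\rangle| \leq (p+1)\,\|x\|\cdot \|u\|_{\wedge1,p},$$
and taking the supremum over unit $l_1,\dots,l_{p+1}$ yields (i).

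No step is particularly delicate; the only point deserving care is the cofactor expansion and the verification that both sides remain linear in $u$, so that the identity valid on simple $p$-vectors passes to the full space $\bigwedge^p E$. The factor $p+1$ in (i), which is absent from (ii), is exactly the number of terms in this expansion, which suggests the bound is sharp in general and cannot be improved without additional structure on the norm.
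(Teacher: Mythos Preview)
Your proof is correct and follows essentially the same approach as the paper: for (ii) you pass a decomposition of $u$ through the wedge with $x$ and take the infimum, and for (i) you use the Laplace (cofactor) expansion of $\langle l_1\wedge\cdots\wedge l_{p+1},\,x\wedge u\rangle$ along the column corresponding to $x$, extend by linearity to general $u$, and bound term by term. The paper's proof is identical in substance, including the observation that linearity in $u$ lets the identity on simple tensors propagate to all of $\bigwedge^pE$.
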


\begin{lemma}
\label{norme-difference}
Let $x_1, \cdots , x_p , y_1 ,\cdots , y_p \in E$ such that $\forall i, ||x_i|| = ||y_i|| = 1$. Then 
\begin{enumerate}[(i)]
\item $||x_1 \wedge \cdots \wedge x_p - y_1 \wedge \cdots \wedge y_p ||_{\wedge1,p} \leq p.p!.\max (||x_i - y_i||)$,
\item $||x_1 \wedge \cdots \wedge x_p - y_1 \wedge \cdots \wedge y_p ||_{\wedge2,p} \leq p.\max (||x_i - y_i||)$.
\end{enumerate}
\end{lemma}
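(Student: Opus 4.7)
The plan is to reduce both inequalities to multilinear bounds on a single wedge product, via a standard telescoping identity; then apply Lemma \ref{p->p+1} iteratively.

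The first step is to establish the telescoping decomposition
\[
x_1 \wedge \cdots \wedge x_p - y_1 \wedge \cdots \wedge y_p = \sum_{k=1}^{p} y_1 \wedge \cdots \wedge y_{k-1} \wedge (x_k - y_k) \wedge x_{k+1} \wedge \cdots \wedge x_p.
\]
This is verified by a direct cancellation: expanding the $k$-th summand by multilinearity gives the difference $y_1 \wedge \cdots \wedge y_{k-1} \wedge x_k \wedge x_{k+1} \wedge \cdots \wedge x_p - y_1 \wedge \cdots \wedge y_k \wedge x_{k+1} \wedge \cdots \wedge x_p$, and consecutive terms cancel pairwise, leaving only the endpoints.

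For (ii), iterating Lemma \ref{p->p+1}(ii) along with the observation that $\|z\|_{\wedge 2, 1} = \|z\|$ (the trivial decomposition is optimal, by the triangle inequality) gives the multilinear bound $\|z_1 \wedge \cdots \wedge z_p\|_{\wedge 2, p} \leq \|z_1\| \cdots \|z_p\|$. Applied to the $k$-th summand above, where $z_k = x_k - y_k$ and the remaining $z_i$ are $y_i$ or $x_i$ of unit norm, this bounds each summand by $\|x_k - y_k\| \leq \max_i \|x_i - y_i\|$. The triangle inequality across the $p$ summands then yields the result.

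For (i), I would similarly iterate Lemma \ref{p->p+1}(i) from the base case $\|z\|_{\wedge 1, 1} = \|z\|$ (immediate from Definition \ref{defi1} combined with Hahn--Banach). Each application multiplies the bound by a factor equal to the current wedge-degree, producing $\|z_1 \wedge \cdots \wedge z_p\|_{\wedge 1, p} \leq p! \, \|z_1\| \cdots \|z_p\|$. So each summand of the telescoping sum has $\|\cdot\|_{\wedge 1, p}$-norm at most $p! \max_i \|x_i - y_i\|$, and summing $p$ such terms gives the announced $p \cdot p!$ prefactor. No serious obstacle is expected; the only small care is in the bookkeeping of the factorial coming from the iterated coefficient $(p+1)$ in Lemma \ref{p->p+1}(i).
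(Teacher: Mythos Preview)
Your proof is correct and is essentially the same as the paper's: both use the telescoping identity together with Lemma~\ref{p->p+1}. The only cosmetic difference is that the paper phrases the telescoping as a two-term recursion $\alpha_p - \beta_p = \alpha_{p-1}\wedge(x_p-y_p) + (\alpha_{p-1}-\beta_{p-1})\wedge y_p$ and then inducts, whereas you unroll this recursion into the explicit $p$-term sum; the resulting bounds are identical.
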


One problem we can have is that $\|x_i\|=1$ but with $\|x_1 \wedge \cdots \wedge x_p\|_{\wedge,p}$ being small. To prevent this from happening, we define a notion which resemble orthogonality.

\begin{defi}
Let $E$ be a normed vector space and $x \in E$. We say that a linear form $l: E \rightarrow \mathds{C}$ is adapted to $x$ if $||l||=1$ and $l(x) = 1$, which is equivalent to $||l||=1$ and $\forall y \in \ker l, ||x + y|| \geq ||x||$. The hyperplane $\ker l$ is said to be adapted to $x$.

Such a linear form always exists (consequence of the Hahn-Banach theorem).
\end{defi}

\begin{rem}
The easiest way to think about this is to see $\ker l$ as a set of vectors "orthogonal" to $x$. The difference with the euclidian case is that given such a $z$ "orthogonal" to $x$, $x$ is not necessarily "orthogonal" to $z$.
\end{rem}


\begin{defi}
Let $E$ be a normed vector space and $V$ be a $p$-dimensional subspace of $E$. We say that $(x_1,\cdots,x_p)$ is a right decomposition of $V$ if: $\forall i,||x_i||=1$ and there exists linear forms $l_1,\cdots,l_p$ such that $\forall i$, $l_i$ is adapted to $x_i$ and $\forall j > i, x_j \in \ker l_i$.
We say that the $l_i$ are linear forms adapted with the decomposition $(x_1,\cdots,x_p)$.
\end{defi}

\begin{rem}
This definition means that $x_2, \cdots, x_p$ are "orthogonal" to $x_1$ ; $x_3, \cdots, x_p$ are "orthogonal" to $x_2$, etc. This would be our equivalent of an orthogonal basis.
\end{rem}

\begin{lemma}
\label{base-norme}
If $(x_1,\cdots,x_p)$ is a right decomposition of $V$ and $(l_1,\cdots,l_p)$ are adapted linear forms, then $(x_1,\cdots,x_p)$ is a basis of $V$ and the $l_i$ are independant (and therefore are forming a basis of $V'$). Moreover, $||x_1 \wedge \cdots \wedge x_p||_{\wedge1,p} \geq 1$.
\end{lemma}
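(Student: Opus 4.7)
The plan is to read everything off the single matrix $M = (l_i(x_j))_{1 \le i,j \le p}$. Unpacking the definition of a right decomposition, I would first observe that $l_i(x_i) = 1$ (since $l_i$ is adapted to $x_i$) and $l_i(x_j) = 0$ for $j > i$ (since $x_j \in \ker l_i$), so $M$ is lower triangular with unit diagonal and $\det M = 1$.

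All three conclusions should then follow from this observation. For the basis statement on $V$, a dependence relation $\sum_j c_j x_j = 0$ yields the linear system $M^T c = 0$, which forces $c = 0$ by invertibility of $M$; since $\dim V = p$, the family $(x_1,\ldots,x_p)$ is a basis. The symmetric argument on the rows of $M$ (or the fact that $M$ and $M^T$ share a rank) shows that $(l_i|_V)$ is linearly independent in $V'$, hence a basis of $V'$, and this immediately lifts to linear independence of the $l_i$ themselves in $E'$ since restriction is a linear map. For the norm inequality, I would simply plug the $l_i$ into the supremum of Definition \ref{defi1}: they are admissible because each satisfies $\|l_i\| = 1$ by the adapted hypothesis, and the resulting pairing $\langle l_1 \wedge \cdots \wedge l_p, x_1 \wedge \cdots \wedge x_p \rangle$ equals $\det M = 1$, so $\|x_1 \wedge \cdots \wedge x_p\|_{\wedge 1, p} \ge 1$.

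There is no substantial obstacle here: the whole design of the notion of right decomposition is precisely to manufacture a triangular pairing matrix with unit diagonal, and the lemma records the three direct consequences of this structure. One may view this as the Banach space analogue of the Gram matrix computation for an orthonormal basis in the Hilbert setting, which is exactly the role it is meant to play in subsequent arguments of the paper.
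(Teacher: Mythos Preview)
Your proposal is correct and follows essentially the same route as the paper: both compute the pairing matrix $(\langle l_i, x_j\rangle)$, observe it is triangular with unit diagonal hence has determinant $1$, and read off the independence of the $x_i$, the independence of the $l_i$, and the lower bound on $\|x_1\wedge\cdots\wedge x_p\|_{\wedge1,p}$ from this single computation. The only cosmetic difference is that the paper infers independence directly from the nonvanishing of the wedge pairing, whereas you spell out the linear-system argument (note your system should read $Mc=0$ rather than $M^Tc=0$, though this is harmless since $M$ is invertible either way).
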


\begin{proof}
\[  \begin{array}{llr}
\langle l_1 \wedge \cdots \wedge l_{p} , x_1 \wedge \cdots \wedge x_p \rangle &= \det (\langle l_i , x_j \rangle) \\
&= \begin{vmatrix} 1 & 0 & \cdots & 0 \\ & 1 & \ddots & \vdots\\ & (*) & \ddots & 0 \\ & & & 1 \end{vmatrix} & \textnormal{as } \forall i, \langle l_i,x_i \rangle =1 \textnormal{ and } \forall j>i, x_j \in \ker l_i\\
&= 1
\end{array}\]

So the $l_i$ are independent, the $x_i$ are independent and $||x_1 \wedge \cdots \wedge x_p||_{\wedge1,p} \geq 1$.
\end{proof}

\begin{rem}
Using the Hahn-Banach theorem, if  $(x_1,\cdots,x_p)$ is a right decomposition of $V$ in $V$,  $(x_1,\cdots,x_p)$ is a right decomposition of $V$ in $E$, i.e. being a right decomposition only depends on the $x_i$.
\end{rem}

\begin{prop} \emph{(completion of a right decomposition).}
Let $E$ be a normed vector space and $V$ be a $p$-dimensional subspace of $E$. Given $x_1, \cdots, x_k \in V$ such that $(x_1, \cdots, x_k)$ is a right decomposition of $Vect(x_1, \cdots, x_k)$, there exists $x_{k+1},\cdots,x_p \in V$ s.t. $(x_1, \cdots, x_p)$ is a right decomposition of $V$. In particular, $V$ has a right decomposition.
\end{prop}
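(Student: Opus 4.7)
My plan is a straightforward induction on the codimension: it suffices to show that if $k<p$ and $(x_1,\dots,x_k)$ is a right decomposition of $W_k=\textnormal{Vect}(x_1,\dots,x_k)$, then we can produce $x_{k+1}\in V$ such that $(x_1,\dots,x_{k+1})$ is still a right decomposition of $W_{k+1}=\textnormal{Vect}(x_1,\dots,x_{k+1})$. Iterating $p-k$ times yields the result, and starting from $k=0$ (the empty decomposition) gives existence of a right decomposition of $V$.

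For the inductive step, let $l_1,\dots,l_k\in E'$ be linear forms adapted to $(x_1,\dots,x_k)$, so that $l_i(x_i)=1$ and $l_i(x_j)=0$ for $j>i$. I consider the subspace
\[ V_k = V \cap \ker l_1 \cap \cdots \cap \ker l_k \subset V. \]
The key linear-algebra point is that the restrictions $l_1|_V,\dots,l_k|_V$ are linearly independent in $V'$. Indeed, if $\sum_{i=1}^k \alpha_i l_i$ vanishes on $V$, then evaluating on $x_1,\dots,x_k\in V$ yields the linear system whose matrix is $(l_i(x_j))_{1\le i,j\le k}$; by the adaptation relations this matrix is lower-triangular with $1$'s on the diagonal, hence invertible, forcing all $\alpha_i=0$. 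Consequently $\dim V_k = p-k \geq 1$.

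Now pick any non-zero $y\in V_k$ and set $x_{k+1}=y/\|y\|\in V$. By construction $x_{k+1}\in\ker l_i$ for every $i\le k$, so the previous adaptation relations are preserved. By the Hahn-Banach theorem there exists $l_{k+1}\in E'$ with $\|l_{k+1}\|=1$ and $l_{k+1}(x_{k+1})=1$, i.e.\ adapted to $x_{k+1}$. The tuple $(l_1,\dots,l_{k+1})$ then witnesses that $(x_1,\dots,x_{k+1})$ is a right decomposition of $W_{k+1}$: each $l_i$ is adapted to $x_i$, and for $j>i$ we have $x_j\in\ker l_i$ (the relations for $i\le k$ come from the previous step together with $l_i(x_{k+1})=0$, and the condition for $i=k+1$ is vacuous at this stage of the induction).

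There is really no hard step here; the only thing one must be careful about is the linear-independence of the $l_i|_V$, which is needed to ensure $V_k\neq\{0\}$ and hence that the next vector $x_{k+1}$ can actually be chosen inside $V$. Everything else — the normalization, the existence of an adapted form at each stage — is immediate from Hahn-Banach and the definitions. The ``in particular'' statement follows by applying the general result to the empty initial family $k=0$, where the hypothesis is vacuous.
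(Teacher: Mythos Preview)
Your proof is correct and follows essentially the same inductive scheme as the paper: at each stage one picks a unit vector in the common kernel of the adapted forms (restricted to $V$) and then invokes Hahn--Banach to produce the next adapted form. The only difference is cosmetic: you justify $\dim V_k=p-k$ by directly arguing that $l_1|_V,\dots,l_k|_V$ are independent via the lower-triangular system, whereas the paper appeals to the earlier lemma that the matrix $(l_i(x_j))$ has determinant $1$; these are the same computation.
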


\begin{proof}
Let $k < p$ and let $l_1,\cdots,l_k$ be linear forms adapted to the decomposition $(x_1, \cdots, x_k)$. As $\dim(\underset{i=1}{\overset{k}{\cap}} \ker l_i) \geq m-k >0$, let's take $x_{k+1} \in \underset{i=1}{\overset{k}{\cap}} \ker l_i$ and let $l_{k+1}$ be a linear form adapted to $x_{k+1}$. Then $(x_1, \cdots, x_{k+1})$ is a right decomposition of $Vect(x_1, \cdots, x_{k+1})$. By iteration, we get: $(x_1, \cdots, x_p)$ is a right decomposition of $Vect(x_1, \cdots, x_p)$, and given Lemma \ref{base-norme} and $\dim(V)=p$, $(x_1, \cdots, x_p)$ is a right decomposition of $V$.
\end{proof}

\begin{lemma}
\label{estimation-coordonnees}
Let $(x_1,\cdots,x_p)$ be a right decomposition of $V$ and $u = \underset{i=1}{\overset{p}{\sum}} a_i x_i \in V$. Then $\forall i, |a_i| \leq 2^{i-1} ||u||$.
\end{lemma}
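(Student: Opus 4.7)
The plan is to recover the coefficients $a_i$ one at a time by applying the adapted linear forms $l_1,\dots,l_p$ to $u$, exploiting the triangular structure that the definition of a right decomposition forces on the matrix $(l_i(x_j))_{i,j}$, and then to proceed by induction on $i$.

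First I would set up the key observation. By definition of a right decomposition with adapted linear forms $l_1,\dots,l_p$, we have $l_i(x_i)=1$ and $l_i(x_j)=0$ for every $j>i$. Consequently, for $u = \sum_{j=1}^{p} a_j x_j$,
\[
l_i(u) \;=\; \sum_{j\le i} a_j\, l_i(x_j) \;=\; a_i + \sum_{j<i} a_j\, l_i(x_j),
\]
so that
\[
a_i \;=\; l_i(u) \;-\; \sum_{j<i} a_j\, l_i(x_j).
\]
Because $\|l_i\|=1$ and $\|x_j\|=1$, we have $|l_i(u)|\le\|u\|$ and $|l_i(x_j)|\le 1$, giving the fundamental estimate
\[
|a_i| \;\le\; \|u\| + \sum_{j<i} |a_j|.
\]

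Now I would argue by induction on $i$. The base case $i=1$ is immediate: $a_1 = l_1(u)$, hence $|a_1|\le\|u\| = 2^{0}\|u\|$. For the inductive step, assuming $|a_j|\le 2^{j-1}\|u\|$ for every $j<i$, the displayed estimate yields
\[
|a_i| \;\le\; \|u\| + \sum_{j=1}^{i-1} 2^{j-1}\|u\| \;=\; \|u\|\bigl(1 + (2^{i-1}-1)\bigr) \;=\; 2^{i-1}\|u\|,
\]
which completes the induction.

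There is no real obstacle here; the statement is essentially a bookkeeping consequence of the triangular structure imposed by the adapted forms, and the factor $2^{i-1}$ arises precisely from the geometric summation in the induction. The only thing worth double-checking is the orientation of the triangular matrix $(l_i(x_j))$—once one confirms that $l_i$ annihilates $x_j$ for $j>i$ (so that the indices $\le i$ are the surviving ones when evaluating $l_i(u)$), the proof essentially writes itself.
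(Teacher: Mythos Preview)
Your proof is correct and follows exactly the same approach as the paper: apply the adapted forms $l_i$ to $u$, use the triangular structure $l_i(x_j)=0$ for $j>i$ together with $\|l_i\|=\|x_j\|=1$ to obtain $|a_i|\le \|u\|+\sum_{j<i}|a_j|$, and conclude by induction. The paper's version is simply a one-line compression of your argument.
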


\begin{proof}
As $\langle l_i , u \rangle=\underset{k=1}{\overset{i-1}{\sum}} a_k \langle l_i , x_k \rangle + a_i$ and $||l_i||=1$, we get $|a_i| \leq ||u||+ \underset{k=1}{\overset{i-1}{\sum}} |a_k|$ and therefore $|a_i| \leq 2^{i-1} ||u||$.
\end{proof}

Now we have all the tools to define a new metric on $\mathcal{G}_p(E)$ and compare it to the metric $d_H$. For that, see that if $(x_1,\dots,x_p)$ and $(y_1,\dots,y_p)$ are bases of a $p$-dimensional space $V$ of $E$, then $x_1 \wedge \dots \wedge x_p$ and $y_1 \wedge \dots \wedge y_p$ are colinear in $\bigwedge\nolimits^pE$. We can thus identify the $p$-dimensional space $V$ of $E$ to the vector line $\textnormal{Span}(x_1 \wedge \dots \wedge x_p)$ of $\bigwedge\nolimits^p E$.

The following definition and theorem are proved for the norm $\|.\|_{\wedge,p}=\|.\|_{\wedge1,p}$. However, they remain true for $\|.\|_{\wedge,p}=\|.\|_{\wedge2,p}$, thanks to Lemma \ref{equiv exterior}.

\begin{defi}
For $V \in  \mathcal{G}_p(E)$, $\bigwedge\nolimits^pV=V \wedge \cdots \wedge V$ is a 1-dimensional subspace of $\bigwedge\nolimits^p E$. Hence we define for $V, W \in \mathcal{G}_p(E)$ :
\[ d_{\wedge,p}(V,W)= \inf \{ ||v - w||_{\wedge,p} ~ |~  (v,w) \in \bigwedge\nolimits^pV \times \bigwedge\nolimits^p W \textnormal{ with }||v||_{\wedge,p}=||w||_{\wedge,p}=1 \} \]

This corresponds to the grassmannian metric between the complex lines $\bigwedge\nolimits^pV$ and $\bigwedge\nolimits^pW$ in $(\bigwedge\nolimits^pE, ||.||_{\wedge,p})$.
\end{defi}

\begin{rem}
If $x \in \bigwedge\nolimits^pV, y \in \bigwedge\nolimits^pW$ with $\|x\|_{\wedge,p}=\|y\|_{\wedge,p}=1$, then $d_{\wedge,p}(V,W)=\inf\limits_{|\lambda|=1}\|x-\lambda y\|_{\wedge,p}$.
\end{rem}

\begin{theorem}
\label{metriques-equivalentes}
For $p \in \mathbb{N}$, there exists constants $c_p,c_p' >0$ such that for all $E$ normed vector space, for all $V,W \in \mathcal{G}_p(E)$, 
\[ c_p d_{H}(V,W) \leq  d_{\wedge,p}(V,W) \leq c_p' d_H(V,W) \]
The metrics $d_H$ and $d_{\wedge,p}$ are equivalent on $\mathcal{G}_p(E)$.
\end{theorem}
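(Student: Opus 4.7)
I argue the two inequalities separately, working with $\|\cdot\|_{\wedge 1,p}$; the statement for $\|\cdot\|_{\wedge 2,p}$ then follows from the equivalence of the two wedge norms (Lemma \ref{equiv exterior}).

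\emph{Upper bound $d_{\wedge,p}(V,W) \leq c_p' d_H(V,W)$.} The idea is to transport a right decomposition of $V$ to a near-basis of $W$. Pick a right decomposition $(x_1,\dots,x_p)$ of $V$; each $x_i$ lies on $S_V$, so by compactness of $S_W$ (finite-dimensional unit sphere) there exist $y_i \in S_W$ with $\|x_i-y_i\| \leq d_H(V,W)$. Set $v = x_1 \wedge \cdots \wedge x_p$ and $w' = y_1 \wedge \cdots \wedge y_p$. Lemma \ref{norme-difference}(i) gives $\|v-w'\|_{\wedge 1,p} \leq p \cdot p! \cdot d_H(V,W)$, while Lemma \ref{base-norme} gives $\|v\|_{\wedge 1,p} \geq 1$. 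For $d_H(V,W)$ small enough that $\|w'\|_{\wedge 1,p} > 0$ (so the $y_i$ actually span $W$), the normalizations $v/\|v\|$ and $w'/\|w'\|$ represent $\bigwedge\nolimits^p V$ and $\bigwedge\nolimits^p W$, and the usual normalization estimate yields $d_{\wedge,p}(V,W) \leq 2\|v-w'\|_{\wedge 1,p}/\|v\|_{\wedge 1,p} \leq 2p \cdot p! \cdot d_H(V,W)$. Outside this regime, both metrics are $\leq 2$ and the same inequality survives (with a larger constant).

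\emph{Lower bound: key estimate.} The crux is
$$\|x \wedge w\|_{\wedge 1,p+1} \geq \frac{d(x,W)}{p!}\,\|w\|_{\wedge 1,p} \qquad \text{for every } x \in E \text{ and } w \in \bigwedge\nolimits^p W \setminus \{0\}.$$
To prove it, take a right decomposition $(y_1,\dots,y_p)$ of $W$ with adapted forms $(m_1,\dots,m_p)$, and use Hahn--Banach on the quotient $E/W$ to produce $m \in E'$ with $\|m\|=1$, $m|_W = 0$, and $m(x) = d(x,W)$. Writing $w = \beta\, y_1 \wedge \cdots \wedge y_p$, the pairing
$$\langle m \wedge m_1 \wedge \cdots \wedge m_p,\ x \wedge y_1 \wedge \cdots \wedge y_p\rangle$$
is a $(p+1) \times (p+1)$ determinant whose first row is $(d(x,W),0,\dots,0)$ (because $m|_W = 0$), and whose lower-right $p \times p$ block $(m_i(y_j))_{i,j}$ is lower triangular with ones on the diagonal by the right-decomposition property (cf.\ Lemma \ref{base-norme}). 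Cofactor expansion along the first row gives $d(x,W)$. The dual-norm bound $\|m \wedge m_1 \wedge \cdots \wedge m_p\|_* \leq 1$ (read off the very definition of $\|\cdot\|_{\wedge 1,p+1}$) together with $\|y_1 \wedge \cdots \wedge y_p\|_{\wedge 1,p} \leq p!$ (iterated Lemma \ref{p->p+1}(i)) then yields the estimate.

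\emph{Conclusion of the lower bound.} Take $v \in \bigwedge\nolimits^p V$, $w \in \bigwedge\nolimits^p W$ with $\|v\|_{\wedge 1,p} = \|w\|_{\wedge 1,p} = 1$ and $\lambda \in \mathbb{C}$ with $|\lambda|=1$ almost achieving the infimum defining $d_{\wedge,p}(V,W)$. For any $x \in V$ we have $x \wedge v = 0$, hence $x \wedge (v - \lambda w) = -\lambda\, x \wedge w$; Lemma \ref{p->p+1}(i) then gives $\|x \wedge w\|_{\wedge 1,p+1} \leq (p+1)\|v - \lambda w\|_{\wedge 1,p}$. Combined with the key estimate, $d(x,W) \leq p!(p+1)\|v - \lambda w\|_{\wedge 1,p}$ for every $x \in S_V$. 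The same argument with $V$ and $W$ exchanged controls $d(y,V)$ for $y \in S_W$. Taking the supremum and letting $\lambda$ approach the infimum gives $\delta(V,W) \leq p!(p+1)\, d_{\wedge,p}(V,W)$, and the relation $d_H \leq 2\delta$ from the preceding proposition delivers $d_H(V,W) \leq 2p!(p+1)\, d_{\wedge,p}(V,W)$.

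The main obstacle is the key estimate. It demands, simultaneously, a right decomposition of $W$ (to tame $\|w\|_{\wedge 1,p}$ from above) and an independent Hahn--Banach form vanishing on $W$ (to extract $d(x,W)$ from below); the wedge of all these forms is precisely what makes the resulting determinant collapse to $d(x,W)$.
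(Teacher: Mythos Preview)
Your proof is correct, and the upper bound is essentially identical to the paper's. Your lower bound, however, is genuinely different from---and considerably cleaner than---the paper's argument.

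The paper proves $c_p\,d_H \le d_{\wedge,p}$ by induction on $p$: assuming the inequality in dimension $p$, it splits the $(p+1)$-dimensional case into three geometric configurations (roughly: $V$ and $W$ close along a $p$-dimensional subspace; close along one direction but all $p$-subspaces somewhat apart; uniformly apart in every direction), and handles each by a separate lemma with explicit constructions of right decompositions and careful bookkeeping of constants. The resulting $c_p$ decays very fast (the recursion $c_{p+1}\le c_p k_p/(4(p+1)!)$ with $k_p = 1/(2^{p+5}(p+1)^2(p+1)!)$).

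Your route avoids both the induction and the case split. The single ``key estimate'' $\|x\wedge w\|_{\wedge1,p+1}\ge \frac{d(x,W)}{p!}\|w\|_{\wedge1,p}$---obtained by pairing against $m\wedge m_1\wedge\cdots\wedge m_p$ where $m$ is a Hahn--Banach functional vanishing on $W$ and the $m_i$ are adapted to a right decomposition of $W$---does all the work. Combined with $x\wedge v=0$ for $x\in V$ and Lemma~\ref{p->p+1}(i), it gives a direct bound $d_H(V,W)\le 2(p+1)!\,d_{\wedge,p}(V,W)$, a far better explicit constant. The trade-off is that the paper's case analysis gives some geometric insight into \emph{where} the Hausdorff distance is realised relative to the wedge distance, while your argument is more opaque geometrically but much shorter and yields a usable constant.
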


 \begin{proof}
Let's first prove the second inequality. Let $p \in \mathbb{N}$, $E$ be a normed vector space, $V,W \in \mathcal{G}_p(E)$ and $d=d_H(V,W)$. Let $ x_1 \wedge \cdots \wedge x_p$ be a right decomposition of $V$. For all $i$, there exists $y_i \in W$ such that $||x_i-y_i|| \leq d$ and  $||y_i|| = 1$. Then, by Lemma \ref{norme-difference}, $ ||x_1 \wedge \cdots \wedge x_p - y_1 \wedge \cdots \wedge y_p || \leq d.p.p!$.
As $ x_1 \wedge \cdots \wedge x_p$ is a right decomposition of $V$, $||x_1 \wedge \cdots \wedge x_p || \geq 1$, so for $x= \frac{x_1 \wedge \cdots \wedge x_p }{||x_1 \wedge \cdots \wedge x_p ||} \in \bigwedge\nolimits^pV$ and $y = \frac{y_1 \wedge \cdots \wedge y_p }{||x_1 \wedge \cdots \wedge x_p ||} \in \bigwedge\nolimits^mW$, we have $||x||=1$ and $||x -y|| \leq  d.p.p!$.Thus $dist(x,\textnormal{Span}(y)) \leq  d.p.p!$ and $d_{\wedge,p}(V,W) \leq (2.p.p!).d$. The constant $c_p' = 2.p.p!$ works.  \\ ~ \\ 

Now let's prove recursively the remaining inequality of our proposition. Let $(H_p)$ : "there exists $c_p >0$ such that for all $E$ normed vector space, for all $V,W \in \mathcal{G}_1(E)$, $c_p d_H(V,W) \leq  d_{\wedge,p}(V,W)$". $(H_1)$ is true as $d_H(V,W) =  d_{\wedge,1}(V,W)$ for $V,W \in \mathcal{G}_p(E)$.
Let's assume $(H_p)$. Let $E$ be a normed vector space and let $V,W \in Gr_{p+1}(E)$. Let $d=d_H(V,W)$. We divide the proof in 3 cases.

\begin{lemma} Assume there exists $V'$ and $W'$ subspaces of dimension $p$ of (respectively) $V$ and $W$ such that $d_H(V',W') \leq \frac{1}{2^{p+5}(p+1)(p+1)!^2} d$ (which means: $V$ and $W$ are very very close on $p$ directions, and the 'last direction' is what gives the Hausdorff distance $d$ between $V$ and $W$). Then $d_{\wedge,p+1}(V,W) \geq \frac{1}{(p+1)!2^{p+3}}d$.
\end{lemma}

\begin{proof} The idea is to complete a right decomposition of $V'$ and $W'$ to get a right decomposition of $V$ and $W$. Then, when we estimate $\|x - \lambda y\|_{\wedge,p+1}$, we get two terms : one corresponding to the $p$ first directions (small term compared to $d$), and one corresponding to the last direction (term with size comparable to $d$).

Let $x_1, \cdots, x_p$ be a right decomposition of $V'$ and $H_1, \cdots, H_p$ the adapted hyperplanes. For all $i \in [1..p]$, there exists $y_i \in W'$ such that $||y_i|| = 1$ and $||x_i - y_i|| \leq  d_H(V',W') \leq \frac{1}{2^{p+5}(p+1)!^2} d$. Let $x_{p+1} \in V\cap H_1 \cap \cdots \cap H_p$ and $y_{p+1} \in W\cap H_1 \cap \cdots \cap H_p$ with $||x_{p+1}||=||y_{p+1}|| = 1$  ($(x_1, \cdots,x_{p+1})$ and $(x_1, \cdots, x_p,y_{p+1})$ are right decompositions) and let $\lambda \in \mathds{C}^*$ such that $|\lambda| = 1$. Denoting $x=\dfrac{x_1 \wedge \cdots \wedge x_{p+1}}{||x_1 \wedge \cdots \wedge x_{p+1}||}$, $y=\dfrac{y_1 \wedge \cdots \wedge y_{p+1}}{||y_1 \wedge \cdots \wedge y_{p+1}||}$ and $k= \dfrac{||x_1 \wedge \cdots \wedge x_{p+1}||}{||y_1 \wedge \cdots \wedge y_{p+1} ||}$, we have:

\[ \begin{array}{rlr}
(p+1)!||x - \lambda y|| &=\frac{(p+1)!}{||x_1 \wedge \cdots \wedge x_{p+1}||}.||  x_1 \wedge \cdots \wedge x_{p+1} -  \lambda k y_1 \wedge \cdots \wedge y_{p+1} || \\
&\geq ||  x_1 \wedge \cdots \wedge x_{p+1} -  \lambda k y_1 \wedge \cdots \wedge y_{p+1} || \\
&\geq||  x_1 \wedge \cdots \wedge x_p \wedge (x_{p+1} -  \lambda k y_{p+1}) + (x_1 \wedge \cdots \wedge x_p - y_1 \wedge \cdots \wedge y_p) \wedge \lambda k y_{p+1} || \\
&\geq \underbrace{||  x_1 \wedge \cdots \wedge x_p \wedge (x_{p+1} -  \lambda k y_{p+1}) ||}_{A_1} - \underbrace{|| (x_1 \wedge \cdots \wedge x_p - y_1 \wedge \cdots \wedge y_p) \wedge \lambda k y_{p+1} ||}_{A_2} \\
\end{array}
\]

\textbf{Estimation of $A_2= || (x_1 \wedge \cdots \wedge x_p - y_1 \wedge \cdots \wedge y_p) \wedge \lambda k y_{p+1} ||$}.\\ As $||y_{p+1}|| = 1$, $A_2 \leq (p+1)k.|| x_1 \wedge \cdots \wedge x_p - y_1 \wedge \cdots \wedge y_p ||$. Given how where  chosen the $y_i$ and looking at the very first part of the proof, we get \[ A_2 \leq (p+1)k.(2p.p!.d_H(V',W')) \leq \frac{k}{2^{p+5}(p+1)!}.d \]Let's find an upper bound for $k$. We have $||x_1 \wedge \cdots \wedge x_{p+1}|| \leq (p+1)!$. Moreover, 
\[ \begin{array}{llr}||y_1 \wedge \cdots \wedge y_{p+1} || &\geq ||x_1 \wedge \cdots \wedge x_{p} \wedge y_{p+1} || - || (x_1 \wedge \cdots \wedge x_{p} - y_1 \wedge \cdots \wedge y_{p}) \wedge y_{p+1}||  \\
&\geq 1 - (p+1) ||x_1 \wedge \cdots \wedge x_{p} - y_1 \wedge \cdots \wedge y_{p}||\\
&\geq 1 - (p+1)^2.(p+1)!.d_H(V',W')\\
&\geq 1 - \frac{1}{2^{p+5}}.d \geq \frac{1}{2} 
\end{array}\]
Hence $k \leq 2(p+1)!$ and $A_2 \leq \frac{1}{2^{p+4}}.d$.

\textbf{Estimation of $A_1=||  x_1 \wedge \cdots \wedge x_p \wedge (x_{p+1} -  \lambda k y_{p+1}) ||$}.\\ Let $u= \underset{i=1}{\overset{p+1}{\sum}}a_i x_i \in V$ with $||u||=1$ and $d(u, S_W) \geq \frac{d}{2}$ (such a $u$ exists as $d_H(V,W)=d$). Then $d(u, W) \geq \frac{d}{4}$ and so
\[ \frac{d}{4} \leq ||\underset{i=1}{\overset{p+1}{\sum}}a_i x_i - (\underset{i=1}{\overset{p}{\sum}}a_i y_i + a_{p+1} \lambda k y_{p+1})|| \leq \underset{i=1}{\overset{p}{\sum}}|a_i|.|| x_i- y_i|| + |a_{p+1}|.||x_{p+1}- \lambda k y_{p+1}|| \]
As $x_1,\cdots,x_{p+1}$ is a right decomposition of  $V$ and $||u||=1$, Lemma \ref{estimation-coordonnees} gives $|a_i| \leq 2^{i-1}$. Then
\[ 2^p ||x_{p+1}- \lambda k y_{p+1}|| \geq \frac{d}{4} - \underset{i=1}{\overset{p}{\sum}} 2^{i-1}||x_i-y_i|| \geq \frac{d}{4} - 2^pd_H(V',W') \geq \frac{d}{4} - \frac{d}{2^4}.\]

Given how we've chosen $x_{p+1}$ and $y_{p+1}$, $(x_1, \cdots, x_p, \frac{x_{p+1}- \lambda k y_{p+1}}{||x_{p+1}- \lambda k y_{p+1}||}) $ is a right decomposition. Using Lemma \ref{base-norme}, we have $A_1 \geq ||x_{p+1}- \lambda k y_{p+1}|| \geq (\frac{1}{2^{p+2}}-\frac{1}{2^{p+4}}).d$. Hence
\[||x - \lambda y||  \geq \frac{1}{(p+1)!} (A_1 - A_2) \geq \frac{1}{(p+1)!} (\frac{1}{2^{p+2}}-\frac{1}{2^{p+4}} -\frac{1}{2^{p+4}} ).d \geq \frac{1}{(p+1)!2^{p+3}}.d .\]
which holds for all $\lambda$ with $|\lambda|=1$, giving the desired $d_{\wedge,p+1}(V,W) \geq \frac{1}{(p+1)!2^{p+3}}d$.
\end{proof}

\begin{lemma}Assume that for all $V',W'$ subspaces of dimension $m$ of (respectively) $V$ and $W$, we have $d_H(V',W') > k_p d$ where $k_p=\frac{1}{2^{p+5}(p+1)^2(p+1)!}$. Moreover, assume that there exists $x_{p+1} \in V, y_{p+1} \in W$ with $||x_{p+1}||=||y_{p+1}||=1$ such that $||x_{p+1} - y_{p+1} || \leq  \frac{c_p.k_p}{2(p+1)!}d$ (which means: $V$ and $W$ are very very close on one specific direction). Then $d_{\wedge,p+1}(V,W) \geq \frac{c_p.k_p}{2(p+1)!}d$.
\end{lemma}

\begin{proof}
Let $H$ be a hyperplane adapted to $y_{p+1}$. Let $(x_1, \cdots, x_p)$ be a  right decomposition of $V \cap H$ and $(y_1, \cdots, y_p)$ be a right decomposition of $W \cap H$ (in particular $(y_{p+1},y_1, \cdots, y_p)$ is a right decomposition of $W$). As $||x_{p+1} - y_{p+1} || < 1$, $x_{p+1} \notin H$ so $\dim(V \cap H) = p =\dim(W \cap H)$. As in case 1, let $\lambda \in \mathds{C}$ with $|\lambda|=1$ and let $x=\dfrac{x_1 \wedge \cdots \wedge x_{p+1}}{||x_1 \wedge \cdots \wedge x_{p+1}||}$, $y=\dfrac{y_1 \wedge \cdots \wedge y_{p+1}}{||y_1 \wedge \cdots \wedge y_{p+1}||}$ and $k= \dfrac{||x_1 \wedge \cdots \wedge x_{p+1}||}{||y_1 \wedge \cdots \wedge y_{p+1} ||}$. Then

\[ \begin{array}{rlr}
(p+1)!||x - \lambda y|| &=\frac{(p+1)!}{||x_1 \wedge \cdots \wedge x_{p+1}||}.||  x_1 \wedge \cdots \wedge x_{p+1} -  \lambda k y_1 \wedge \cdots \wedge y_{p+1} || \\
&\geq ||  x_1 \wedge \cdots \wedge x_{p+1} -  \lambda k y_1 \wedge \cdots \wedge y_{p+1} || \\
&\geq||  x_1 \wedge \cdots \wedge x_p \wedge (x_{p+1} -  y_{p+1}) + (x_1 \wedge \cdots \wedge x_p - \lambda k y_1 \wedge \cdots \wedge y_p) \wedge y_{p+1} || \\
&\geq \underbrace{|| (x_1 \wedge \cdots \wedge x_p - \lambda k y_1 \wedge \cdots \wedge y_p) \wedge y_{p+1} ||}_{B_1} - \underbrace{||  x_1 \wedge \cdots \wedge x_p \wedge (x_{p+1} - y_{p+1}) ||}_{B_2}\\
\end{array}
\]

Let's estimate $B_2$. We have $B_2 \leq (p+1)! ||x_{p+1} - y_{p+1}|| \leq \frac{c_p.k_p}{2}.d$.

Let's estimate $B_1$. 
\[ \begin{array}{llr}
B_1 &\geq d(x_1 \wedge \cdots \wedge x_p, \textnormal{Span}(y_1 \wedge \cdots \wedge y_p)) \\
&\geq d(\frac{x_1 \wedge \cdots \wedge x_p}{||x_1 \wedge \cdots \wedge x_p||}, \textnormal{Span}(y_1 \wedge \cdots \wedge y_p)) &\textnormal{as } ||x_1 \wedge \cdots \wedge x_p|| \geq 1 \\
&\geq d_{\wedge,p}( V \cap H, W \cap H) \\
&\geq c_p d_H( V \cap H, W \cap H) &\textnormal{using our recursion hypothesis $(H_p)$} \\
&\geq c_p.k_p.d &\textnormal{by the first assumption of this Lemma}
\end{array}
\]

Thus we got 
$ ||x - \lambda y|| \geq \frac{1}{(p+1)!} (B_1 - B_2) \geq \frac{c_p.k_p}{2(p+1)!}d$ for all $\lambda$ with $|\lambda|=1$, hence $d_{\wedge,p+1}(V,W) \geq \frac{c_p.k_p}{2(p+1)!}d$.
\end{proof}

\begin{lemma}Assume that for all $V',W'$ subspaces of dimension $p$ of (respectively) $V$ and $W$, we have $d_H(V',W') > k_p d$ where $k_p=\frac{1}{2^{p+5}(p+1)^2(p+1)!}$. Moreover, assume that for all $x \in V, y \in W$ with $||x||=||y||=1$, $||x - y|| >  \frac{c_p.k_p}{2(p+1)!}d$ (which means: $V$ and $W$ are a little far away from each other in every direction). Then $d_{\wedge,p+1}(V,W) \geq \frac{c_p.k_p}{4(p+1)!}d$.
\end{lemma}

\begin{proof}
In this case, $V \cap W = \{ 0 \}$. Let $(x_1, \cdots, x_{p+1})$ be a right decomposition of $V$ and let $(l_1',l_2, \cdots, l_{p+1})$ be the adapted linear forms (with norm $1$). Let $(y_1, \cdots, y_{p+1})$ be a basis of $W$. Let $\lambda \in \mathds{C}$. We define 
\[ \begin{array}{rccc}
l_1 : &\textnormal{Vect}(V,W) &\longrightarrow &\mathds{C} \\
&\underset{i=1}{\overset{p+1}{\sum}}a_i x_i + \underset{i=1}{\overset{p+1}{\sum}}b_i y_i &\longmapsto &a_1 
\end{array}
\]

Denoting $x = x_1 \wedge \cdots \wedge  x_{p+1}$ and $y =  y_1 \wedge \cdots \wedge  y_{p+1}$, we get

\[  
\langle l_1 \wedge \cdots \wedge l_{p+1} , x - \lambda y \rangle = \det (\langle l_i , x_j \rangle) - \underbrace{\det (\langle l_i , y_j \rangle)}_{=0 \textnormal{ as }\langle l_1 , y_j \rangle =0~  \forall j} = 1 \textnormal{~ ~ ~ ~ ~ (as in Lemma \ref{base-norme})}
\]
Let's estimate $||l_1||$. Let $u = \underset{i=1}{\overset{p+1}{\sum}}a_i x_i + \underset{i=1}{\overset{p+1}{\sum}}b_i y_i$.
\[ \begin{array}{rlr}
||u|| &\geq d(\underset{i=1}{\overset{p+1}{\sum}}a_i x_i, W) \\
&\geq || \underset{i=1}{\overset{p+1}{\sum}}a_i x_i|| .d\left(\frac{\underset{i=1}{\overset{p+1}{\sum}}a_i x_i}{ ||\underset{i=1}{\overset{p+1}{\sum}}a_i x_i||}, W\right) \\
&\geq || \underset{i=1}{\overset{p+1}{\sum}}a_i x_i||.\frac{1}{2} .d\left(\frac{\underset{i=1}{\overset{p+1}{\sum}}a_i x_i}{|| \underset{i=1}{\overset{p+1}{\sum}}a_i x_i||}, S_W\right) \\
&\geq || \underset{i=1}{\overset{p+1}{\sum}}a_i x_i||.\frac{1}{2}.\frac{c_p.k_p}{2(p+1)!}.d & \textnormal{given the second assumption of this lemma} \\
&\geq \frac{c_p.k_p}{4(p+1)!}.d.|a_1| &\textnormal{as $(x_1, \cdots, x_{p+1})$ is a right decomposition of $V$}
\end{array}
\]
Hence $||\frac{c_p.k_p}{4(p+1)!}d. l_1|| \leq 1$ and as $\langle \frac{c_p.k_p}{4(p+1)!}d. l_1 \wedge \cdots \wedge l_{p+1} , x - \lambda y \rangle = \frac{c_p.k_p}{4(p+1)!}d$ we get $||x - \lambda y|| \geq \frac{c_p.k_p}{4(p+1)!}d$.
As $||x|| \geq 1$, $d(\frac{x}{||x||},\textnormal{Span}(y)) \geq d(x,\textnormal{Span}(y))$ and therefore $d_{\wedge,p+1}(V,W) \geq \frac{c_p.k_p}{4(p+1)!}d$. 
\end{proof}

Combining the three previous lemmas and taking $c_{p+1} = \min(\frac{1}{(p+1)!2^{p+3}},\frac{c_p.k_p}{2(p+1)!},\frac{c_p.k_p}{4(p+1)!})$, we have $d_{\wedge,p+1}(V,W) \geq c_{p+1}d_H(V,W)$, achieving the proof of Theorem \ref{metriques-equivalentes}.

\end{proof}

\begin{lemma}
\label{equiv exterior}
Let $k \in \mathbb{N}$. The norms $\|.\|_{\wedge1,p}$ and $\|.\|_{\wedge2,p}$ are equivalent on the subset $S_k$ of $\bigwedge\nolimits^pE$ containing the sums of at most $k$ decomposable tensors.
\end{lemma}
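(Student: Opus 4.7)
The easy direction is free: Proposition \ref{prop1} already gives $\|\cdot\|_{\wedge1,p}\leq(\sqrt{p})^{p}\|\cdot\|_{\wedge2,p}$ on all of $\bigwedge\nolimits^{p}E$, so only the reverse bound on $S_{k}$ requires real work. My plan is to reduce it to a finite-dimensional estimate via Auerbach's theorem, exploiting that the ``at most $k$'' hypothesis forces $x$ to live in a subspace of bounded dimension.

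Fix $x\in S_{k}$ and choose a decomposition $x=\sum_{i=1}^{k}x^{i}_{1}\wedge\cdots\wedge x^{i}_{p}$ realising the membership. Set $F=\textnormal{Span}(x^{i}_{j})_{i,j}$, a finite-dimensional subspace of $E$ with $n:=\dim F\leq kp$, so $x\in\bigwedge\nolimits^{p}F$. Two preliminary identifications let me work inside $F$: Hahn--Banach shows $\|x\|_{\wedge1,p}^{E}=\|x\|_{\wedge1,p}^{F}$ (every unit-norm functional on $F$ extends to one on $E$ without increasing its norm), and $\|x\|_{\wedge2,p}^{E}\leq\|x\|_{\wedge2,p}^{F}$ (decompositions of $x$ inside $F$ are in particular decompositions inside $E$).

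Inside $F$, I invoke Auerbach's theorem to get a basis $(e_{1},\dots,e_{n})$ of $F$ with $\|e_{i}\|=1$ and a dual basis $(e_{1}^{*},\dots,e_{n}^{*})$ of $F'$ with $\|e_{i}^{*}\|=1$ and $\langle e_{i}^{*},e_{j}\rangle=\delta_{ij}$. Writing $x=\sum_{|I|=p}a_{I}e_{I}$ over ordered $p$-subsets $I=\{i_{1}<\cdots<i_{p}\}$ with $e_{I}=e_{i_{1}}\wedge\cdots\wedge e_{i_{p}}$, the orthogonality relations make $\langle e_{i_{1}}^{*}\wedge\cdots\wedge e_{i_{p}}^{*},e_{J}\rangle=\delta_{IJ}$ via the determinant formula in Definition \ref{defi1}, hence $a_{I}=\langle e_{i_{1}}^{*}\wedge\cdots\wedge e_{i_{p}}^{*},x\rangle$. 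Extending each $e_{i_{\alpha}}^{*}$ to a unit-norm functional on $E$ via Hahn--Banach, the definition of $\|\cdot\|_{\wedge1,p}$ gives $|a_{I}|\leq\|x\|_{\wedge1,p}$. Summing over the $\binom{n}{p}$ indices,
\[\|x\|_{\wedge2,p}\leq\sum_{I}|a_{I}|\,\|e_{i_{1}}\|\cdots\|e_{i_{p}}\|\leq\binom{n}{p}\|x\|_{\wedge1,p}\leq\binom{kp}{p}\|x\|_{\wedge1,p},\]
closing the proof with a constant $C_{k}=\binom{kp}{p}$ depending only on $k$ and $p$.

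The one mildly delicate point is the first identification $\|x\|_{\wedge1,p}^{E}=\|x\|_{\wedge1,p}^{F}$, without which the reduction to $F$ collapses; but this is standard Hahn--Banach. Alternatively, one could bypass Auerbach and instead complete any right decomposition of $F$, extracting coordinates via Lemma \ref{estimation-coordonnees} at the cost of an extra factor roughly $2^{n}$ in the constant, thereby keeping the entire argument within the framework developed earlier in the paper.
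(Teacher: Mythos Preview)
Your proof is correct and follows essentially the same route as the paper: reduce to the finite-dimensional subspace $F$ spanned by the entries of a decomposition of $x$ (so $\dim F\le kp$), pick a normalized basis with unit-norm dual functionals, read off the coefficients $\lambda_I$ via $\langle l_{i_1}\wedge\cdots\wedge l_{i_p},x\rangle$, and sum to get $\|x\|_{\wedge2,p}\le\binom{kp}{p}\|x\|_{\wedge1,p}$. The only cosmetic difference is that the paper phrases the choice of basis in its own language of an ``adapted decomposition'' of $F$, whereas you invoke Auerbach's theorem by name; your biorthogonal Auerbach basis in fact makes the coefficient-extraction step $a_I=\langle e_{i_1}^*\wedge\cdots\wedge e_{i_p}^*,x\rangle$ slightly cleaner than with a merely triangular right decomposition.
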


\begin{proof}
From Proposition \ref{prop1}, on $\bigwedge\nolimits^p E$, $\|.\|_{\wedge1,p} \leq (\sqrt{p})^p \|.\|_{\wedge2,p}$.

Let $x=\underset{i=1}{\overset{k}{\sum}} x_1^i \wedge \cdots \wedge x_p^i \in S_k$. Let $F=\Span(x_k^i)$. Then $\dim F \leq kp$. We consider $(e_1,\dots,e_d)$ an adapted decomposition of $F$ and $(l_1,\dots,l_d)$ the corresponding linear forms ($d \leq kp$). In this basis, $x=\underset{i_1<\dots<i_p}{{\sum}} \lambda_I e_{i_1} \wedge \cdots \wedge e_{i_p}$. Then
$$\|x\|_{\wedge1,p} \geq |\langle l_{i_1} \wedge \cdots \wedge \l_{i_p},x\rangle |=|\lambda_I|$$
which gives
$$\|x\|_{\wedge2,p} \leq \underset{i_1<\dots<i_p}{{\sum}} |\lambda_I| \|e_{i_1} \| \cdots \| e_{i_p}\| \leq \underset{i_1<\dots<i_p}{{\sum}} \|x\|_{\wedge1,p} \leq \begin{pmatrix} kp \\ p \end{pmatrix} \|x\|_{\wedge1,p}.$$
\end{proof}

\begin{prop}
\label{cv exterieur}
The subset $\{ x_1\wedge\dots\wedge x_p ~ |~ x_1,\dots,x_p \in E \}$ of $\bigwedge\nolimits^pE$ is a complete metric space for $\|.\|_{\wedge,p}$.
\end{prop}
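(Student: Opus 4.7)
The plan is to reduce a Cauchy sequence of decomposable tensors to a Cauchy sequence in $(\mathcal{G}_p(E), d_{\wedge,p})$, apply Theorem \ref{metriques-equivalentes} together with the earlier completeness statement for $(\mathcal{G}(E), d_H)$, and then lift the limit back up to $\bigwedge\nolimits^p E$ to recover a decomposable tensor of the right norm. So I start from a Cauchy sequence $(u_n)$ in the subset of decomposable tensors, with each $u_n = x_1^n \wedge \cdots \wedge x_p^n$, and aim to produce a limit of the same form.

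First I track the scalar norms. Since $|\|u_n\|_{\wedge,p} - \|u_m\|_{\wedge,p}| \leq \|u_n - u_m\|_{\wedge,p}$, the real sequence $\|u_n\|_{\wedge,p}$ converges to some $\alpha \geq 0$. The degenerate case $\alpha=0$ is handled separately: then $u_n \to 0$, and $0$ itself is decomposable (write $0 = 0 \wedge x_2 \wedge \cdots \wedge x_p$). Otherwise $\|u_n\|_{\wedge,p} \geq \alpha/2$ for $n$ large, so the normalized sequence $\hat u_n = u_n / \|u_n\|_{\wedge,p}$ is again Cauchy by a routine estimate. Let $V_n = \textnormal{Span}(x_1^n, \dots, x_p^n) \in \mathcal{G}_p(E)$; this is well-defined independently of the chosen factorization. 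The very definition of $d_{\wedge,p}$ gives $d_{\wedge,p}(V_n, V_m) \leq \|\hat u_n - \hat u_m\|_{\wedge,p}$, so $(V_n)$ is Cauchy for $d_{\wedge,p}$, hence Cauchy for $d_H$ by Theorem \ref{metriques-equivalentes}. The earlier proposition (completeness of $(\mathcal{G}(E), d_H)$ and closedness of $\mathcal{G}_p(E)$ therein) yields a limit $V \in \mathcal{G}_p(E)$, and equivalence of the metrics again gives $d_{\wedge,p}(V_n, V) \to 0$.

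To finish, I fix $v \in \bigwedge\nolimits^p V$ with $\|v\|_{\wedge,p} = 1$. The convergence $d_{\wedge,p}(V_n, V) \to 0$ supplies scalars $\lambda_n \in S^1$ such that $\|\hat u_n - \lambda_n v\|_{\wedge,p} \to 0$. Extracting a convergent subsequence $\lambda_{n_k} \to \lambda \in S^1$ by compactness of the unit circle, I get $u_{n_k} \to \alpha \lambda v$, and the Cauchy property of the whole sequence $(u_n)$ forces $u_n \to \alpha \lambda v$. This limit lies in the one-dimensional space $\bigwedge\nolimits^p V$ and is therefore a scalar multiple of the decomposable tensor $v$, hence decomposable. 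The main delicate points I anticipate are (i) isolating the degenerate case $\alpha = 0$, where the subspaces $V_n$ need not stabilize and the metric machinery does not apply, and (ii) the subsequence extraction step, which uses both compactness of $S^1$ and the fact that any nonzero element of $\bigwedge\nolimits^p V$ is automatically decomposable. Beyond these, every step is a direct invocation of an already-established result or a routine norm estimate.
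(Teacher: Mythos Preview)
Your proof is correct and follows essentially the same route as the paper: pass to the Grassmannian via Theorem \ref{metriques-equivalentes}, use completeness of $(\mathcal{G}_p(E), d_H)$ to obtain a limit $V$, and lift back to a decomposable tensor in $\bigwedge\nolimits^p V$. The only minor difference is that the paper shows the phase sequence $(\lambda_n)$ is Cauchy directly via a triangle-inequality estimate rather than extracting a convergent subsequence by compactness of $S^1$; your treatment of the degenerate case and of normalization is in fact slightly more careful than the paper's.
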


\begin{proof}
Let $\alpha_n=x_1^n \wedge\dots\wedge x_p^n$ be a Cauchy sequence for $\|.\|_{\wedge,p}$. If there are infinitely many $k$ such that $\alpha_k=0$, then $\alpha_n \underset{n \rightarrow +\infty}{\longrightarrow} 0$.
Else, for $n$ big enough we have $\alpha_n \neq 0$ hence $V_n=\textnormal{Span}(x_1^n,\dots,x_p^n)$ is a $p$-dimensional subspace of $E$. As $d_{\wedge,p}$ and $d_H$ are equivalent, $V_n$ is Cauchy in the complete metric space $(\mathcal{G}_p(E),d_H)$ and therefore converges to $V \in \mathcal{G}_p(E)$. Let $(x_1,\dots,x_p)$ be a basis of $V$ such that $\alpha=x_1\wedge\dots\wedge x_p$ satisfies $\|\alpha\|_{\wedge,p}=1$.  As $d_{\wedge,p}$ and $d_H$ are equivalent, there exists a sequence $\lambda_n \in \mathbb{U}$ such that $(\alpha_n - \lambda_n \alpha) \underset{n \rightarrow +\infty}{\longrightarrow} 0$. Then 

$$|\lambda_n - \lambda_p| \leq \|\lambda_n \alpha  - \lambda_p \alpha\|_{\wedge,p}=\|\lambda_n \alpha  - \alpha_n\|_{\wedge,p}+\|\alpha_n  - \alpha_p\|_{\wedge,p}+\|\lambda_p \alpha  - \alpha_p\|_{\wedge,p}.$$
This shows that the sequence $(\lambda_n)$ is Cauchy and therefore converges to some $\lambda \in \mathbb{U}$. This gives us $\alpha_n \underset{n \rightarrow +\infty}{\longrightarrow} \lambda \alpha$.
\end{proof}

\section{Regularity of $p$-cones and spectral gap theorem}
\label{spectral gap}

In this section, we note $|.|$ the standard hermitian norm on $\mathbb{C}^p$. We first define a notion of bounded aperture for $p$-cones, then use it to compare norms with our cone "distance". We finally use these comparisons to get a spectral gap result. Here, the norm $\|.\|_{\wedge,p}$ on $\bigwedge\nolimits^pE$ can denote any of the previous norms on $\bigwedge\nolimits^pE$, as we never make computations on more than 2 decomposable tensors.

\begin{defi}
\label{aperture-def}
We say that a $p$-cone $\mathcal{C} \subset E$ is of aperture bounded by $K>0$ if there exists a continuous linear map $m:E \rightarrow \mathbb{C}^p$ such that $\forall u \in \mathcal{C}, \|m\|.\|u\| \leq K |m(u)|$. \\
We say that a $p$-cone $\mathcal{C} \subset E$ is of sectional aperture bounded by $K>0$ if for every $(V,W)$ $p$-dimensional spaces contained in $\mathcal{C}$, there exists a linear map $m_{V,W}:\textnormal{Span}(V,W) \rightarrow \mathbb{C}^p$ such that $\forall x \in \mathcal{C} \cap \textnormal{Span}(V,W), \|m_{V,W}\|.\|u\| \leq K |m_{V,W}(u)|$.
\end{defi}

\begin{lemma}
\label{aperture-1}
Let $\mathcal{C} \subset E$ be a $p$-cone of aperture bounded by $K>0$. Let $V,W$ be $p$-dimensional spaces contained in $\mathcal{C}$ and $m=m_{V,W}$ a linear map satisfying the inequality from Definition \ref{aperture-def}.
Then for all $x \in V^*$, there exists $y\in W^*$ such that 
$$\left\| \dfrac {x} {|m(x)|}-\dfrac {y} {|m(y)| }\right\| \leq \dfrac {K} {\|m\|}d_{\mathcal{C}}(V,W)$$
and even

$$\left\|x-y\right\| \leq \dfrac {K} {\|m\|}|m(x)|d_{\mathcal{C}}(V,W)$$
\end{lemma}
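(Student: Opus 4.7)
The plan is to reduce bounding the normalized difference to bounding $\|x-y\|$ itself, by selecting $y$ so that $m(y)=m(x)$, and then to extract the bound from the bounded aperture hypothesis applied to a suitable boundary element of the section $E_{\mathcal{C}'}(x,y)$.

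First I would show that $m|_W\colon W\to\mathbb{C}^p$ is a bijection: any $w\in W\setminus\{0\}$ lies in $\mathcal{C}$ and bounded aperture gives $m(w)\neq 0$, so $m|_W$ is injective and hence bijective by dimension count. Define $y:=(m|_W)^{-1}(m(x))\in W^*$; then $m(y)=m(x)$, so $|m(y)|=|m(x)|$ and
\[\frac{x}{|m(x)|}-\frac{y}{|m(y)|}=\frac{x-y}{|m(x)|}.\]
Both inequalities follow once I prove $\|x-y\|\leq K|m(x)|d/\|m\|$, where $d:=d_{\mathcal{C}}(V,W)$. If $\Span(x,y)\subset\mathcal{C}$, then $x-y\in\mathcal{C}$ combined with $m(x-y)=0$ forces $x=y$, so I may assume $\mathcal{C}':=\mathcal{C}\cap\Span(x,y)$ is a $\mathbb{C}$-cone with $\delta_{\mathcal{C}'}(x,y)\leq d$. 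Using the Dubois gauge, $\delta_{\mathcal{C}'}(x,y)=\log(b/a)$ with $a=\inf|E_{\mathcal{C}'}(x,y)|$ and $b=\sup|E_{\mathcal{C}'}(x,y)|$, and since $x-y\notin\mathcal{C}$ the value $1$ lies in $E_{\mathcal{C}'}$, hence $a\leq 1\leq b\leq a e^d$.

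The heart of the argument is the estimate $\min(1-a,b-1)\leq\tanh(d/2)\leq d/2$. Writing $a=e^{-s}$ and $b=e^{d-s}$ with $s\in[0,d]$, an elementary computation shows that $\max_{s\in[0,d]}\min(1-e^{-s},e^{d-s}-1)=\tanh(d/2)$, the common value attained where the two branches coincide. I then pick $\lambda\in\{a,b\}$ realizing this minimum. By the sup/inf characterization of $a$ and $b$, combined with closedness of $\mathcal{C}$, $\lambda x-y\in\mathcal{C}$, so bounded aperture gives $\|\lambda x-y\|\leq K|\lambda-1||m(x)|/\|m\|$. Combining with $\|x\|\leq K|m(x)|/\|m\|$ (aperture applied to $x\in\mathcal{C}$) via the triangle inequality,
\[\|x-y\|\leq|1-\lambda|\,\|x\|+\|\lambda x-y\|\leq 2K|\lambda-1|\,|m(x)|/\|m\|\leq Kd\,|m(x)|/\|m\|,\]
which yields both stated inequalities simultaneously.

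The main obstacle is the sharp estimate $\min(1-a,b-1)\leq d/2$: a crude bound like $\max(1-a,b-1)\leq e^d-1$ would introduce an exponential dependence on $d$ and destroy the linear constant $K/\|m\|$ claimed in the statement. The flexibility to choose $y\in W$ with $m(y)=m(x)$ and then to take $\lambda$ on the \emph{nearer} of the two boundary circles $\{|z|=a\}$ and $\{|z|=b\}$ is what produces the factor-two cancellation needed for the clean final constant.
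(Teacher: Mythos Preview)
Your proof is correct and is essentially the Dubois-gauge argument that the paper defers to in its Case~2 (``exact same proof and calculations as in [Dub09]''): choose $y\in W$ with $m(y)=m(x)$, observe $1\in E_{\mathcal{C}'}(x,y)$, pick a boundary modulus $\lambda\in\{a,b\}$ so that $\lambda x-y\in\mathcal{C}$, and combine the aperture bound on $\lambda x-y$ and on $x$ via the triangle inequality. One presentational wrinkle: you write $a=e^{-s}$, $b=e^{d-s}$ as if $b/a=e^d$ exactly, whereas only $b\le ae^d$; but since $a\ge b e^{-d}\ge e^{-d}$ forces $s:=-\log a\in[0,d]$ and $b-1\le e^{d-s}-1$, your extremal computation still dominates the actual $\min(1-a,b-1)$, so the bound $\tanh(d/2)\le d/2$ goes through unchanged.
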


\begin{proof}
We renormalize $m$ so that $\|m\|=K$. For all $u \in V^*$, $|m(u)|\geq \frac{1}{K} \|m\|.\|u\|>0$, therefore $m_{|V}:V\rightarrow \mathbb{C}^p$ is injective, and therefore bijective. Similarly, $m_{|W}:W\rightarrow \mathbb{C}^p$ is bijective.
Let $x \in V^*$. Then there exists $y \in W^*$ such that $m(x)=m(y)\neq 0$. In particular $m(x)$ and $m(y)$ are colinear, which will allow us to follow the proofs of \cite{Rug10} and \cite{Dub09}. Let $x'=\frac{x}{|m(x)|}$ and $y'=\frac{y}{|m(y)|}$.

\emph{Case 1: with the gauge from \cite{Rug10}}\\
Let $u_\lambda = (1+\lambda)x'+(1-\lambda)y'$ pour $\lambda \in \mathbb{C}$. Then $|m(u_\lambda)|=|(1+\lambda)\frac{m(x)}{|m(x)|}+(1-\lambda)\frac{m(x)}{|m(x)|}|=2$. When $u_\lambda \in \mathcal{C}$, we get (using Definition \ref{aperture-def}):

$$\lambda\|x'-y'\| \leq \|u_\lambda\| + (\|x'\|+\|y'\|) \leq 2 + 1 + 1 \leq 4$$
Following the exact proof of Lemma 3.4 in \cite{Rug10}, we get the desired results.

\emph{Case 2: with the gauge from \cite{Dub09}}\\
This is the exact same proof and calculations as in \cite{Dub09}.
\end{proof}

We now extend this result on representatives of $p$-dimensional spaces in $\bigwedge\nolimits^p E$.

\begin{defi}
If $V$ is a complex Banach space and $m:V \rightarrow \mathbb{C}^p$ is a continuous linear map, we consider the map $\hat{m}: \bigwedge\nolimits^pV \rightarrow \mathbb{C}^p$ obtained from the alternating $p$-linear map $(x_1,\dots,x_p) \mapsto \det_{\mathbb{C}^P}(m(x_1),\dots,m(x_p))$.
\end{defi}

\begin{lemma}
\label{aperture-2}
Let $\mathcal{C} \subset E$ be a $p$-cone of aperture bounded by $K>0$. Let $V,W$ be $p$-dimensional spaces contained in $\mathcal{C}$ and $m=m_{V,W}$ a linear map satisfying the inequality from Definition \ref{aperture-def}.
If $(x_1,\dots,x_p)$ is a basis of $V$ and  $(y_1,\dots,y_p)$ is a basis of $W$, then
$$\left\| \dfrac {x_{1}\wedge \ldots \wedge x_{p}} {\hat {m}(x_{1}\wedge \ldots \wedge x_{p}) } - \dfrac {y_{1}\wedge \ldots \wedge y_{p}} {\hat {m}(y_{1}\wedge \ldots \wedge y_{p}) } \right\| \leq p.p! \dfrac{K^p}{\|m\|^p}d_{\mathcal{C}}(V,W)$$
\end{lemma}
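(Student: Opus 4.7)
My plan hinges on the observation that the vector $(x_1\wedge\cdots\wedge x_p)/\hat m(x_1\wedge\cdots\wedge x_p)\in\bigwedge\nolimits^pV$ does not depend on the chosen basis of $V$: a change of basis multiplies both the numerator and the denominator by the same determinant. The same invariance holds on the $W$ side. So I am free to pick whichever bases simplify the computation, and the natural choice is to align both bases with the canonical basis of $\mathbb{C}^p$ via $m$.

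First I would note that the aperture bound forces $m|_V$ to be injective (its kernel lies in $\mathcal{C}$, hence is $\{0\}$) and therefore an isomorphism onto $\mathbb{C}^p$; the same holds for $m|_W$. Define $(x_1,\dots,x_p)$ as the preimage under $m|_V$ of the canonical basis $(e_1,\dots,e_p)$ of $\mathbb{C}^p$. Applying Lemma \ref{aperture-1} (in its sharper ``even'' form) to each $x_i$ yields $y_i\in W$ with $m(y_i)=m(x_i)=e_i$ and
$$\|x_i-y_i\|\leq\frac{K}{\|m\|}|m(x_i)|d_{\mathcal{C}}(V,W)=\frac{K}{\|m\|}d_{\mathcal{C}}(V,W);$$
these $(y_i)$ form a basis of $W$ because $m|_W$ is an isomorphism. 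With this alignment, $\hat m(x_1\wedge\cdots\wedge x_p)=\det(e_1,\dots,e_p)=1=\hat m(y_1\wedge\cdots\wedge y_p)$, so the left-hand side of the target inequality collapses to the single quantity $\|x_1\wedge\cdots\wedge x_p-y_1\wedge\cdots\wedge y_p\|_{\wedge,p}$.

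The bound then follows from the telescoping identity
$$x_1\wedge\cdots\wedge x_p - y_1\wedge\cdots\wedge y_p = \sum_{i=1}^p y_1\wedge\cdots\wedge y_{i-1}\wedge(x_i-y_i)\wedge x_{i+1}\wedge\cdots\wedge x_p,$$
combined with an iteration of Lemma \ref{p->p+1}, which produces $\|u_1\wedge\cdots\wedge u_p\|_{\wedge,p}\leq p!\,\|u_1\|\cdots\|u_p\|$ (this is the worst of the two norms; the $\|\cdot\|_{\wedge2,p}$ estimate even removes the $p!$ factor). The aperture bound also forces $\|x_i\|,\|y_i\|\leq K/\|m\|$ since $|m(x_i)|=|m(y_i)|=1$. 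Each of the $p$ summands is thus bounded by $p!(K/\|m\|)^{p-1}\cdot(K/\|m\|)\,d_{\mathcal{C}}(V,W)$, and summing yields the desired $p\cdot p!(K/\|m\|)^p\,d_{\mathcal{C}}(V,W)$.

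The only non-routine step is the initial basis-invariance remark that permits the simultaneous normalization $\hat m(\text{basis})=1$ on both sides; without it, a direct telescoping would leave the ratio $|m(x_1)|\cdots|m(x_p)|/|\det(m(x_1),\dots,m(x_p))|$ as an uncontrolled factor (by Hadamard it only admits an upper bound of $1$ on the \emph{denominator}-side, which goes the wrong way). Once the normalization is in place, everything afterwards is straightforward exterior-algebra bookkeeping.
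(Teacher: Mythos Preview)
Your proof is correct and follows essentially the same approach as the paper: both exploit the basis-invariance of the normalized wedge to choose bases $(x_i)$, $(y_i)$ with $m(x_i)=m(y_i)=e_i$ via Lemma~\ref{aperture-1}, then bound $\|x_1\wedge\cdots\wedge x_p - y_1\wedge\cdots\wedge y_p\|$ by telescoping and using $\|x_i\|,\|y_i\|\le K/\|m\|$. The only cosmetic difference is that the paper writes the telescoping as a two-term recursion while you write the full sum at once.
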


\begin{proof}
Given two basis $(x_1,\dots,x_p)$ and $(x_1',\dots,x_p')$ of V, $x_{1}\wedge \ldots \wedge x_{p}$ and $x_{1}' \wedge \ldots \wedge x_{p}'$ are colinear (as it is for two basis of $W$). Hence we only have to show the result for one particular basis of $V$ and one particular basis of $W$.

Let $(e_1,\dots,e_p)$ be the canonical basis of $\mathbb{C}^p$. As in the proof of Lemma \ref{aperture-1}, $m_{|V}:V \rightarrow \mathbb{C}^p$ is bijective. Then there exists a basis $(x_1,\dots,x_p)$ of $V$ such that $(m(x_1),\dots,m(x_p))=(e_1,\dots,e_p)$. As in Lemma \ref{aperture-1}, we choose $(y_1,\dots,y_p)$ in $W$ such that $m(x_i)=m(y_i)=e_i$ for $1\leq i \leq p$. In particular $(y_1,\dots,y_p)$ is a basis of $W$. Note that $|m(x_i)|=|m(y_i)|=1$ for all $i$, and $\hat{m}(x_1,\dots,x_p)=\hat{m}(y_1,\dots,y_p)=1$. Using Lemma \ref{aperture-1}, we get: for all $i$, $\left\| x_i- y_i\right\| \leq \dfrac {K} {\|m\|}d_{\mathcal{C}}(V,W)$. Using this inequality and the fact that $\|x_i\| \leq \frac{K}{\|m\|}$, we get:

$$
\begin{array}{rcl}
\left\| x_{1}\wedge \ldots \wedge x_{p} - y_{1}\wedge \ldots \wedge y_{p} \right\| &\leq & \|(x_1 - y_1) \wedge x_2 \wedge \dots \wedge x_p\| +  \|y_1 \wedge (x_2 \wedge \dots \wedge x_p - y_2 \wedge \dots \wedge y_p)\| \\
&\leq &p\|(x_1 - y_1) \| \| x_2 \wedge \dots \wedge x_p\| + p \|y_1 \| \|x_2 \wedge \dots \wedge x_p - y_2 \wedge \dots \wedge y_p\| \\
&\leq & p \dfrac{K}{\|m\|} d_{\mathcal{C}}(V,W) . (p-1)! \dfrac{K^{p-1}}{\|m\|^{p-1}} +p \dfrac{K}{\|m\|} \|x_2 \wedge \dots \wedge x_p - y_2 \wedge \dots \wedge y_p\| \\
&\leq &p! \dfrac{K^p}{\|m\|^p} d_{\mathcal{C}}(V,W) + p\dfrac{K}{\|m\|} \|x_2 \wedge \dots \wedge x_p - y_2 \wedge \dots \wedge y_p\|

\end{array}
$$

The desired result follows from a simple recursion.
\end{proof}

\begin{ex}
Let $F$ be a $p$-dimensional subspace of $E$, and $G$ a closed supplement of $F$ in $E$. Let $\pi:E \rightarrow F$ be the projection on $F$ parallel to $G$ and $a>0$. Then the cone $\mathcal{C}_{\pi,a}$ is of bounded aperture. Indeed, let $\phi:F \rightarrow \mathbb{C}^p$ be an isomorphism and define the continuous linear map $m:E \rightarrow \mathbb{C}^p$ as $m=\phi \circ \pi$. For $x=x_F+x_G \in \mathcal{C}_{\pi,a}$, $|m(x)|=|\phi(x_F)|\geq \|\phi^{-1}\|^{-1} \|x_F\| \geq \frac{\|\phi^{-1}\|^{-1}}{1+a} \|x\|$. 
\end{ex}

\begin{rem}
In the previous example, the isomorphism $\phi$ can be choosen such that $\|\phi\|$ and $\|\phi^{-1}\|$ only depends on the dimension $p$.
\end{rem}
\begin{prop}
\label{fixed-space}
Let $\mathcal{C} \subset E$ be a $p$-cone of aperture bounded by $K>0$. Le $T \in \mathcal{L}(E)$ such that $T\mathcal{C}^* \subset \mathcal{C}^*$ and diam${}_{\mathcal{C}}(T\mathcal{C})=\Delta< \infty$. Let $\eta<1$ be the contraction coefficient given by Proposition \ref{contraction}. Then:

\begin{enumerate}[(i)]
\item There exists a unique $p$-dimensional subspace $V$ contained in $\mathcal{C}$ fixed by $T$. Given a representative $h \in \bigwedge\nolimits^pE$ of $V$ with norm $1$, $h$ is an eigenvector for $\hat{T}$ for some eigenvalue $\lambda \in \mathbb{C}^*$ : $\hat{T}h=\lambda h$.
\item There exist constants $R,C < \infty$ and a map $b: \{ x_1 \wedge \dots \wedge x_p ~ |~ \textnormal{Span}(x_1,\dots,x_p) \subset \mathcal{C} \} \rightarrow \mathbb{C}$ such that for all $(x_1,\dots,x_p)$ with $\textnormal{Span}(x_1,\dots,x_p) \subset \mathcal{C}$ and all $n \in \mathbb{N}^*$,
$$ \|\lambda^{-n}\hat{T}(x_1 \wedge \dots \wedge x_p) - c(x_1 \wedge \dots \wedge x_p).h\| \leq C \eta^n \|x_1 \wedge \dots \wedge x_p\|,$$
$$|c(x_1 \wedge \dots \wedge x_p)| \leq R \|x_1 \wedge \dots \wedge x_p\|.$$

\end{enumerate}
\end{prop}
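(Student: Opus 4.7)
The plan is to establish the fixed $p$-dimensional subspace by a Banach-type fixed-point argument carried out directly in $\bigwedge\nolimits^p E$, and then to track carefully the scalar normalisation needed to convert the resulting convergence of subspaces into the multiplicative statement in (ii).

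For (i), I fix once and for all a continuous linear map $m:E\rightarrow\mathbb{C}^p$ witnessing the bounded aperture of $\mathcal{C}$. On any $p$-dimensional subspace $U\subset\mathcal{C}$ the aperture inequality $\|m\|\,\|u\|\leq K|m(u)|$ forces $m|_U$ to be a bijection onto $\mathbb{C}^p$, so $\hat{m}$ never vanishes on decomposable tensors representing such $U$. Starting from any $V_0\subset\mathcal{C}$ of dimension $p$ and iterating $V_n=T^nV_0$, Proposition \ref{contraction} combined with $V_1,V_2\subset T\mathcal{C}$ gives $d_{\mathcal{C}}(V_{n+1},V_n)\leq\Delta\eta^{n-1}$. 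For each $n$ I would pick the basis $(x_1^{(n)},\dots,x_p^{(n)})$ of $V_n$ characterised by $m(x_i^{(n)})=e_i$ and set $h_n=x_1^{(n)}\wedge\cdots\wedge x_p^{(n)}$, so that $\hat{m}(h_n)=1$. Lemma \ref{aperture-2} then yields the geometrically summable estimate $\|h_{n+1}-h_n\|_{\wedge,p}\leq p\,p!\,K^p\|m\|^{-p}\Delta\eta^{n-1}$. Hence $(h_n)$ is Cauchy, and Proposition \ref{cv exterieur} produces a decomposable limit $h_\ast=y_1\wedge\cdots\wedge y_p$ with $\hat{m}(h_\ast)=1$, so $h_\ast\neq 0$, and $V:=\Span(y_1,\dots,y_p)\subset\mathcal{C}$ by closedness. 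Passing to the limit in the identity $h_{n+1}=\hat{T}h_n/\hat{m}(\hat{T}h_n)$ (both sides represent $V_{n+1}$ and have $\hat{m}$-value $1$) gives $\hat{T}h_\ast=\lambda h_\ast$ with $\lambda=\hat{m}(\hat{T}h_\ast)$, and $\lambda\neq 0$ because $T$ is injective on $V$ (any nonzero $v\in V\subset\mathcal{C}^\ast$ gives $Tv\in\mathcal{C}^\ast$). Uniqueness is the usual contraction argument: any second fixed subspace $V'$ lies in $T\mathcal{C}$, so $d_{\mathcal{C}}(V,V')\leq\Delta<\infty$, and $d_{\mathcal{C}}(V,V')\leq\eta d_{\mathcal{C}}(V,V')$ forces $d_{\mathcal{C}}(V,V')=0$, hence $V=V'$ by the Remark following the definition of $d_{\mathcal{C}}$. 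I then take $h=h_\ast/\|h_\ast\|_{\wedge,p}$.

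For (ii), I treat $\omega=x_1\wedge\cdots\wedge x_p$ with $W:=\Span(x_1,\dots,x_p)\subset\mathcal{C}$ (the linearly dependent case being trivial) by introducing the normalised representatives $\tilde{\omega}_n:=\hat{T}^n\omega/\hat{m}(\hat{T}^n\omega)$ of $T^nW$, which satisfy $\hat{m}(\tilde{\omega}_n)=1$. Lemma \ref{aperture-2} combined with $d_{\mathcal{C}}(V,T^nW)\leq\Delta\eta^{n-1}$ delivers $\|\tilde{\omega}_n-h_\ast\|_{\wedge,p}\leq C_0\eta^{n-1}$ with $h_\ast=h/\hat{m}(h)$ and $C_0$ independent of $\omega$. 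The key algebraic step is then to introduce the scalar $\alpha_n:=\lambda^{-n}\hat{m}(\hat{T}^n\omega)$, so that $\lambda^{-n}\hat{T}^n\omega=\alpha_n\tilde{\omega}_n$, and to observe, using $\hat{T}h_\ast=\lambda h_\ast$, that
\[
\alpha_{n+1}/\alpha_n-1 = \lambda^{-1}\hat{m}\bigl(\hat{T}(\tilde{\omega}_n-h_\ast)\bigr), \qquad |\alpha_{n+1}/\alpha_n-1|\leq C_1\eta^n,
\]
with $C_1=C_0\|\hat{m}\|\|\hat{T}\|/(|\lambda|\eta)$ independent of $\omega$. This forces the telescoping product $\alpha_n/\alpha_0=\prod_{k<n}(\alpha_{k+1}/\alpha_k)$ to converge absolutely, yields a uniform bound $|\alpha_n|\leq M|\hat{m}(\omega)|$, and gives $|\alpha_n-\alpha_\infty|\leq C_2\eta^n|\hat{m}(\omega)|$. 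I set $c(\omega):=\alpha_\infty/\hat{m}(h)$, so that $c(\omega)h=\alpha_\infty h_\ast$, and the decomposition
\[
\lambda^{-n}\hat{T}^n\omega - c(\omega)h = \alpha_n(\tilde{\omega}_n-h_\ast) + (\alpha_n-\alpha_\infty)h_\ast,
\]
combined with $|\hat{m}(\omega)|\leq\|\hat{m}\|\,\|\omega\|_{\wedge,p}$, delivers both the exponential estimate with constant proportional to $\|\omega\|_{\wedge,p}$ and the bound $|c(\omega)|\leq R\|\omega\|_{\wedge,p}$.

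The main obstacle is the scalar bookkeeping needed to keep all constants uniform in $\omega$: every $\omega$-dependent quantity must factor linearly through $\hat{m}(\omega)$, which is controlled by $\|\hat{m}\|\,\|\omega\|_{\wedge,p}$. The saving feature is that the speed at which $\tilde{\omega}_n$ approaches $h_\ast$ is governed entirely by the intrinsic cone contraction via Lemma \ref{aperture-2} and is completely independent of $\omega$, which is exactly what makes the multiplicative corrections $\alpha_{n+1}/\alpha_n-1$ decay geometrically at a rate uniform in $\omega$, and forces the convergence of the infinite product to be uniform as well.
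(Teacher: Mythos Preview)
Your proof is correct and follows essentially the same strategy as the paper: iterate $V_n=T^nV_0$, use the contraction principle to get $d_{\mathcal{C}}(V_n,V_{n+1})\leq\Delta\eta^{n-1}$, transfer this to a Cauchy estimate on decomposable tensors via Lemma~\ref{aperture-2}, and appeal to Proposition~\ref{cv exterieur} for the limit. The difference lies in the choice of normalisation. The paper works with representatives of unit norm and, at each step, invokes a (possibly different) sectional-aperture functional $m_n$ adapted to the pair $(V_n,V_{n+1})$, then tracks the scalars $\lambda_n=\hat{m}_n(\hat{T}\hat{x}_n)/\hat{m}_n(\hat{x}_n)$ separately to identify $\lambda$. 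You exploit the stronger hypothesis of (global) bounded aperture to fix a single $m$ once and for all and normalise by $\hat{m}=1$; this makes the functional relation $h_{n+1}=\hat{T}h_n/\hat{m}(\hat{T}h_n)$ an exact identity and lets you read off $\lambda=\hat{m}(\hat{T}h_\ast)$ directly, without the auxiliary sequence $\lambda_n$. This is a genuine simplification. For part (ii) the paper simply refers to the computations in \cite{Rug10}, whereas you carry them out explicitly via the scalars $\alpha_n=\lambda^{-n}\hat{m}(\hat{T}^n\omega)$ and the telescoping product; this is precisely the Rugh scheme, and your observation that every $\omega$-dependent constant factors through $|\hat{m}(\omega)|\leq\|\hat{m}\|\,\|\omega\|_{\wedge,p}$ is exactly the uniformity the statement requires.
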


\begin{proof}
\begin{enumerate}[(i)]
\item
Let $V_0=\textnormal{Span}(x_1^0,\dots,x_p^0)$ be a $p$-dimensional subspace contained in $\mathcal{C}$ and $\hat{x_0}=x_1^0 \wedge \dots \wedge x_p^0/\|x_1^0 \wedge \dots \wedge x_p^0\|$. Let $V_n=T^nV_0$. We construct recursively a sequence $(\hat{x}_n)$: given $\hat{x_n}$, Definition \ref{aperture-def} gives us a functional $m_n$ of norm $K$ adapted with $V_n$ and $V_{n+1}$. Let $\lambda_n = \hat{m_n}(\hat{T}\hat{x_n})/\hat{m_n}(\hat{x_n}) \in ]0, \|T\|^pK^p]$. We define:
$$\hat{x_{n+1}} = \dfrac{\lambda_n^{-1}\hat{T}\hat{x_n}}{\|\lambda_n^{-1}\hat{T}\hat{x_n}\|} \in \textnormal{Span}(\hat{T}^n\hat{x_0})$$
It is a representative of $V_{n+1}$ in $\bigwedge\nolimits^pE$. By Lemma \ref{aperture-2}, we have:
$$\left\| \dfrac{\hat{x_n}}{\hat{m_n}(\hat{x_n})} - \dfrac{\hat{T}\hat{x_n}}{\hat{m_n}(\hat{T}\hat{x_n})} \right\| \leq p.p! d_{\mathcal{C}}(V_n,V_{n+1}) \leq p.p! \Delta \eta^{n-1}$$

As $|\hat{m_n}(\hat{x_n})| \leq K^p$, we get $\|\hat{x_n} - \lambda_n^{-1}\hat{T}\hat{x_n}\| \leq p.p! \Delta K^p \eta^{n-1}$. Combined with the fact that $\|\hat{x_n}\|=1$, we get $\|\hat{x_n} - \hat{x_{n+1}}\| \leq 2p.p! \Delta K^p \eta^{n-1}$. Thus $(\hat{x_n})$ is Cauchy and so is $(V_{n})$, and by Proposition \ref{cv exterieur}, $\hat{x_n}\underset{n \rightarrow \infty}{\rightarrow} h$ and $ V_n \underset{n \rightarrow \infty}{\rightarrow} V$ contained in $\mathcal{C}$ as $\mathcal{C}$ is closed.

There are two different methods to prove that $V$ is a fixed space:

\emph{Method 1.} $|\lambda_n - \lambda_{n+1}| \leq (\hat{T}-\lambda_n)\hat{x_n}+ (\lambda_{n+1}-\hat{T})\hat{x_{n+1}}+(\hat{T}-\lambda_n)(\hat{x_{n+1}}-\hat{x_n}).$
Given our inequalities and the fact that $\lambda_n \leq \|T\|^pK^p$, we get 
$$|\lambda_n - \lambda_{n+1}| \leq (K^p+\eta K^p+(2+2K^p)).p.p!\Delta \|T\|pK^{p} \eta^{n-1}$$
so that $\lambda_n \underset{n \rightarrow \infty}{\rightarrow} \lambda \in \mathbb{C}$. As $|\hat{x_n} - \lambda_n^{-1}\hat{T}\hat{x_n}\| \underset{n \rightarrow \infty}{\rightarrow} 0$, we get $\|\lambda h - \hat{T}h\|=0$, hence $\hat{T}h=\lambda h$. As $T\mathcal{C}^* \subset \mathcal{C}^*$, this proves $\lambda \neq 0$ and $V$ is fixed by $T$.

\emph{Method 2.} Let $x \in V$. There exist $x_n \in V_n$ such that $x_n  \underset{n \rightarrow \infty}{\rightarrow} x$, and this sequence is bounded by a $M>0$. Then $Tx_n  \underset{n \rightarrow \infty}{\rightarrow} Tx$ and therefore
$$d(Tx,V) \leq d(Tx,Tx_n)+d(Tx_n,V) \leq d(Tx,Tx_n)+\|Tx_n\|d_H(V_n,V) \leq \underbrace{d(Tx,Tx_n)}_{\underset{n \rightarrow \infty}{\rightarrow} 0} +\|T\|M \underbrace{d_H(V_n,V)}_{\underset{n \rightarrow \infty}{\rightarrow} 0}$$
Thus $d(Tx,V)=0$ and $Tx \in V$, proving that $V$ is fixed by $T$.

The space $V$ is unique: if $W$ is such that $TW = W$, then $d_{\mathcal{C}}(V,W)=d_{\mathcal{C}}(TV,TW) \leq \eta d_{\mathcal{C}}(V,W)$, hence $d_{\mathcal{C}}(V,W)=0$ and $V=W$.

\item The proof of the second part of our proposition follows the method from \cite{Rug10}. It is the same computations, obtained by replacing the '$x_n$' in \cite{Rug10} with $\hat{x_n}=T^nx^1 \wedge \dots \wedge T^nx^p$.
\end{enumerate}
\end{proof}

We need a final regularity assumption on our cone $\mathcal{C}$ to state the spectral gap theorem. Our cone must be $p$-reproducing, which means it must give us information about all $p$-dimensional directions.

\begin{defi}
\label{reproducing}
We say that a $p$-cone $\mathcal{C} \subset E$ is $p$-reproducing if there exists $k \in \mathbb{N}^*$ and $A>0$ such that for all $(x_1,\dots,x_p) \in E^p$, there exists $((x_1^i,\dots,x_p^i))_{1\leq i \leq k}$ such that
\begin{align*}
i \in \{1,\dots,k\}, \textnormal{Span}(x_1^i,\dots,x_p^i) \subset \mathcal{C} \\
x_1 \wedge \dots \wedge x_p = \sum\limits_{i=1}\limits^{k} x_1^i \wedge \dots \wedge x_p^i \\
\sum\limits_{i=1}\limits^{k} \|x_1^i \wedge \dots \wedge x_p^i \| \leq A \| x_1 \wedge \dots\wedge x_p\| 
\end{align*} 
\end{defi}

\begin{ex}
The cone $\mathcal{C}_{\pi,a}$ is reproducing. Indeed, let $(x_1,\dots,x_p) \in E^p$ be a right decomposition and let $x_k=x_k^F+x_k^G$ be the decomposition of $x^i$ in $E=F \oplus G$. Then 
$$x_1 \wedge \dots \wedge x_p = \sum\limits_{(a_1,\dots,a_p) \in \{F,G\}^p} x_1^{a_1} \wedge \dots \wedge x_p^{a_p}.$$
Let's estimate the terms $x_1^{i_1} \wedge \dots \wedge x_p^{i_p}$. Up to a change of sign, such a term can be written as \\ $x_{i_1}^{F} \wedge \dots \wedge x_{i_k}^F\wedge x_{j_1}^{G} \wedge \dots \wedge x_{j_{p-k}}^G$. Let's prove by induction  the following: \\ for all $n \in \{0,\dots,p\}$, $x_{i_1}^{F} \wedge \dots \wedge x_{i_k}^F\wedge x_{j_1}^{G} \wedge \dots \wedge x_{j_{n}}^G$ can be decomposed into a sum of $2^n$ decomposable tensors $\alpha_1, \dots, \alpha_{2^n}$ such that the subspaces represented by the $\alpha_i$ are in $\mathcal{C}_{\pi,\frac{a}{2^{p-n}}}$ and $\|\alpha_i\| \leq C_n \|x_{i_1}\| \dots \| x_{i_k}\| \|x_{j_1} \| \dots \|x_{j_{n}}\|$ for some constant $C_n$. \\

The result is obviously true for $n=0$. Let's assume the result is true for $n  \in \{0,\dots,p\}$. Then $x_{i_1}^{F} \wedge \dots \wedge x_{i_k}^F\wedge x_{j_1}^{G} \wedge \dots \wedge x_{j_{n+1}}^G=\underset{i=0}{\overset{2^n}{\sum}}\alpha_i \wedge  x_{j_{n+1}}^G$. We are going to decompose $\alpha_i \wedge  x_{j_{n+1}}^G$ into 2 decomposable tensors. Let $W_i \subset \mathcal{C}_{\pi,\frac{a}{2^{p-n}}}$ be the $i$-dimensional space represented by $\alpha_i$. There exists a $p$-dimensional space $W \subset \mathcal{C}_{\pi,\frac{a}{2^{p-n}}}$ such that $W_i \subset W$. Let $y \in W$ with $\|y\|=1$ such that $d_H(W_i,\textnormal{Span}(y)) \geq 1/2$. Then 
$$\alpha_i \wedge  x_{j_{n+1}}^G=\alpha_i \wedge  (x_{j_{n+1}}^G-\frac{2^{p-n}}{a}\|x_{j_{n+1}}^G\|y)+\alpha_i \wedge  \frac{2^{p-n}}{a}\|x_{j_{n+1}}^G\|y.$$
The $(p+1)$-dimensional space represented by the second term is a subset of $W \subset \mathcal{C}_{\pi,\frac{a}{2^{p-n}}}$ and 
$$\begin{array}{rcl}
\left\|\alpha_i \wedge  \frac{2^{p-n}}{a}\|x_{j_{n+1}}\|y\right\| &\leq  &\|\alpha_i\| .\frac{2^{p-n}}{a}\|x_{j_{n+1}}^G\| \leq  C_n \|x_{i_1}\| \dots \| x_{i_k}\| \|x_{j_1} \| \dots \|x_{j_{n}}\|. \frac{2^{p-n}}{a}\|x_{j_{n+1}}^G\| \\
 &\leq &\frac{2^{p-n}}{a}C_n \|\pi_{F//G}\|\|x_{i_1}\| \dots \| x_{i_k}\| \|x_{j_1} \| \dots \|x_{j_{n}}\| \|x_{j_{n+1}}\|. \end{array}$$
Let's now show that the first term $\alpha_i \wedge  (x_{j_{n+1}}^G-2a\|x_{j_{n+1}}^G\|y)$ represents a subspace contained in $\mathcal{C}_{\pi,\frac{a}{2^{p-n-1}}}$. Let's $u=u_i+ \lambda  (x_{j_{n+1}}^G-\frac{2^{p-n}}{a}\|x_{j_{n+1}}^G\|y)$ in this subspace, with $u_i \in W_i$. As  $W_i \subset \mathcal{C}_{\pi,\frac{a}{2^{p-n}}}$, $u_i=u_i^F+u_i^G$ with $\frac{a}{2^{p-n}} \|u_i^F\| \geq \|u_i^G\|$. Then 
$$u=u_i+ \lambda  (x_{j_{n+1}}^G-\frac{2^{p-n}}{a}\|x_{j_{n+1}}^G\|y) = (u_i^F-\frac{2^{p-n}}{a}\lambda \|x_{j_{n+1}}^G\|y)+ (u_i^G+\lambda x_{j_{n+1}}^G).$$
Let's estimate the norms of the last two terms. As $d_H(W_i,\textnormal{Span}(y)) \geq 1/2$
$$\left\|u_i^F-\frac{2^{p-n}}{a}\lambda \|x_{j_{n+1}}^G\|y \right\| \geq \|u_i^F\| \textnormal{ and } \left \|u_i^F-\frac{2^{p-n}}{a}\lambda \|x_{j_{n+1}}^G\|y \right\| \geq \frac{2^{p-n}}{a} |\lambda| \|x_{j_{n+1}}^G\|,$$
Therefore$$\left\|u_i^F-\frac{2^{p-n}}{a}\lambda \|x_{j_{n+1}}^G\|y \right\| \geq \frac{1}{2}\|u_i^F\|+ \frac{1}{2}\frac{2^{p-n}}{a} |\lambda| \|x_{j_{n+1}}^G\|.$$
On the other hand, 
$$\left\|u_i^G+\lambda x_{j_{n+1}}^G\right\| \leq \left\|u_i^G\right\|  +|\lambda| \| x_{j_{n+1}}^G\| \leq \frac{2^{p-n}}{a}\|u_i^F\| + |\lambda| \| x_{j_{n+1}}^G\|\leq  \frac{2^{p-n-1}}{a} \left\|u_i^F-\frac{2^{p-n}}{a}\lambda \|x_{j_{n+1}}^G\|y \right\|. $$
That shows that $u \in \mathcal{C}_{\pi,a/2^{p-n-1}}$ and thus $\alpha_i \wedge  (x_{j_{n+1}}^G-2a\|x_{j_{n+1}}^G\|y)$ represents a subspace contained in $\mathcal{C}_{\pi,a/2^{p-n-1}}$. A computation similar to the first case show that 
$$\left\|\alpha_i \wedge  \|x_{j_{n+1}}\|y\right\| \leq(1+\frac{2^{p-n}}{a})C_n \|\pi_{F//G}\|\|x_{i_1}\| \dots \| x_{i_k}\| \|x_{j_1} \| \dots \|x_{j_{n}}\| \|x_{j_{n+1}}\|.$$

To conclude, our induction for $n=p-k$ maximal shows that we can decompose $$x_{i_1}^{F} \wedge \dots \wedge x_{i_k}^F\wedge x_{j_1}^{G} \wedge \dots \wedge x_{j_{p-k}}^G=\underset{i=0}{\overset{2^{p-k}}{\sum}}\alpha_i$$ where the $\alpha_i$ represent subspaces of $\mathcal{C}_{\pi,a}$ and $\|\alpha_i\| \leq C  \|x_1\| \dots \| x_p\| $ for some constant $C$ (depending on $p$, $\pi$ and $a$). As we had 
$$x_1 \wedge \dots \wedge x_p = \sum\limits_{(a_1,\dots,a_p) \in \{F,G\}^p} x_1^{a_1} \wedge \dots \wedge x_p^{a_p},$$
$x_1 \wedge \dots \wedge x_p$ can be decomposed in a giant sum of such $\alpha_i$. As we can choose $(x_1 , \dots ,x_p)$ to be a right decomposition without changing $x_1 \wedge \dots \wedge x_p$, we get $x_1 \wedge \dots \wedge x_p=\underset{i \in I}{\sum}\alpha_i$ (with $|I| \leq 4^p$) where $\alpha_i$ represent subspaces of $\mathcal{C}_{\pi,a}$ and $$\underset{i \in I}{\sum}\|\alpha_i\| \leq |I| C  \|x_1\| \dots \| x_p\| \leq 4^p C p!  \|x_1\wedge \dots \wedge x_p\|.$$

This proves that the cone $\mathcal{C}_{\pi,a}$ is reproducing. 
\end{ex}
\begin{theorem}
Let $\mathcal{C} \subset E$ be a reproducing $p$-cone of aperture bounded by $K>0$. Le $T \in \mathcal{L}(E)$ such that $T\mathcal{C}^* \subset \mathcal{C}^*$ and diam${}_{\mathcal{C}}(T\mathcal{C})=\Delta< \infty$. Let $\eta<1$ be the contraction coefficient given by Proposition \ref{contraction}. Then $T$ has a spectral gap, i.e. there exists a $p$-dimensional subspace $V$ and a closed supplement $W$ of $V$, both stable by $T$, such that $\sup Sp(T_{|W}) \leq \eta. \inf Sp(T_{|V})$.
\end{theorem}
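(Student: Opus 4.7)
The strategy is to extend the coefficient map $c$ from Proposition~\ref{fixed-space}(ii) to a continuous linear form $\tilde c:\bigwedge\nolimits^pE\to\mathbb{C}$ that acts as a left-eigenfunctional for $\hat T$, use it to produce a $T$-equivariant projection onto $V$, and then extract the spectral rate from the decay estimate in Proposition~\ref{fixed-space}(ii) applied to appropriately chosen tensors.

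First, invoke Proposition~\ref{fixed-space} to obtain the fixed subspace $V\subset\mathcal{C}$, the eigenvalue $\lambda\neq 0$ and normalized eigenvector $h\in\bigwedge\nolimits^pV$ of $\hat T$, and the map $c$ defined on decomposable tensors whose span lies in $\mathcal{C}$. By the $p$-reproducing assumption, every decomposable $\alpha=x_1\wedge\cdots\wedge x_p$ decomposes as $\alpha=\sum\alpha_i$ with each $\alpha_i$ representing a subspace in $\mathcal{C}$ and $\sum\|\alpha_i\|\leq A\|\alpha\|$. Set $\tilde c(\alpha)=\sum c(\alpha_i)$. Well-definedness follows from the convergence $\lambda^{-n}\hat T^n\alpha\to\tilde c(\alpha)h$ guaranteed by Proposition~\ref{fixed-space}(ii), together with uniqueness of the limit; continuity $|\tilde c|\leq RA\|\cdot\|$ comes from the bound on $|c(\alpha_i)|$. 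Extending by linearity produces a continuous $\tilde c$ on $\bigwedge\nolimits^pE$ satisfying $\tilde c(h)=1$ and $\tilde c\circ\hat T=\lambda\tilde c$.

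Choose a basis $(e_1,\ldots,e_p)$ of $V$ with $e_1\wedge\cdots\wedge e_p=h$, and define $\phi_k\in E'$ by $\phi_k(x)=\tilde c(e_1\wedge\cdots\wedge e_{k-1}\wedge x\wedge e_{k+1}\wedge\cdots\wedge e_p)$. Then $\phi_k(e_j)=\delta_{jk}$, so $\pi(x)=\sum_k\phi_k(x)e_k$ is a continuous projection onto $V$ and $W:=\ker\pi=\bigcap_k\ker\phi_k$ is a closed supplement. For $T$-stability: since $B:=T|_V$ is invertible with $\det B=\lambda$, write $e_j=Tf_j$ with $f_j\in V$; then $e_1\wedge\cdots\wedge Tx\wedge\cdots\wedge e_p=\hat T(f_1\wedge\cdots\wedge x\wedge\cdots\wedge f_p)$, and the eigenfunctional identity gives $\phi_k(Tx)=\lambda\,\tilde c(f_1\wedge\cdots\wedge x\wedge\cdots\wedge f_p)$. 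A multilinear expansion in the basis $(e_i)$ followed by Cramer's rule rewrites this as $\sum_j B_{kj}\phi_j(x)$; equivalently, the map $\Phi=(\phi_1,\ldots,\phi_p)$ intertwines $T$ with $B$, so $\pi T=T\pi$ and $T(W)\subset W$.

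For the spectral bound, let $\mu_{\min}$ realize $\inf Sp(T|_V)=|\mu_{\min}|$, and let $\hat f\in\bigwedge\nolimits^{p-1}V$ be an eigenvector of $\bigwedge\nolimits^{p-1}B$ with eigenvalue $\lambda/\mu_{\min}$, of largest modulus (in the non-semisimple case I would use a top generalized eigenvector, at the cost of an inessential polynomial factor). For $x\in W$, apply reproducing to $x\wedge\hat f$ and Proposition~\ref{fixed-space}(ii):
\[ \bigl\|\lambda^{-n}\hat T^n(x\wedge\hat f)-\tilde c(x\wedge\hat f)\,h\bigr\|\leq C\eta^n\|x\wedge\hat f\|. \]
Expanding $\tilde c(x\wedge\hat f)$ as a linear combination of $\phi_k(x)$ shows it vanishes on $W$, while $\hat T^n(x\wedge\hat f)=(\lambda/\mu_{\min})^n\,T^nx\wedge\hat f$, so rearranging gives $\|T^n x\wedge\hat f\|\leq C|\mu_{\min}|^n\eta^n\|x\wedge\hat f\|$. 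A Hahn--Banach lower bound $\|y\wedge\hat f\|\geq c'\|y\|$ for $y\in W$ (constructed using a form $l\in E'$ with $l|_V=0$, $l(y)=\|y\|$, $\|l\|\leq\|\mathrm{id}-\pi\|$, combined with the adapted-form calculation of Section~\ref{metrics} to produce a dual family pairing trivially with $\hat f$) yields $\|T^nx\|\leq C''(\eta|\mu_{\min}|)^n\|x\|$, hence $\sup Sp(T|_W)\leq \eta\inf Sp(T|_V)$. The main obstacles I expect are the well-definedness of $\tilde c$ in the extension step (since distinct reproducing decompositions must give the same scalar) and the final lower bound, which requires careful use of Hahn--Banach together with the right-decomposition machinery of Section~\ref{metrics}.
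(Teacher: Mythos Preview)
Your proof is correct and follows the same route as the paper: extend $c$ linearly via the reproducing hypothesis to a continuous eigenfunctional $\tilde c$ for $\hat T$, build the projection $\pi$ onto $V$ from the forms $\phi_k(x)=\tilde c(e_1\wedge\cdots\wedge x\wedge\cdots\wedge e_p)$, and extract the rate by applying the decay estimate to $x\wedge\hat f$ for a top eigenvector $\hat f\in\bigwedge^{p-1}V$ of $\bigwedge^{p-1}(T|_V)$, followed by the Hahn--Banach lower bound $\|y\wedge\hat f\|\gtrsim\|y\|$ on $W$.

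The one simplification the paper makes, which dissolves your non-semisimple worry and the Cramer bookkeeping, is to fix from the outset a basis $(h_1,\dots,h_p)$ of $V$ in which $T|_V$ is upper-triangular with $|\lambda_1|\ge\cdots\ge|\lambda_p|$: then $\hat f:=h_1\wedge\cdots\wedge h_{p-1}$ is \emph{automatically} a genuine (decomposable) eigenvector of $\bigwedge^{p-1}(T|_V)$ for the eigenvalue $\lambda_1\cdots\lambda_{p-1}=\lambda/\lambda_p$, since $T^nh_1\wedge\cdots\wedge T^nh_{p-1}=(\lambda/\lambda_p)^n\,h_1\wedge\cdots\wedge h_{p-1}$ follows immediately from triangularity. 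With that choice, both the stability of $W=\ker\pi$ and the final rate computation become one-line consequences of the identity $\tilde c\circ\hat T=\lambda\tilde c$, rather than requiring the intertwining $\Phi\circ T=B\circ\Phi$ you set up.
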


\begin{proof}
Let $(x_1,\dots,x_p) \in E^p$ and $\hat{x}=x_1 \wedge \dots \wedge x_p$. Then Definition \ref{reproducing} gives us a decomposition $\hat{x}=\sum\limits_{i=1}\limits^{k} \hat{x^i}$ where $\hat{x^i}=x_1^i \wedge \dots \wedge x_p^i$. Using Proposition \ref{fixed-space}, we get 
$$ \|\lambda^{-n}\hat{T}(\hat{x^i}) - c(\hat{x^i}).h\| \leq C \eta^n \|\hat{x^i}\|$$
hence 
$$ \|\lambda^{-n}\hat{T}(\hat{x}) - (c(\hat{x^1})+\dots c(\hat{x^k})).h\| \leq C \eta^n \sum\limits_{i=1}\limits^{k} \|\hat{x^i}\| \leq C.A \eta^n \|\hat{x}\|$$

We set $c(\hat{x})=c(\hat{x^1})+\dots c(\hat{x^k})$. Then $|c(\hat{x})| \leq R.A.\|\hat{x}\|$, and letting $n \rightarrow \infty$ in the previous inequality shows that $c(\hat{x})$ only depends on $\hat{x}$ and not on the decomposition. Given the linearity of $\hat{T}$, we can extend $c$ to a \emph{linear} map $c: \bigwedge\nolimits^pE \rightarrow \mathbb{C}$ such that for all $\hat{x}=x_1 \wedge \dots \wedge x_p$,
$$\|\lambda^{-n}\hat{T}(\hat{x}) - c(\hat{x}).h\| \leq CA \eta^n \|\hat{x}\|. ~  ~ ~ ~ ~ ~ (*)$$

Let $V \subset \mathcal{C}$ be the $p$-dimensional space fixed  by $T$. Let $(h_1,\dots,h_p)$ be a basis of $V$ in which $T_{|V}$ is triangular, and $(\lambda_1,\dots,\lambda_p)$ the diagonal coefficients (which are the eigenvalues of $T$ with multiplicity). We may assume $|\lambda_1| \geq \dots \geq |\lambda_p|$. We may multiply the map $c$ by a constant and assume $h=h_1 \wedge \dots \wedge h_p$. As $\hat{T} h= Th_1 \wedge \dots \wedge Th_p = \lambda_1 \dots \lambda_p. h_1 \wedge \dots \wedge h_p=\lambda_1 \dots \lambda_p.h$ and $Th=\lambda h$ by Proposition \ref{fixed-space}, we get $\lambda= \lambda_1 \dots \lambda_p$. Taking $\hat{x}=h$ and letting $n$ go to infinity in $(*)$ shows that $c(h)=1$.

Let's consider the linear maps $\pi_i: E \mapsto V$ defined by $\pi_i(x)=c(h_1 \wedge \cdots h_{i-1}\wedge x \wedge h_{i+1} \wedge h_p).h_i$. The map $\pi_i$ is a projection onto Span$(h_i)$, and furthermore $\pi_i \circ \pi_j = 0$ if $i \neq j$. Thus $\pi=\pi_1+\dots+\pi_p$ is a projection onto $V$ parallel to 
$\ker (p)= \{ x \in E ~ |~ \forall i, \pi_i(x)=0 \} = \{ x \in E ~ |~ \forall i, c(h_1 \wedge \cdots h_{i-1}\wedge x \wedge h_{i+1} \wedge h_p)=0 \}$.

From $(*)$, we get that $\forall (x_1, \dots,x_p) \in E^p, c(Tx_1 \wedge\dots \wedge Tx_p)=\lambda_1\dots\lambda_p c(x_1 \wedge \dots \wedge x_p)$. This gives for all $x \in E$:
$$\begin{array}{rcl}
c(h_1 \wedge \cdots h_{i-1}\wedge Tx \wedge h_{i+1} \wedge h_p)&=&\lambda_1^{-1}\dots\lambda_{i-1}^{-1}\lambda_{i+1}^{-1}\dots\lambda_p^{-1}c(Th_1 \wedge \cdots Th_{i-1}\wedge Tx \wedge Th_{i+1} \wedge Th_p)\\
&=& \lambda_ic(h_1 \wedge \cdots h_{i-1}\wedge x \wedge h_{i+1} \wedge h_p).
\end{array}$$
In particular, this shows that $\ker (p)$ is stable by $T$.

Let $x \in E$. We have:
$$
\begin{array}{rcl}
\|h_1 \wedge \dots \wedge h_{p-1} \wedge (T^n(Id-p)x) \| &= &\|h_1 \wedge \dots \wedge h_{p-1} \wedge T^nx -c(h_1 \wedge \dots\wedge h_{p-1} \wedge x)\lambda_p^{n}n h_1 \wedge \dots \wedge h_p\| \\
&=&\|\lambda_1^{-n}\dots\lambda_{p-1}^{-n}T^nh_1 \wedge \dots \wedge T^nh_{p-1} \wedge T^nx \\
& &  -\lambda_p^{n} c(h_1 \wedge \dots \wedge h_{p-1} \wedge x)h_1 \wedge \dots \wedge h_p\| \\
&=&|\lambda_p|^n.\|\lambda^{-n}\widehat{T}^n(h_1 \wedge \dots \wedge h_{p-1}\wedge x)- c(h_1 \wedge \dots \wedge h_{p-1} \wedge x)h_1 \wedge \dots \wedge h_p\| \\
&\leq & CA |\lambda_p|^n \eta^n \|h_1 \wedge \dots \wedge h_{p-1} \wedge x\| ~ ~ ~ ~ ~ (**)
\end{array}
$$

To conclude our estimation we need the following lemma:

\begin{lemma}
Let $V$ be a $p$-dimensional subset of $E$ and $W$ a closed supplement of $E$. Let $h_1,\dots,h_{p-1}$ be independant vectors of $V$. Then there exists $B >0$ such that:
$$\forall y \in W, \|h_1 \wedge \dots \wedge h_{p-1} \wedge y \| \geq B \|y\|.$$
\end{lemma}

\begin{proof}
Let $l_1,\dots,l_{p-1}$ be linear forms such that $l_i(h_i)=1$, $l_i(h_j)=0$ if $i\neq j$, and $l_{i|W}=0$. Let $y \in W$ and let $l$ be a linear form on $V+\textnormal{Span}(y)$ such that $l_{|V}=0$ and $l(y)=\|y\|$. Let's estimate $\|l\|$. For $x \in V, \mu \in \mathbb{C}$, we have $l(x+\mu y)=\|\mu y\| = \|\pi_{W \parallel V} (x+\mu y) \| \leq \|\pi_{W \parallel V} \| \|x+\mu y \| $. Thus $\|l\|\leq \|\pi_{W \parallel V} \|$.

Then $$
\begin{array}{rcl}

|h_1 \wedge \dots \wedge h_{p-1} \wedge y \| &\geq &\langle \frac{l_1}{\|l_1\|} \wedge \frac{l_{p-1}}{\|l_{p-1}\|} \wedge \frac{l}{\|l\|}, h_1 \wedge \dots \wedge h_{p-1} \wedge y \rangle \\
&\geq &\dfrac{1}{\|l_1\| \dots \|l_{p-1}\| \|l\|} \left| \det \begin{pmatrix}
1 & 0 & \cdots & 0\\
0 &\ddots & \ddots & \vdots \\
\vdots & \ddots & 1 &0\\
0 & \cdots & 0 & \|y\|
\end{pmatrix} \right| \\
&\geq &\dfrac{1}{\|l_1\| \dots \|l_{p-1}\| \|\pi_{W \parallel V} \|} \|y\|.
\end{array}$$
The constant $B=\dfrac{1}{\|l_1\| \dots \|l_{p-1}\| \|\pi_{W \parallel V} \|} $ (independent of $y \in W$) works. 
\end{proof}

We now apply this lemma to $(**)$ with $W = \ker(p)$ and $y = (T^n(Id-p)x)$:
$$\|T^n(Id-p)x\| \leq \frac{1}{B}\|h_1 \wedge \dots \wedge h_{p-1} \wedge (T^n(Id-p)x) \| \leq \frac{CA}{B} (\eta|\lambda_p|)^n \|h_1 \wedge \dots \wedge h_{p-1} \wedge x\|.$$

This proves $\sup Sp(T_{|W})\leq \eta |\lambda_p| = \eta \inf Sp(T_{|V})$.

\end{proof}

\begin{prop}
The application $c : \bigwedge\nolimits^pE \rightarrow \mathbb{C}$ which satisfies $$\forall \hat{x} \in \bigwedge\nolimits^pE , \|\lambda^{-n}\hat{T}(\hat{x}) - c(\hat{x}).h\| \leq CA \eta^n \|\hat{x}\|$$
can be written as a simple tensor $l_1 \wedge \dots \wedge l_p$, where $l_i: x \mapsto c(h_1 \wedge \cdots \wedge h_{i-1}\wedge x \wedge h_{i+1} \wedge \dots \wedge h_p)$.
\end{prop}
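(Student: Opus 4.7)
The plan is to identify $c \in (\bigwedge\nolimits^p E)^*$ with the element $l_1 \wedge \cdots \wedge l_p$ obtained from Definition \ref{defi1}. Both are linear forms on $\bigwedge\nolimits^p E$, so it suffices to verify the equality on a decomposition induced by the $T$-invariant splitting $E = V \oplus W$, where $V = \textnormal{Span}(h_1, \ldots, h_p)$ and $W = \ker \pi$ are the spaces constructed in the preceding theorem. Two preliminary facts are needed: the normalization $c(h) = 1$ and antisymmetry of the wedge give $l_i(h_j) = \delta_{ij}$; and the relations $\pi_i \circ \pi_j = \delta_{ij} \pi_i$ from the theorem yield $\pi_i = \pi_i \circ \pi$, so $\pi_i$, and hence $l_i$, vanishes on $W = \ker \pi$.

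Since $V$ and $W$ are $T$-invariant and $E = V \oplus W$, we have
$$\bigwedge\nolimits^p E \;=\; \bigwedge\nolimits^p V \;\oplus\; R, \qquad R \;=\; \bigoplus_{k=0}^{p-1} \bigwedge\nolimits^k V \otimes \bigwedge\nolimits^{p-k} W,$$
and $\hat{T}$ preserves each summand. On the one-dimensional piece $\bigwedge\nolimits^p V = \mathbb{C} h$, both $c$ and $l_1 \wedge \cdots \wedge l_p$ send $h$ to $1$ (the latter because $\det(l_i(h_j)) = \det(\delta_{ij}) = 1$). On a simple tensor $v_1 \wedge \cdots \wedge v_k \wedge w_1 \wedge \cdots \wedge w_{p-k}$ in $R$ with $k < p$, the matrix $(l_i(x_j))$ has its last $p - k \geq 1$ columns zero, so $l_1 \wedge \cdots \wedge l_p$ also vanishes on $R$.

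The main step is to show that $c$ vanishes on $R$ as well. Fix a simple tensor $\hat{x} = v_1 \wedge \cdots \wedge v_k \wedge w_1 \wedge \cdots \wedge w_{p-k}$ in $R$ with $p - k \geq 1$. The identity $(*)$ gives $c(\hat{x}) h = \lim_n \lambda^{-n} \hat{T}^n \hat{x}$, and I will show this limit is zero. The triangular form of $T_{|V}$ in $(h_1, \ldots, h_p)$ (with diagonal $\lambda_1, \ldots, \lambda_p$) implies $\|T^n v\| \leq P(n) |\lambda_1|^n \|v\|$ for $v \in V$ and some polynomial $P$. The spectral gap estimate in the theorem gives $\|T^n w\| \leq C (\eta |\lambda_p|)^n \|h_1 \wedge \cdots \wedge h_{p-1} \wedge w\|$ for $w \in W$. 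Using Lemma \ref{p->p+1} to distribute the wedge norm yields
$$\|\lambda^{-n} \hat{T}^n \hat{x}\|_{\wedge,p} \;\leq\; C' P(n)^k \, \eta^{(p-k) n} \, \frac{|\lambda_1|^{k n}\, |\lambda_p|^{(p-k) n}}{|\lambda_1 \cdots \lambda_p|^n} \, \|\hat{x}\|_{\wedge,p},$$
and from $|\lambda_p|^{p-k} \leq |\lambda_{k+1} \cdots \lambda_p|$ (by the ordering of the eigenvalues), the fraction is bounded by $1$, so the whole expression is $O(P(n)^k \eta^{(p-k) n}) \to 0$. Hence $c(\hat{x}) = 0$.

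The delicate point is absorbing the polynomial factor $P(n)$ that arises from the non-diagonal part of $T_{|V}$; this is harmless because the strict exponential decay $\eta^{(p-k) n}$ with $p - k \geq 1$ dominates any polynomial. Linearity of $c$ then extends the vanishing from simple tensors to all of $R$, and combining with the agreement on $\bigwedge\nolimits^p V$ yields $c = l_1 \wedge \cdots \wedge l_p$ on all of $\bigwedge\nolimits^p E$.
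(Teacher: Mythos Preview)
Your approach via the splitting $E = V \oplus W$ is different from the paper's and is conceptually sound, but the displayed estimate contains a genuine error. You bound each $\|T^n v_i\|$ separately by $P(n)|\lambda_1|^n$, producing the factor $|\lambda_1|^{kn}$ in the numerator, and then assert that the fraction
\[
\frac{|\lambda_1|^{kn}\,|\lambda_p|^{(p-k)n}}{|\lambda_1\cdots\lambda_p|^n}
\]
is bounded by $1$. The inequality $|\lambda_p|^{p-k}\le|\lambda_{k+1}\cdots\lambda_p|$ you cite only controls the $\lambda_p$ part; you would also need $|\lambda_1|^k\le|\lambda_1\cdots\lambda_k|$, which goes the wrong way. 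For instance with $p=3$, $|\lambda_1|=10$, $|\lambda_2|=5$, $|\lambda_3|=1$ and $k=2$, the fraction equals $2^n$, so your full bound $P(n)^2\,\eta^n\cdot 2^n$ need not tend to zero when $\eta$ is close to~$1$.

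The repair is to bound the $V$-part as a wedge rather than factor by factor: since $V$ is finite-dimensional, $\bigwedge\nolimits^k T_{|V}$ has spectral radius $|\lambda_1\cdots\lambda_k|$, hence $\|T^n v_1\wedge\cdots\wedge T^n v_k\|_{\wedge,k}\le Q(n)\,|\lambda_1\cdots\lambda_k|^n$ for some polynomial $Q$. The fraction then becomes $\prod_{i>k}|\lambda_p|^n/|\lambda_i|^n\le 1$, and the rest of your argument goes through.

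The paper's proof sidesteps this estimate entirely. Using the relation $l_i\circ T=\lambda_i\,l_i$ already obtained in the preceding theorem, one has $(l_1\wedge\cdots\wedge l_p)(\hat T\hat x)=\lambda\,(l_1\wedge\cdots\wedge l_p)(\hat x)$, so $(l_1\wedge\cdots\wedge l_p)(\lambda^{-n}\hat T^n\hat x)$ is constant in $n$; passing to the limit in $(*)$ and using $(l_1\wedge\cdots\wedge l_p)(h)=1$ yields $(l_1\wedge\cdots\wedge l_p)(\hat x)=c(\hat x)$ in one line, with no splitting and no growth-rate bookkeeping needed.
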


\begin{proof}
As seen in the previous proof, we have for all $x \in E$, $l_i(Tx)=\lambda^i x$. Hence for all $\hat{x} \in \bigwedge\nolimits^pE$, $l_1 \wedge \dots \wedge l_p (\hat{T}\hat{x})=\lambda_1 \dots \lambda_p. l_1 \wedge \dots \wedge l_p (\hat{x})= \lambda .l_1 \wedge \dots \wedge l_p (\hat{x})$, and therefore $l_1 \wedge \dots \wedge l_p (\lambda^{-n}\hat{T}^n\hat{x})=l_1 \wedge \dots \wedge l_p (\hat{x})$. As $\|\lambda^{-n}\hat{T}(\hat{x}) - c(\hat{x}).h\| \tendn 0$, we get 
$$l_1 \wedge \dots \wedge l_p (\hat{x}) = l_1 \wedge \dots \wedge l_p (\lambda^{-n}\hat{T}^n\hat{x}) \tendn l_1 \wedge \dots \wedge l_p (c(\hat{x}).h)=c(\hat{x}).$$
\end{proof}


\section{Cone contraction for random products of operators}
\label{random-section}

In this section, $\|.\|_{\wedge,p}$ will denote the norm $\|.\|_{\wedge2,p}$ on $\bigwedge\nolimits^pE$. For $x \in E$ and $r \geq 0$, we will denote with $B(x,r)$ the ball of center $x$ and radius $r$.

\begin{defi}
Given a $p$-cone $\mathcal{C}$, we denote $G_p(\mathcal{C}):=\{ W \subset \mathcal{C} ~ |~ W \textnormal{ is a sub vector space of dimension } p \}$. Given $m$ as in Definition \ref{aperture-def}, we define $$\widehat{\mathcal{C}}_{m=1}:=\{x_1\wedge \cdots \wedge x_p ~ |~ \widehat{m}(x_1\wedge \cdots \wedge x_p)=1 \textnormal{ and } \Span(x_1, \cdots, x_p) \subset \mathcal{C} \}$$

where $\widehat{m}(x_1\wedge \cdots \wedge x_p)=\det(m(x_1),\dots,m(x_p))$. 
\end{defi}

In the next propositions, we will prove that we can identify $G_p(\mathcal{C})$ and $\widehat{\mathcal{C}}_{m=1}$. That means $G_p(\mathcal{C})$ considered as a submanifold of $G_p(E)$ and $\widehat{\mathcal{C}}_{m=1}$ considered a a submanifold of $\bigwedge\nolimits^p E$ share the same analytical structure. Then we'll consider the $p$-dimensional subspaces in $\mathcal{C}$ as elements of $\widehat{\mathcal{C}}_{m=1}$ for analytical purposes, and as elements of $G_p(\mathcal{C})$ for geometric purposes.

\begin{prop}
\label{homeo grassmannienne}
The application
$$G: \left\{\begin{array}{rcl}
\widehat{\mathcal{C}}_{m=1} &\longrightarrow &G_p(\mathcal{C}) \\
x_1 \wedge \cdots \wedge x_p &\mapsto &\Span(x_1, \cdots, x_p)
\end{array}\right.$$
is a homeomorphism.
\end{prop}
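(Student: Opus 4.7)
The plan is to verify that $G$ is a continuous bijection whose inverse is also continuous. The key input is that the constraint $\widehat{m}=1$ cleanly fixes the scalar ambiguity inherent in identifying a decomposable tensor with its spanning subspace, while the bounded-aperture hypothesis guarantees that $\widehat{m}$ is nonzero on $\bigwedge^{p}V$ for every $V\in G_p(\mathcal{C})$.

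First I would establish well-definedness and bijectivity. If $x_{1}\wedge\cdots\wedge x_{p}\in\widehat{\mathcal{C}}_{m=1}$, the tensor is nonzero (its $\widehat{m}$-image is $1$), so the $x_i$ are linearly independent and $\Span(x_1,\dots,x_p)$ is a genuine $p$-dimensional subspace of $\mathcal{C}$. Two decomposable tensors with identical wedge product span the same subspace, and differ by a scalar which is pinned down by imposing $\widehat{m}=1$, giving injectivity. For surjectivity, the bounded-aperture inequality $\|m\|\|u\|\leq K|m(u)|$ on $\mathcal{C}$ forces $m|_{V}:V\to\mathbb{C}^{p}$ to be injective, hence an isomorphism; then any basis $(x_1,\dots,x_p)$ of $V$ satisfies $\widehat{m}(x_1\wedge\cdots\wedge x_p)=\det(m(x_1),\dots,m(x_p))\neq 0$, and we rescale $x_1$ to make this $1$.

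Next, continuity of $G$ follows from a uniform lower bound on $\|\cdot\|_{\wedge,p}$ along $\widehat{\mathcal{C}}_{m=1}$. Viewing $G_p(\mathcal{C})$ with the metric $d_{\wedge,p}$ (equivalent to $d_H$ by Theorem \ref{metriques-equivalentes}), note that $\widehat{m}$ is a continuous linear functional on $\bigwedge^{p}E$, so $1=|\widehat{m}(\alpha)|\leq\|\widehat{m}\|\,\|\alpha\|_{\wedge,p}$ for all $\alpha\in\widehat{\mathcal{C}}_{m=1}$. Thus $\|\alpha\|_{\wedge,p}$ is bounded below by $1/\|\widehat{m}\|$. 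If $\alpha_n\to\alpha$ in $\widehat{\mathcal{C}}_{m=1}$, then $\|\alpha_n\|_{\wedge,p}\to\|\alpha\|_{\wedge,p}>0$, so the normalized representatives $\alpha_n/\|\alpha_n\|_{\wedge,p}$ converge to $\alpha/\|\alpha\|_{\wedge,p}$ in $\bigwedge^{p}E$, and the definition of $d_{\wedge,p}$ yields $d_{\wedge,p}(G(\alpha_n),G(\alpha))\to 0$.

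The main obstacle is continuity of $G^{-1}$, because $d_{\wedge,p}(V_n,V)$ is an infimum over all norm-$1$ representatives while the inverse picks the specific $\widehat{m}$-normalized ones. Given $V_n\to V$ in $G_p(\mathcal{C})$ with $\alpha=G^{-1}(V)$ and $\alpha_n=G^{-1}(V_n)$, fix a norm-$1$ representative $v\in\bigwedge^{p}V$; the definition of $d_{\wedge,p}$ yields norm-$1$ representatives $v_n\in\bigwedge^{p}V_n$ and scalars $\lambda_n\in S^{1}$ with $\|v_n-\lambda_n v\|_{\wedge,p}\to 0$. Extracting a subsequence along which $\lambda_n\to\lambda\in S^{1}$ (by compactness of the unit circle), we have $v_n\to\lambda v$, hence $\widehat{m}(v_n)\to\lambda\widehat{m}(v)$, which is nonzero because $m|_{V}$ is an isomorphism. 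Since $\alpha_n$ is the unique scalar multiple of $v_n$ whose $\widehat{m}$-image is $1$, we deduce $\alpha_n=v_n/\widehat{m}(v_n)\to v/\widehat{m}(v)=\alpha$. The scalar $\lambda$ cancels, so the limit is independent of the extracted subsequence, and the full sequence $(\alpha_n)$ converges to $\alpha$, completing the proof.
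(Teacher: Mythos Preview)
Your proof is correct and rests on the same ingredients as the paper's: the bounded-aperture hypothesis forces $m|_V$ to be an isomorphism so that $\widehat m$ is nonvanishing on $\bigwedge^pV$, the normalization $\widehat m=1$ removes the scalar ambiguity, and the equivalence $d_H\sim d_{\wedge,p}$ from Theorem~\ref{metriques-equivalentes} lets one pass between the two pictures. The execution differs only in that the paper establishes the stronger statement that $G$ and $G^{-1}$ are Lipschitz; for $G^{-1}$ it writes, with $x',y'$ the norm-one representatives and $|\lambda|=1$ realizing $d_{\wedge,p}$ up to a constant,
\[
\|x-y\|_\wedge\;\le\;\Bigl\|\tfrac{x'-\lambda y'}{\widehat m(x')}\Bigr\|+\Bigl\|\tfrac{\lambda y'}{\widehat m(x')}-\tfrac{y'}{\widehat m(y')}\Bigr\|
\]
and bounds each piece by a constant times $d_{Gr}(G(x),G(y))$. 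Your sequential argument with the compactness extraction of $\lambda_n\in S^1$ is a softer route that yields continuity rather than an explicit Lipschitz constant; since the proposition only claims a homeomorphism, this is entirely sufficient. (In fact you can dispense with the subsequence extraction altogether: from $v_n-\lambda_n v\to 0$ one gets $\widehat m(v_n)-\lambda_n\widehat m(v)\to 0$, hence $|\widehat m(v_n)|$ is eventually bounded below, and then $\alpha_n-\alpha=\bigl(v_n\widehat m(v)-v\widehat m(v_n)\bigr)/\bigl(\widehat m(v_n)\widehat m(v)\bigr)\to 0$ directly.)
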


\begin{proof}
If $\Span(x_1, \cdots, x_p)=\Span(y_1, \cdots, y_p)$, then $x_1\wedge \cdots \wedge x_p= \lambda y_1\wedge \cdots \wedge y_p$ for some $\lambda \in \mathbb{C}$. Then, as $\widehat{m}(x_1\wedge \cdots \wedge x_p)=\widehat{m}(y_1\wedge \cdots \wedge y_p)=1$, we get $\lambda=1$ and $x_1\wedge \cdots \wedge x_p= y_1\wedge \cdots \wedge y_p$. Hence $G$ is injective.

Let $W \in G_p(\mathcal{C})$ and $x_1,\cdots,x_p$ such that $W = \Span(x_1,\cdots,x_p)$. Then $x_1\wedge \cdots \wedge x_p \neq 0$, and as $\Span(x_1,\cdots,x_p) \subset \mathcal{C}$ we get $\widehat{m}(x_1\wedge \cdots \wedge x_p) \neq 0$ (as in Lemma \ref{aperture-2}). Hence $W=G(\frac{x_1}{\widehat{m}(x_1\wedge x_2\wedge \cdots \wedge x_p)} \wedge \cdots \wedge x_p)$ and $G$ is surjective.

Let $x_1 \wedge \cdots \wedge x_p, y_1 \wedge \cdots \wedge y_p \in \widehat{\mathcal{C}}_{m=1}$.Then $\frac{1}{\widehat{m}} \leq \| x_1 \wedge \cdots \wedge x_p \|_{\wedge} \leq 1$ and we get :
$$
\begin{array}{ll}
d_{Gr}(G(x_1 \wedge \cdots \wedge x_p),G(y_1 \wedge \cdots \wedge y_p)) &\leq Cd_{\wedge}(G(x_1 \wedge \cdots \wedge x_p),G(y_1 \wedge \cdots \wedge y_p)) \\
&\leq 2C .\underset{\lambda \in \mathbb{C}}{\inf}\left\| \frac{x_1 \wedge \cdots \wedge x_p}{\|x_1 \wedge \cdots \wedge x_p\|} - \lambda \frac{y_1 \wedge \cdots \wedge y_p}{\|y_1 \wedge \cdots \wedge y_p\|}\right\| \\
&\leq 2C \|\widehat{m}\| .\underset{\lambda \in \mathbb{C}}{\inf}\| x_1 \wedge \cdots \wedge x_p - \lambda y_1 \wedge \cdots \wedge y_p \| \\
&\leq 2C \|\widehat{m}\|.\| x_1 \wedge \cdots \wedge x_p - y_1 \wedge \cdots \wedge y_p \|
\end{array}$$
Hence $G$ is Lispchitz.

We denote $x=x_1 \wedge \cdots \wedge x_p$, $y=y_1 \wedge \cdots \wedge y_p$, $x'=\frac{x_1 \wedge \cdots \wedge x_p}{\|x_1 \wedge \cdots \wedge x_p\|}$ and $y'=\frac{y_1 \wedge \cdots \wedge y_p}{\|y_1 \wedge \cdots \wedge y_p\|}$. Note that $x=\frac{x'}{\widehat{m}(x')}$ and $y=\frac{y'}{\widehat{m}(y')}$. By Theorem \ref{metriques-equivalentes}, there exist a constant $C>0$ and $\lambda \in \mathbb{C}$ with $|\lambda|=1$ such that $\|x' - \lambda y'\|_{\wedge} \leq C d_{Gr}(G(x),G(y))$. Hence

$$\begin{array}{rll}
\|x-y\|_{\wedge} &\leq \| \frac{x'-\lambda y'}{\widehat{m}(x')} \| + \|\frac{\lambda y'}{\widehat{m}(x')}-\frac{y'}{\widehat{m}(y')}\| \\
&\leq \frac{K^p}{\|m\|^p} \|x'-\lambda y'\| + \left| \frac{\lambda \widehat{m}(y')-\widehat{m}(x')}{\widehat{m}(x')\widehat{m}(y')} \right| .\|y'\| \\
&\leq  \frac{K^p}{\|m\|^p} \|x'-\lambda y'\| +\left| \frac{ \widehat{m}(\lambda y'-x')}{\widehat{m}(x')\widehat{m}(y')} \right| \\
&\leq \frac{K^p}{\|m\|^p} \|x'-\lambda y'\| +\frac{ K^{2p}}{\|m\|^p}\|x'-\lambda y'\| \\ 
&\leq \frac{K^p(K^p+1)}{\|m\|^p} C.d_{Gr}(G(x),G(y))

\end{array}$$
and $G^{-1}$ is Lipschitz. Therefore $G$ is a homeomorphism.
\end{proof}

We now show that the two structures also share the same analytical structure.

\begin{prop}
The map $G^{-1} : G_p(\mathcal{C}) \rightarrow \bigwedge\nolimits^pE$ is an embedding, where $G_p(\mathcal{C})$ is given its analytical structure from $G_p(E)$.
\end{prop}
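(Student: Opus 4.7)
The plan is to verify the two non-topological ingredients for $G^{-1}$ to be an analytic embedding: that $G^{-1}$ is holomorphic in the standard analytic charts of $G_p(E)$, and that its differential is everywhere injective. The topological embedding property (homeomorphism onto its image) is already given by Proposition \ref{homeo grassmannienne}.

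Fix $W_0 \in G_p(\mathcal{C})$ and choose a closed supplement $F$ of $W_0$ in $E$. The standard chart of $G_p(E)$ centered at $W_0$ identifies a neighborhood of $W_0$ with an open subset of $\mathcal{L}(W_0, F)$ via $\phi \mapsto W_\phi := \{v + \phi(v) : v \in W_0\}$. Fix a basis $(v_1, \ldots, v_p)$ of $W_0$ with $\widehat{m}(v_1 \wedge \cdots \wedge v_p) = 1$, which is possible since $\widehat{m}$ does not vanish on non-zero representatives of elements of $G_p(\mathcal{C})$ (as in Proposition \ref{homeo grassmannienne}). In this chart,
$$G^{-1}(W_\phi) = \frac{(v_1 + \phi(v_1)) \wedge \cdots \wedge (v_p + \phi(v_p))}{\widehat{m}\bigl((v_1 + \phi(v_1)) \wedge \cdots \wedge (v_p + \phi(v_p))\bigr)}.$$
The numerator is a polynomial of degree $p$ in $\phi$ with values in $\bigwedge\nolimits^p E$, and the denominator is a scalar polynomial in $\phi$ equal to $1$ at $\phi = 0$, hence non-zero on a neighborhood of $0$. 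This gives analyticity, and since $W_0$ was arbitrary we obtain that $G^{-1}$ is analytic on $G_p(\mathcal{C})$.

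For injectivity of the differential, I would compute the first-order expansion of the quotient at $\phi = 0$, giving
$$DG^{-1}|_{W_0}(\psi) = \sum_{i=1}^p v_1 \wedge \cdots \wedge \psi(v_i) \wedge \cdots \wedge v_p - \alpha(\psi) \cdot v_1 \wedge \cdots \wedge v_p,$$
where $\alpha(\psi)$ is the first-order variation of the denominator. Each summand of the first sum has exactly one factor in $F$ (namely $\psi(v_i)$) while $v_1 \wedge \cdots \wedge v_p$ has none, so the two pieces lie in complementary summands of the algebraic decomposition $\bigwedge\nolimits^p E = \bigoplus_{k=0}^p \bigwedge\nolimits^{p-k} W_0 \otimes \bigwedge\nolimits^k F$ coming from $E = W_0 \oplus F$. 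If $DG^{-1}|_{W_0}(\psi) = 0$, both pieces vanish separately. Rewriting the first sum as $\sum_{i=1}^p (\pm 1)(v_1 \wedge \cdots \wedge \hat{v}_i \wedge \cdots \wedge v_p) \otimes \psi(v_i)$ inside $\bigwedge\nolimits^{p-1} W_0 \otimes F$, and using that the $(p-1)$-forms $v_1 \wedge \cdots \wedge \hat{v}_i \wedge \cdots \wedge v_p$ form a basis of the $p$-dimensional space $\bigwedge\nolimits^{p-1} W_0$, uniqueness of the decomposition in the tensor product forces $\psi(v_i) = 0$ for every $i$, and hence $\psi = 0$.

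The main technical point is the honest separation of $\bigwedge\nolimits^p E$ into its $W_0$- and $F$-components in the Banach setting. Since $W_0$ is finite-dimensional, the dual basis of $(v_1, \ldots, v_p)$ in $W_0$ extended by $0$ on $F$ via the continuous projection onto $W_0$ parallel to $F$ provides continuous linear forms that detect the two relevant pieces of $DG^{-1}|_{W_0}(\psi)$, so the argument is valid in the Banach category. Combined with analyticity and Proposition \ref{homeo grassmannienne}, this yields the embedding claim.
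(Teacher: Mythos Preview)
Your proof is correct and follows essentially the same route as the paper: compute $G^{-1}$ in a standard graph chart, expand the quotient to first order to get the derivative, and then argue injectivity via linear independence of $x = v_1\wedge\cdots\wedge v_p$ and the vectors $v_1\wedge\cdots\wedge\psi(v_i)\wedge\cdots\wedge v_p$, concluding with Proposition~\ref{homeo grassmannienne}. The only difference is cosmetic: where the paper simply asserts that $\{h_i\}_{i\in I}\cup\{x\}$ is linearly independent, you justify this explicitly via the bigrading $\bigwedge^p(W_0\oplus F)=\bigoplus_k \bigwedge^{p-k}W_0\otimes\bigwedge^k F$ and the fact that the $(p-1)$-fold wedges $v_1\wedge\cdots\wedge\hat v_i\wedge\cdots\wedge v_p$ form a basis of $\bigwedge^{p-1}W_0$.
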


\begin{proof}
Let $W_0 \in  G_p(\mathcal{C})$, and $(x_1, \cdots,x_p)$ an adapted decomposition of $W_0$. Given $F_0$ a  closed complement of $W_0$ in $E$, a local chart around $W_0$ is given by $L \in \mathcal{L}(W_0,F_0) \mapsto \graph~ L$ (in particular, $W_0=\graph~ L_0$ where $L_0 \equiv 0$). Let $H \in \mathcal{L}(W_0,F_0)$ and $W=\graph (L_0+H)$. Using the notations $x = x_1 \wedge \cdots \wedge x_p$ and $h_i=x_1 \wedge \cdots \wedge Hx_i \wedge \cdots\wedge x_p$, we get:

$$
\begin{array}{lcl}

G^1(W)-G^{-1}(W_0) &= & \dfrac{(x_1+Hx_1) \wedge \cdots \wedge (x_p+Hx_p)}{\widehat{m}((x_1+Hx_1) \wedge \cdots \wedge (x_p+Hx_p))} - \dfrac{x}{\widehat{m}(x)} \\
&= & \dfrac{x + \underset{i}{\sum} h_i + o(H)}{\widehat{m}(x)+\underset{i}{\sum} \widehat{m}(h_i) + o(H)} - \dfrac{x}{\widehat{m}(x)} \\
&= & (1-\underset{i}{\sum} \dfrac{\widehat{m}(h_i)}{\widehat{m}(x)} + o(H)) \dfrac{x}{\widehat{m}(x)} + \underset{i}{\sum} \dfrac{h_i}{\widehat{m}(x)} +o(H)- \dfrac{x}{\widehat{m}(x)}\\
&= &-\left(\underset{i}{\sum} \dfrac{\widehat{m}(h_i)}{\widehat{m}(x)^2}\right).x+\underset{i}{\sum} \dfrac{1}{\widehat{m}(x)}h_i + o(H)
\end{array}$$

Hence $G^{-1}$ is differentiable with derivative $H \mapsto -\left(\underset{i}{\sum} \dfrac{\widehat{m}(h_i)}{\widehat{m}(x)^2}\right)x+\underset{i}{\sum} \dfrac{1}{\widehat{m}(x)}h_i$. We now show that $G^{-1}$ is an immersion. Let $H \in \mathcal{L}(W_0,F_0)$ satisfying $-\left(\underset{i}{\sum} \dfrac{\widehat{m}(h_i)}{\widehat{m}(x)^2}\right)x+\underset{i}{\sum} \dfrac{1}{\widehat{m}(x)}h_i=0$. Denoting $I$ the set of indices $i$ such that $h_i \neq 0$, we verify easily that $\{h_i\}_{i \in I} \cup \{x\}$ is a linearly independent set of vectors of $\bigwedge\nolimits^pE$, thus $I = \emptyset$. Therefore for all $i$, $Hx_i \in W_0$  and $Hx_i \in F_0$, hence $Hx_i=0$ and $H=0$. The derivative of $G^{-1}$ is injective and  $G^{-1}$ is an homeomorphism onto its image by Proposition~\ref{homeo grassmannienne}, hence $G^{-1}$ is an embedding.

\end{proof}

In the rest of this section, we will consider a $p$-cone $\mathcal{C}$ of bounded aperture $K>0$ and with non-empty interior. For $\rho >0$ small enough, 
$$\mathcal{C}[\rho] := \{x \in \mathcal{C} ~ |~ B(x,\rho||x||) \subset \mathcal{C} \}$$
is not reduced to $\{0\}$: we now fix such a value of $\rho>0$. We also fix $\Delta<+\infty$ and write $\eta$ for the contraction constant given by Proposition \ref{contraction}.

\begin{defi}
\label{ensemble M}
We define $\mathscr{M} := \{ M \in \mathcal{L}(E) ~ |~ M(\mathcal{C}^*) \subset \mathcal{C}^*, \textnormal{diam}_{\mathcal{C}}(M(\mathcal{C}^*))\leq \Delta, M(\mathcal{C}) \subset \mathcal{C}[\rho] \}$
\end{defi}

We consider $(\Omega,\mu)$ a probability space and $\tau : \Omega \rightarrow \Omega$ a measurable $\mu$-ergodic application. Given a Banach space $F$, we denote by $\mathscr{B}(\Omega,F)$ the Banach space of ($\mu$-essentially) bounded mesurable maps equipped with the ($\mu$-essentially) uniform norm on $F$. If $A$ is a subset of $F$, we denote by $\mathscr{E}(\Omega,A)$ the set of Bochner-measurable maps from $\Omega$ into $A$.

For $(M_\omega)_{\omega \in \Omega} \in \mathscr{E}(\Omega,\mathscr{M})$, we denote by $M_\omega^{(n)}:=M_\omega \cdots M_{\tau^{n-1}\omega}$ the product of operators along the orbit of $\omega \in \Omega$. The goal of this section is to prove the following theorem:

\begin{theorem}
\label{theorem produit}
Let $t \in \mathbb{D} \mapsto M(t) \in \mathscr{E}(\Omega,\mathscr{M})$ be a map such that
\begin{enumerate}
\item $(t,\omega) \in \mathbb{D}\times \Omega \mapsto M_\omega(t) \in \mathcal{L}(E)$ is measurable and for all $\omega \in \Omega$, the map $t \mapsto M_\omega(t)$ is analytic.
\item $\sup \left\{ \dfrac{\|\bigwedge\nolimits^{p-1}M_\omega(t)\|}{\|\bigwedge\nolimits^{p}M_\omega(t)\|}\|\frac{d}{dt}M_\omega(t)\| : \omega \in \Omega, t \in \mathbb{D}\right\} < \infty$.
\item $\int_\Omega \left| \log \|M_\omega(0)\| \right| < \infty$.
\end{enumerate}
Then for all $t \in \mathbb{D}$, the limit
$$\chi(t)= \underset{n \rightarrow \infty}{\lim} \dfrac{1}{n} \log\|\widehat{M}_\omega^{(n)}\|$$
exists for $\mu$-a.e. $\omega \in \Omega$ and is ($\mu$-a.e.) independent of $\omega$. Moreover the map $t \mapsto \chi(t)$ is real-analytic and harmonic.

\end{theorem}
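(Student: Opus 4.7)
The plan rests on four stages: existence of the limit via Kingman's subadditive ergodic theorem, construction of an equivariant section $\omega \mapsto V(\omega,t) \in G_p(\mathcal{C})$ using the cone contraction, promotion of this section to an analytic object in $t$ thanks to the analytic identification between $G_p(\mathcal{C})$ and $\widehat{\mathcal{C}}_{m=1}$, and identification of $\chi(t)$ with the real part of a $\mu$-integral of an analytic logarithm.

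\emph{Stage 1 (existence).} The cocycle $a_n(\omega,t) := \log\|\widehat{M}_\omega^{(n)}(t)\|$ is $\tau$-subadditive. To apply Kingman I would verify $a_1 \in L^1(\mu)$: the upper bound $a_1 \le p\log\|M_\omega(t)\|$ combined with analyticity in $t$, the uniform derivative control inherent in condition~(2), and the integrability assumption (3) on $\log\|M_\omega(0)\|$ yields integrability of the positive part; a lower bound comes from $M_\omega(t)(\mathcal{C}) \subset \mathcal{C}[\rho]$ together with the bounded aperture of $\mathcal{C}$, which force $\|\widehat{M}_\omega(t)\| \ge c > 0$ uniformly. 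Kingman then provides a.e.\ convergence of $\frac{1}{n}a_n(\omega,t)$, and $\tau$-ergodicity promotes the limit to a $\mu$-a.s.\ constant $\chi(t)$.

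\emph{Stages 2--3 (analytic section).} For $\mu$-a.e.\ $\omega$ and every $t \in \mathbb{D}$, the sets $U_n(\omega,t) := M_\omega^{(n)}(t)\bigl(G_p(\mathcal{C})\bigr)$ form a nested decreasing sequence of closed subsets of $G_p(\mathcal{C})$ with $d_\mathcal{C}$-diameter $\le \eta^{n-1}\Delta$, by Proposition~\ref{contraction}. Completeness of $G_p(\mathcal{C})$ (via Theorem~\ref{metriques-equivalentes}, Proposition~\ref{cv exterieur} and Lemma~\ref{aperture-2}, which translate the $d_\mathcal{C}$-Cauchy condition into a Banach-norm Cauchy condition on $\widehat{\mathcal{C}}_{m=1}$) identifies $\bigcap_n U_n(\omega,t)$ with a single $p$-dimensional subspace $V(\omega,t)$ satisfying the equivariance $M_\omega(t) V(\tau\omega,t) = V(\omega,t)$. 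Set $\hat{x}(\omega,t) := G^{-1}(V(\omega,t)) \in \widehat{\mathcal{C}}_{m=1}$ and, for any fixed $\hat{y}_0 \in \widehat{\mathcal{C}}_{m=1}$,
$$\hat{x}_n(\omega,t) := \frac{\widehat{M}_\omega^{(n)}(t)\hat{y}_0}{\widehat{m}\bigl(\widehat{M}_\omega^{(n)}(t)\hat{y}_0\bigr)}.$$
Each $\hat{x}_n(\omega,\cdot)$ is analytic on $\mathbb{D}$ (the denominator does not vanish by bounded aperture and cone preservation), and Lemma~\ref{aperture-2} converts the cone contraction into $\|\hat{x}_{n+1}-\hat{x}_n\|_{\wedge,p} \le C\eta^n$ uniformly in $(\omega,t)$. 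Uniform geometric convergence then shows that $\hat{x}(\omega,\cdot)$ is analytic; consequently the multiplier $\alpha(\omega,t) := \widehat{m}\bigl(\widehat{M}_\omega(t)\hat{x}(\tau\omega,t)\bigr)$ is analytic and nowhere vanishing, so $\log\alpha(\omega,\cdot)$ defines a well-defined analytic branch on $\mathbb{D}$.

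\emph{Stage 4 (integral formula and conclusion).} Iterating $\widehat{M}_\omega(t)\hat{x}(\tau\omega,t) = \alpha(\omega,t)\hat{x}(\omega,t)$ yields
$$\widehat{M}_\omega^{(n)}(t)\hat{x}(\tau^n\omega,t) = \Bigl(\prod_{k=0}^{n-1}\alpha(\tau^k\omega,t)\Bigr)\hat{x}(\omega,t),$$
so Birkhoff's pointwise ergodic theorem gives $\frac{1}{n}\sum_{k=0}^{n-1}\log|\alpha(\tau^k\omega,t)| \to \int_\Omega \log|\alpha(\omega,t)|\,d\mu$. A comparison step then identifies this limit with $\chi(t)$: the cone contraction aligns $\widehat{M}_\omega^{(n)}(t)\hat{y}$ with $V(\omega,t)$ at rate $\eta^n$ for any decomposable $\hat{y}$ representing a subspace of $\mathcal{C}$, and the interior condition $M_\omega(t)(\mathcal{C}) \subset \mathcal{C}[\rho]$ supplies an implicit reproducing-type control that dominates the operator norm by action on cone directions. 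One thus obtains $\chi(t) = \int_\Omega \log|\alpha(\omega,t)|\,d\mu(\omega) = \int_\Omega \mathrm{Re}\bigl(\log\alpha(\omega,t)\bigr)\,d\mu(\omega)$. Since $\log\alpha(\omega,\cdot)$ is analytic in $t$, its real part is harmonic; integration against $\mu$ preserves harmonicity, so $\chi$ is harmonic on $\mathbb{D}$ and hence real-analytic. The main obstacle is precisely this last comparison between $\|\widehat{M}_\omega^{(n)}(t)\|$ and $\|\widehat{M}_\omega^{(n)}(t)\hat{x}(\tau^n\omega,t)\|$, which requires exploiting both the interior condition $\mathcal{C}[\rho] \neq \{0\}$ and the derivative control from condition (2) to transfer Grassmannian-level estimates to estimates on arbitrary elements of $\bigwedge\nolimits^p E$, uniformly in $t$.
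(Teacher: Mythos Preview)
Your outline is essentially correct and tracks the paper's architecture closely: construct an equivariant section in $\widehat{\mathcal{C}}_{m=1}$, express $\chi(t)$ as a Birkhoff average of $\log|\alpha(\omega,t)|$ (the paper's $\log|p_\omega(t)|$), and deduce harmonicity from analyticity of $\alpha$ in $t$. Two points of comparison and one small correction are worth noting.

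\textbf{Analyticity of the section.} You obtain analyticity of $t\mapsto\hat{x}(\omega,t)$ by uniform geometric convergence of the analytic iterates $\hat{x}_n(\omega,\cdot)$, which is perfectly valid and arguably more direct. The paper instead embeds $G_p(\mathcal{C})$ in a single global chart $\mathcal{L}(W_0,\ker m)$ (with $W_0\in G_p(\mathcal{C})$), shows the fixed-point map $A\mapsto\tilde{\pi}_t(A)$ has derivative of spectral radius $\le\eta<1$ at the fixed point (Lemma~\ref{pim differentiable}), and applies the implicit function theorem to $A-\tilde{\pi}_t(A)=0$ (Lemma~\ref{v analytic}). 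Your route avoids the chart machinery; the paper's route makes the Banach-manifold structure explicit.

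\textbf{The comparison step.} You correctly flag the comparison between $\|\widehat{M}_\omega^{(n)}(t)\|$ and $|\widehat{m}(\widehat{M}_\omega^{(n)}(t)\hat{x}(\tau^n\omega,t))|$ as the main obstacle. This is exactly where the paper does its technical work: Lemmas~\ref{spanboule1} and~\ref{spanboule2} exploit the interior condition $M(\mathcal{C})\subset\mathcal{C}[\rho]$ to show that for any decomposable $y$ one can perturb a representative of $W\in G_p(\mathcal{C}[\rho])$ in the $y$-directions while staying in $\mathcal{C}$, which then yields the two-sided bound of Lemma~\ref{encadrementnormeM}. Your phrase ``implicit reproducing-type control'' points at the right mechanism but is not a proof; this is the genuine nontrivial input you would need to supply.

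\textbf{Stage 1 lower bound.} Your claim that $M_\omega(t)(\mathcal{C})\subset\mathcal{C}[\rho]$ together with bounded aperture forces $\|\widehat{M}_\omega(t)\|\ge c>0$ uniformly is not correct: scaling $M$ by $\epsilon>0$ preserves all cone conditions but sends $\|\widehat{M}\|$ to $\epsilon^p\|\widehat{M}\|$. The correct route (implicit in the paper) is Gronwall from Lemma~\ref{derivee chapeau}: condition~(2) gives $\|\frac{d}{dt}\widehat{M}_\omega(t)\|\le pC\|\widehat{M}_\omega(t)\|$, hence $\log\|\widehat{M}_\omega(t)\|$ differs from $\log\|\widehat{M}_\omega(0)\|$ by a uniform constant, and one then appeals to assumption~(3). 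In any case Stage~1 is redundant once Stage~4 is in place, since Birkhoff already yields both existence and the value of the limit.
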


As $\mathcal{C}$ of bounded aperture $K>0$, we can find a linear map $m:E \rightarrow \mathbb{C}^p$ such that $\|m\|=1$ and $\forall x \in \mathcal{C}, \|x\| \leq K \|m(x)\|$. We fix such a $m$ for the rest of the section. Our first step is to understand and control $\|\widehat{M}\|_{\wedge,p}$. To do that, we use estimates on $\widehat{m}(\widehat{M}(v))$ where $v$ is representing $V \in \mathcal{C}[\rho]$.
\begin{defi}
\label{defipim}
For $M\in \mathcal{L}$ such that $M(\mathcal{C}^*) \subset \mathcal{C}^*$, we define $$\pi_M : \left\{\begin{array}{rcl}
\widehat{\mathcal{C}}_{m=1} &\longrightarrow &\widehat{\mathcal{C}}_{m=1} \\
x &\mapsto &\frac{\widehat{M}x}{\widehat{m}(\widehat{M}x)}
\end{array}\right.$$ which is differentiable (as a restriction of a differentiable application). Using the identification $\widehat{\mathcal{C}}_{m=1} \simeq G_p(\mathcal{C})$, $\pi_M$ can be seen as 
$$\pi_M : \left\{\begin{array}{rcl}
G_p(\mathcal{C})&\longrightarrow &G_p(\mathcal{C}) \\
W &\mapsto &M(W)\end{array}\right.$$
\end{defi}

\begin{lemma}
\label{contractionpim}
Soit $(M_n)_{n \in \mathbb{N}} \in \mathcal{M}^{\mathbb{N}}$. Let $M^{(n)}=M_n\circ\cdots\circ M_1$ and $\pi^{(n)}:=\pi_{M^{(n)}}=\pi_{M_n}\circ \cdots \circ \pi_{M_1}$. Let $v,w \in \widehat{\mathcal{C}}_{m=1}$. Then 
$$\| \pi^{(n)}(v) - \pi^{(n)}(w) \|_{\wedge,p} \leq C\eta^n$$
where $C>0$ only depends on $p$ and the definition of $\mathcal{M}$.
\end{lemma}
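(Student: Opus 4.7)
The plan is to combine the $p$-cone contraction principle (Proposition \ref{contraction}) with Lemma \ref{aperture-2}, which is precisely the tool that converts a cone-distance bound between two $p$-dimensional subspaces into a norm bound in $\bigwedge^{p}E$ for their $\widehat{m}$-normalized representatives.

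First, I would identify $\pi^{(n)}(v)$ and $\pi^{(n)}(w)$ geometrically. Since $v, w \in \widehat{\mathcal{C}}_{m=1}$, the subspaces $V_0 := \Span(v)$ and $W_0 := \Span(w)$ belong to $G_p(\mathcal{C})$. By Definition \ref{defipim} and the identification of Proposition \ref{homeo grassmannienne}, $\pi^{(n)}(v)$ is the unique element of $\widehat{\mathcal{C}}_{m=1}$ representing $V_n := M^{(n)}(V_0)$, and similarly $\pi^{(n)}(w)$ represents $W_n := M^{(n)}(W_0)$. In particular, for any bases $(x_1,\dots,x_p)$ of $V_n$ and $(y_1,\dots,y_p)$ of $W_n$, we have
$$\pi^{(n)}(v) = \frac{x_1 \wedge \cdots \wedge x_p}{\widehat{m}(x_1 \wedge \cdots \wedge x_p)}, \qquad \pi^{(n)}(w) = \frac{y_1 \wedge \cdots \wedge y_p}{\widehat{m}(y_1 \wedge \cdots \wedge y_p)}.$$

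Next, I would derive a cone-distance estimate on $(V_n, W_n)$. Because $M_1 \in \mathscr{M}$ satisfies $\textnormal{diam}_{\mathcal{C}}(M_1 \mathcal{C}^*) \leq \Delta$, we have $d_{\mathcal{C}}(M_1 V_0,\, M_1 W_0) \leq \Delta$. Each subsequent operator $M_k$ (for $k \geq 2$) is a strict cone contraction by Proposition \ref{contraction} with contraction factor $\eta < 1$ depending only on $\Delta$. Iterating this contraction $n-1$ times from the bound after $M_1$ yields
$$d_{\mathcal{C}}(V_n, W_n) \leq \eta^{n-1}\, d_{\mathcal{C}}(M_1 V_0, M_1 W_0) \leq \Delta \eta^{n-1}.$$

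Finally, I would apply Lemma \ref{aperture-2} to $V_n, W_n \subset \mathcal{C}$ with the linear map $m$ fixed at the beginning of the section, to obtain
$$\|\pi^{(n)}(v) - \pi^{(n)}(w)\|_{\wedge,p} \leq p\, p!\, \frac{K^p}{\|m\|^p}\, d_{\mathcal{C}}(V_n, W_n) \leq \frac{p\, p!\, K^p \Delta}{\|m\|^p\, \eta}\, \eta^{n},$$
which is the desired inequality with $C = p\, p!\, K^p \Delta / (\|m\|^p\, \eta)$, a constant depending only on $p$, $K$, $\Delta$, $\rho$ (all fixed by the definition of $\mathscr{M}$).

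The only points requiring care are purely bookkeeping: checking that the $\widehat{m}$-normalization used to define $\pi_M$ coincides with the representatives selected in Lemma \ref{aperture-2}, and noting that it is the first operator $M_1$ that produces the finite diameter $\Delta$ (so the contraction factor is $\eta^{n-1}$, not $\eta^{n}$, which is absorbed into the constant $C$). There is no genuine mathematical obstacle beyond this.
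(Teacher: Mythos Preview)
Your proof is correct and follows exactly the same route as the paper: apply Lemma \ref{aperture-2} to the $\widehat{m}$-normalized representatives of $M^{(n)}V_0$ and $M^{(n)}W_0$, and bound $d_{\mathcal{C}}(M^{(n)}V_0,M^{(n)}W_0)$ using the finite-diameter property of $M_1$ together with the contraction factor $\eta$ from Proposition \ref{contraction} for the remaining $n-1$ operators. Your observation that the honest exponent is $\eta^{n-1}$ (absorbed into $C$) is exactly right; the paper writes $\eta^{n}$ but the argument is the same.
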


\begin{proof}
Let $V,W \in G_p(\mathcal{C})$ be the $p$-dimensional subspaces represented by $v$ and $w$. Using Lemma \ref{aperture-2}, we get 
$$\| \pi^{(n)}(v) - \pi^{(n)}(w) \|_{\wedge,p} \leq p.p! \dfrac{K^p}{\|m\|^p} d_{\mathcal{C}}(\pi^{(n)}(V),\pi^{(n)}(W)) \leq p.p! \dfrac{K^p}{\|m\|^p} \eta^{n}\Delta.$$
\end{proof}

The next two lemmas give ways to estimate $\|\widehat{M}\|_{\wedge,p}$.

\begin{lemma}
\label{normetenseur}
Let $M \in \mathcal{L}(E)$ and consider the operator norm of $\widehat{M}$ on $\bigwedge\nolimits^pE$ given by \linebreak $\|\widehat{M}\|_{\wedge,p}:=\underset{\|x\|_{\wedge,p}=1}{\sup} \|\widehat{M}(x) \|_{\wedge,p}$. Then this supremum is given by its value on the set of decomposable tensors, i.e.
$$\|\widehat{M}\|_{\wedge,p}=\underset{\|x_1\wedge \cdots \wedge x_p\|_{\wedge,p}=1}{\sup} \|\widehat{M}(x_1\wedge \cdots \wedge x_p) \|_{\wedge,p}.$$

\end{lemma}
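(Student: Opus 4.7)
The plan is to set $S := \sup\{\|\widehat{M}(x_1\wedge\cdots\wedge x_p)\|_{\wedge,p} : \|x_1\wedge\cdots\wedge x_p\|_{\wedge,p}=1\}$ and show $\|\widehat{M}\|_{\wedge,p}=S$. The inequality $\|\widehat{M}\|_{\wedge,p}\geq S$ is immediate, since the supremum defining $S$ is taken over a subset of the unit sphere of $(\bigwedge\nolimits^pE,\|.\|_{\wedge,p})$.

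For the reverse inequality, I would first upgrade $S$ to a pointwise bound on decomposable tensors. Since $\|.\|_{\wedge,p}=\|.\|_{\wedge2,p}$ is absolutely homogeneous (a direct consequence of its definition as an infimum over decompositions), for any nonzero decomposable $y_1 \wedge \cdots \wedge y_p$, setting $c=\|y_1 \wedge \cdots \wedge y_p\|_{\wedge,p}$ and rescaling to $(y_1/c)\wedge y_2 \wedge \cdots \wedge y_p$ produces a decomposable tensor of $\|.\|_{\wedge,p}$-norm $1$; combining the defining bound of $S$ with the elementary inequality $\|y_1\wedge\cdots\wedge y_p\|_{\wedge,p}\leq \|y_1\|\cdots\|y_p\|$ (which follows from Definition \ref{defi2} applied to the one-term decomposition) yields
$$\|\widehat{M}(y_1\wedge\cdots\wedge y_p)\|_{\wedge,p} \leq S\,\|y_1\wedge\cdots\wedge y_p\|_{\wedge,p} \leq S\,\|y_1\|\cdots\|y_p\|.$$

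Now take any $x \in \bigwedge\nolimits^pE$ with $\|x\|_{\wedge,p}=1$ and fix $\epsilon>0$. By Definition \ref{defi2}, there is a decomposition $x = \sum_i x^i_1 \wedge \cdots \wedge x^i_p$ with $\sum_i \|x^i_1\|\cdots\|x^i_p\| < 1+\epsilon$. Applying $\widehat{M}$ termwise, then using the triangle inequality for $\|.\|_{\wedge,p}$ and the pointwise bound above,
$$\|\widehat{M}(x)\|_{\wedge,p} \leq \sum_i \|\widehat{M}(x^i_1\wedge\cdots\wedge x^i_p)\|_{\wedge,p} \leq S\sum_i \|x^i_1\|\cdots\|x^i_p\| < S(1+\epsilon).$$
Letting $\epsilon\to 0$ and then taking the supremum over such $x$ gives $\|\widehat{M}\|_{\wedge,p} \leq S$, completing the proof. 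There is no real obstacle here: the argument is essentially the standard fact that the norm of a linear operator on a projective tensor product is determined by its values on elementary tensors, specialised to the alternating setting; the only thing to verify is the homogeneity and triangle inequality of $\|.\|_{\wedge 2,p}$, both of which are routine consequences of its infimum definition.
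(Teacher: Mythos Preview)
Your proof is correct and follows essentially the same route as the paper's: both use that $\|\widehat{M}\|_{\wedge,p}\geq S$ is trivial, and for the reverse direction decompose an arbitrary $x$ as $\sum_i x_1^i\wedge\cdots\wedge x_p^i$, bound $\|\widehat{M}x\|$ termwise by $S\sum_i\|x_1^i\|\cdots\|x_p^i\|$, and take the infimum over decompositions. The only cosmetic difference is that you make the $\epsilon$-argument and the rescaling to unit decomposables explicit, whereas the paper absorbs these into ``taking the infimum over all decompositions''.
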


\begin{proof}
See Appendix.
\end{proof}

\begin{prop}
\label{normeproduit}
Let $M \in \mathcal{L}(E)$. We have
$$\|\widehat{M}\|_{\wedge,p} \leq \|\bigwedge\nolimits^{p-1}M\|_{\wedge,p-1} .\|M\| \leq \|M\|^p$$

Similarly, for a continuous linear map $m : E \rightarrow \mathbb{C}^p$, we have $\|\hat{m}\| \leq \|m\|^p$.
\end{prop}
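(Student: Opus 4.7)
The plan is to work directly from the variational definition of $\|\cdot\|_{\wedge 2,p}$ in Definition \ref{defi2}, combined with the sub-multiplicative bound from Lemma \ref{p->p+1}(ii). The crucial point is that the norm specified at the start of Section \ref{random-section} is precisely $\|\cdot\|_{\wedge 2,p}$, which is well-suited to this argument since Lemma \ref{p->p+1}(ii) provides a clean multiplicative inequality $\|x\wedge u\|_{\wedge 2,p+1}\leq \|x\|\,\|u\|_{\wedge 2,p}$, without the extraneous $(p+1)$ factor that would appear if one used (i).

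First I would fix $x\in\bigwedge\nolimits^pE$ and consider an arbitrary decomposition $x=\sum_i x_1^i\wedge\cdots\wedge x_p^i$. Linearity of $\widehat M$ and the identity $\widehat M(x_1^i\wedge\cdots\wedge x_p^i)=Mx_1^i\wedge \bigl(\bigwedge\nolimits^{p-1}M\bigr)(x_2^i\wedge\cdots\wedge x_p^i)$ allow me to apply Lemma \ref{p->p+1}(ii) termwise: each summand is bounded in $\|\cdot\|_{\wedge,p}$ by $\|Mx_1^i\|$ times $\|\bigl(\bigwedge\nolimits^{p-1}M\bigr)(x_2^i\wedge\cdots\wedge x_p^i)\|_{\wedge,p-1}$. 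Bounding $\|Mx_1^i\|$ by $\|M\|\,\|x_1^i\|$, the second factor by $\|\bigwedge\nolimits^{p-1}M\|_{\wedge,p-1}\,\|x_2^i\wedge\cdots\wedge x_p^i\|_{\wedge,p-1}$, and finally using the immediate consequence of Definition \ref{defi2} that $\|x_2^i\wedge\cdots\wedge x_p^i\|_{\wedge,p-1}\leq \|x_2^i\|\cdots\|x_p^i\|$, the triangle inequality yields
$$\|\widehat M(x)\|_{\wedge,p}\leq \|M\|\cdot \|\bigwedge\nolimits^{p-1}M\|_{\wedge,p-1}\sum_i \|x_1^i\|\cdots\|x_p^i\|.$$
Taking the infimum over all decompositions of $x$ gives $\|\widehat M(x)\|_{\wedge,p}\leq \|M\|\cdot \|\bigwedge\nolimits^{p-1}M\|_{\wedge,p-1}\,\|x\|_{\wedge,p}$, which is the first inequality of the statement. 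The second inequality then follows by an immediate induction on $p$, the base case $p=1$ being $\widehat M=M$.

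For the analogous claim $\|\hat m\|\leq \|m\|^p$, the same infimum scheme applies, but I would replace the use of Lemma \ref{p->p+1}(ii) by Hadamard's inequality in $\mathbb C^p$ equipped with its standard Hermitian norm: for any $v_1,\dots,v_p\in\mathbb C^p$, $|\det(v_1,\dots,v_p)|\leq |v_1|\cdots|v_p|$. Applied to $v_j=m(x_j^i)$, this gives $|\hat m(x_1^i\wedge\cdots\wedge x_p^i)|\leq \|m\|^p\,\|x_1^i\|\cdots\|x_p^i\|$, and summing and taking the infimum over decompositions produces $|\hat m(x)|\leq \|m\|^p\,\|x\|_{\wedge,p}$.

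I do not foresee any real obstacle: the argument is entirely a bookkeeping exercise once the right norm is chosen. The one subtle point worth flagging is the need to invoke (ii) rather than (i) of Lemma \ref{p->p+1}, which is also the reason the authors chose $\|\cdot\|_{\wedge 2,p}$ as the working norm for this section.
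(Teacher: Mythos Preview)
Your proposal is correct and follows essentially the same approach as the paper: take an arbitrary decomposition of $x$, bound $\widehat M$ termwise via the sub-multiplicativity $\|Mx_1^i\wedge(\bigwedge^{p-1}M)(x_2^i\wedge\cdots\wedge x_p^i)\|_{\wedge,p}\leq \|M\|\,\|\bigwedge^{p-1}M\|_{\wedge,p-1}\,\|x_1^i\|\cdots\|x_p^i\|$, then take the infimum over decompositions; the $\hat m$ bound likewise uses Hadamard's inequality exactly as you describe. Your explicit invocation of Lemma~\ref{p->p+1}(ii) and your remark on why $\|\cdot\|_{\wedge 2,p}$ is the right norm here are well-placed and match the paper's intent.
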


\begin{proof}
See Appendix.
\end{proof}

The next two lemmas allow us to use the fact that some space $W$ is in $\mathcal{C}[\rho]$ to get estimates on what happens when we move around it. This is then used to prove the estimates on $\|\widehat{M}\|_{\wedge,p}$ in Lemma \ref{encadrementnormeM}. 

Lemma \ref{spanboule1} asserts that under some conditions, if we can move around a $p$-space $W \in \mathcal{C}$ following directions $y_1,\dots,y_p$ for some time $r>0$ and stay in $\mathcal{C}$, then $\| y_1 \wedge \cdots \wedge y_p \|$ can't be too large and is controlled by $r$.

\begin{lemma}
\label{spanboule1}
Let $r>0$, $x_1,\cdots,x_p,y_1,\cdots,y_p \in E$ such that $\Span(x_1+t_1y_1,\cdots,x_p+t_py_p) \subset \mathcal{C}$ for all $(t_1,\cdots,t_p) \in \mathbb{C}^p$ with $|t_i| \leq r$, and such that $m(y_i) \in \Span(m(x_1),\cdots,m(x_i))$ for all $i \in \{1,\cdots,p\}$. Then 
$$\| y_1 \wedge \cdots \wedge y_p \| \leq \frac{K^p}{r^p}\|x_1\wedge \cdots \wedge x_p\|.$$
\end{lemma}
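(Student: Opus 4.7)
The plan is to view $\phi(t):=(x_1+t_1y_1)\wedge\cdots\wedge(x_p+t_py_p)$ as a multilinear polynomial in $t=(t_1,\ldots,t_p)$, of degree at most one in each variable. The coefficient of $t_1\cdots t_p$ in $\phi(t)$ is precisely $y_1\wedge\cdots\wedge y_p$, and it can be recovered by the polarization identity
$$y_1\wedge\cdots\wedge y_p \;=\; \frac{1}{(2r)^p}\sum_{\epsilon\in\{\pm 1\}^p}\Bigl(\prod_{i=1}^p\epsilon_i\Bigr)\phi(r\epsilon_1,\ldots,r\epsilon_p),$$
since summing each $\epsilon_i=\pm 1$ cancels every monomial in which some $t_i$ is absent. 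This gives the estimate $\|y_1\wedge\cdots\wedge y_p\|_{\wedge,p}\leq r^{-p}\sup_{|t_i|\leq r}\|\phi(t)\|_{\wedge,p}$, and reduces the task to bounding $\sup_{|t_i|\leq r}\|\phi(t)\|_{\wedge,p}$ by $K^p\|x_1\wedge\cdots\wedge x_p\|_{\wedge,p}$.

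For that, I would use the bounded-aperture hypothesis to pass from norms in $\bigwedge^p E$ to values of $\widehat m$. For each $|t_i|\leq r$ the hypothesis provides a subspace $V_t:=\Span(x_1+t_1y_1,\ldots,x_p+t_py_p)\subset\mathcal{C}$. Whenever $\phi(t)\neq 0$ this is $p$-dimensional, and since $\|u\|\leq K|m(u)|$ on $\mathcal{C}$ the restriction $m|_{V_t}:V_t\to\mathbb{C}^p$ is a bijection; taking the basis $(v_1,\ldots,v_p)$ of $V_t$ dual to $m$, i.e.\ $m(v_i)=e_i$, one has $\phi(t)=\widehat m(\phi(t))\,v_1\wedge\cdots\wedge v_p$ with each $\|v_i\|\leq K$. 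This yields
$$\|\phi(t)\|_{\wedge,p}\leq K^p|\widehat m(\phi(t))|,$$
the inequality being trivial when $\phi(t)=0$.

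The crucial observation is that $\widehat m(\phi(t))$ is constant in $t$, equal to $\widehat m(x_1\wedge\cdots\wedge x_p)$. By multilinearity, $\widehat m(\phi(t))=\det\bigl(m(x_j)+t_jm(y_j)\bigr)_{j=1}^p$. Since $\Span(x_1,\ldots,x_p)\subset\mathcal{C}$, the family $(m(x_1),\ldots,m(x_p))$ is free, and the hypothesis $m(y_i)\in\Span(m(x_1),\ldots,m(x_i))$ — used here with the understanding that $m(y_i)$ contributes no component in the $m(x_i)$ direction — makes the coordinate matrix of $(m(x_j)+t_jm(y_j))_j$ in the basis $(m(x_1),\ldots,m(x_p))$ triangular with $1$'s on the diagonal, so its determinant equals $1$. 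Hence $\widehat m(\phi(t))\equiv\widehat m(x_1\wedge\cdots\wedge x_p)$, and Proposition~\ref{normeproduit} together with $\|m\|=1$ gives $|\widehat m(x_1\wedge\cdots\wedge x_p)|\leq \|x_1\wedge\cdots\wedge x_p\|_{\wedge,p}$. Chaining the three estimates yields the claimed inequality.

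The main obstacle I expect is exactly this last triangular-determinant step: one has to exploit the full strength of the $m(y_i)$-assumption so that no diagonal term $t_ia_{ii}$ survives in $\det C(t)$, for otherwise $\sup_{|t_i|\leq r}|\widehat m(\phi(t))|$ would pick up a factor $\prod_i(1+r|a_{ii}|)$ and destroy the clean bound $K^p/r^p$. Once the determinant is shown to be identically $1$, the rest of the argument is essentially book-keeping.
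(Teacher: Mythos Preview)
Your polarization and bounded-aperture steps are correct and match the paper's approach, but the determinant step contains a genuine error. The hypothesis reads $m(y_i)\in\Span(m(x_1),\ldots,m(x_i))$ --- the span runs up to index $i$, not $i-1$ --- so in the basis $(m(x_1),\ldots,m(x_p))$ the coordinate matrix of $(m(y_1),\ldots,m(y_p))$ is merely upper-triangular with possibly nonzero diagonal entries $a_{ii}$. Your parenthetical ``with the understanding that $m(y_i)$ contributes no component in the $m(x_i)$ direction'' is not justified by the hypothesis, and indeed in the companion Lemma~\ref{spanboule2} the $y_i$'s are constructed with no control on this diagonal. Consequently $\widehat m(\phi(t))=\widehat m(x_1\wedge\cdots\wedge x_p)\prod_i(1+t_ia_{ii})$ is generally nonconstant, and your bound $\sup_{|t_i|\leq r}|\widehat m(\phi(t))|\leq|\widehat m(x_1\wedge\cdots\wedge x_p)|$ fails --- exactly the obstacle you flagged, but which the hypothesis does \emph{not} remove.

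The paper's fix is to avoid the supremum altogether. After multiplying the $x_i$ and $y_i$ by unimodular constants one may assume $\widehat m(x_1\wedge\cdots\wedge x_p)>0$ and each $a_{ii}\geq0$; then for $\epsilon\in\{-1,1\}^p$ the quantity $\widehat m(\phi(r\epsilon))$ is real, and it is nonzero because $\Span(x_j+r\epsilon_jy_j)\subset\mathcal{C}$ forces $m$ to be injective on that span. By continuity on $[-r,r]^p$ from the value at $t=0$ it stays positive. Hence
\[
\sum_{\epsilon}\bigl|\widehat m(\phi(r\epsilon))\bigr|=\sum_{\epsilon}\widehat m(\phi(r\epsilon))=\widehat m\Bigl(\sum_{\epsilon}\phi(r\epsilon)\Bigr)=2^p\,\widehat m(x_1\wedge\cdots\wedge x_p),
\]
the last equality being the unsigned polarization $\sum_\epsilon(a_j+\epsilon_jb_j)_\wedge=2^p a_1\wedge\cdots\wedge a_p$. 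Combined with your triangle inequality and aperture bound this yields exactly $K^p/r^p$. So the remedy is not to make the diagonal vanish but to use positivity to turn a sum of absolute values into the absolute value of a sum.
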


\begin{proof}
We first observe that given $a_1,\cdots,a_p,b_1,\cdots,b_p \in E$, we have 
$$\underset{\varepsilon_1,\cdots,\varepsilon_p \in \{-1,1\}}{\sum}(a_1+\varepsilon_1b_1) \wedge \cdots \wedge (a_p + \varepsilon_p b_p) = 2^p a_1 \wedge \cdots \wedge a_p.$$
Then 

$$\begin{array}{rll}
2^p r^p \| y_1 \wedge \cdots \wedge y_p \| &= 2^p  \| ry_1 \wedge \cdots \wedge ry_p \| \\
&\leq \underset{\varepsilon_1,\cdots,\varepsilon_p \in \{-1,1\}}{\sum}\|(ry_1+\varepsilon_1x_1) \wedge \cdots \wedge (ry_p + \varepsilon_p x_p) \|  \\
&\leq \underset{\varepsilon_1,\cdots,\varepsilon_p \in \{-1,1\}}{\sum}\|(x_1+\varepsilon_1ry_1) \wedge \cdots \wedge (x_p + \varepsilon_p ry_p) \| \\
&\leq \frac{K^p}{\|m\|^p} \underset{\varepsilon_1,\cdots,\varepsilon_p \in \{-1,1\}}{\sum} |\widehat{m}((x_1+\varepsilon_1ry_1) \wedge \cdots \wedge (x_p + \varepsilon_p ry_p) )| \\
\end{array}$$

Por $i=1..p$, let's set $f_i=m(x_i) \in \mathbb{C}^p$. Possibly after multiplicating the $x_i$ by complex phases we may assume that $\widehat{m}(x_1\wedge \cdots \wedge x_p)=\det(f_1,\cdots,f_p) > 0$. Possibly after multiplicating the $y_i$ by complex phases we may assume that the matrix of $(m(y_1),\cdots, m(y_n))$ in this basis $(f_1,\cdots, f_n)$ is of the form 
$$ \begin{pmatrix}
a_{11} & * &* \\
 0 &\ddots & *\\
 0 &0 & a_{pp} 
\end{pmatrix}$$

with $a_{ii} > 0$.
Then for $(t_1,\cdots,t_p) \in [-r,r]^p$, $\widehat{m}((x_1+t_1y_1) \wedge \cdots \wedge (x_p+t_py_p))= \widehat{m}(x_1\wedge \cdots \wedge x_p) . \underset{1\leq i\leq p}{\Pi} (1+t_ia_{ii})$ is in $\mathbb{R}$. As this quantity is never equal to $0$, we even get $\widehat{m}((x_1+t_1y_1) \wedge \cdots \wedge (x_p+t_py_p)) \in \mathbb{R}_{+}$. We finally get 

$$\begin{array}{rll}
2^p r^p \| y_1 \wedge \cdots \wedge y_p \| &\leq \frac{K^p}{\|m\|^p} \underset{\varepsilon_1,\cdots,\varepsilon_p \in \{-1,1\}}{\sum} |\widehat{m}((x_1+\varepsilon_1 r y_1) \wedge \cdots \wedge (x_p + \varepsilon_p ry_p) )| \\
&= \frac{K^p}{\|m\|^p} \underset{\varepsilon_1,\cdots,\varepsilon_p \in \{-1,1\}}{\sum} \widehat{m}((x_1+\varepsilon_1ry_1) \wedge \cdots \wedge (x_p + \varepsilon_p ry_p) ) \\

&= \frac{K^p}{\|m\|^p} \widehat{m}\left(\underset{\varepsilon_1,\cdots,\varepsilon_p \in \{-1,1\}}{\sum} (x_1+\varepsilon_1ry_1) \wedge \cdots \wedge (x_p + \varepsilon_p ry_p) \right) \\

&= \frac{K^p}{\|m\|^p} \widehat{m}\left( 2^p x_1 \wedge \cdots \wedge x_p \right) \\

&\leq 2^p K^p \|x_1 \wedge \cdots \wedge x_p \|.
\end{array}$$
Hence $\| y_1 \wedge \cdots \wedge y_p \| \leq \frac{K^p}{r^p}\|x_1\wedge \cdots \wedge x_p\|$.
\end{proof}

In the next lemma, given a $p$-space $W \in \mathcal{C}[\rho]$, we find  $x=x_1 \wedge \cdots \wedge x_p$ representing $W$ and $y=y_1 \wedge \cdots \wedge y_p$ verifying properties close to the ones from the previous lemma.

\begin{lemma}
\label{spanboule2}
Let $W \in \mathcal{C}[\rho]$, $M \in \mathscr{M}$. Let $x \in \bigwedge\nolimits^pE$ representing $W$ and $y \in \bigwedge\nolimits^pE$ be a decomposable tensor. Then there exists decompositions $x=x_1 \wedge \cdots \wedge x_p$ and $y=y_1 \wedge \cdots \wedge y_p$ verifying the following properties (compare with Lemma \ref{spanboule1}) with $r= \dfrac{\rho}{p2^{p-1}} \left( \dfrac{\|x\|}{p!\|y\|} \right)^{\frac{1}{p}}$, i.e.:
\begin{itemize}
\item $m(My_i) \in \Span(m(Mx_1),\cdots,m(Mx_i))$ for all $i \in \{1,\cdots,p\}$ ;
\item for all $(t_1,\cdots,t_p) \in \mathbb{C}^p$ with $|t_i| \leq r$, $\Span(x_1 +t_1y_1,\cdots, x_p+t_py_p) \subset \mathcal{C}$.
\end{itemize}
\end{lemma}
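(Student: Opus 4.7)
The plan is to build both decompositions from right decompositions (in the sense of the paper) adapted to appropriate flags, and then verify the ball condition by a direct computation. We may assume $y \neq 0$ (otherwise every decomposition works), and write $V_y := \Span(y) \in \mathcal{G}_p(E)$.

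For $x$, I would take a right decomposition $(e_1,\dots,e_p)$ of $W$ with adapted linear forms $(l_i)$, and write $x = \alpha\, e_1 \wedge \cdots \wedge e_p$. Lemma \ref{base-norme} combined with Proposition \ref{prop1} yields $\|e_1 \wedge \cdots \wedge e_p\|_{\wedge,p} \geq p^{-p/2}$ on the one hand and $\leq 1$ on the other, whence $|\alpha| \geq \|x\|$. Setting $x_i := \alpha^{1/p}\, e_i$ gives $x = x_1 \wedge \cdots \wedge x_p$ with $\|x_i\| = |\alpha|^{1/p}$, and Lemma \ref{estimation-coordonnees} immediately yields the coordinate bound $|c_i| \leq 2^{i-1}\|v\|/|\alpha|^{1/p}$ for every $v = \sum c_i x_i \in W$.

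The main step is the $y$-decomposition: I would choose a right decomposition of $V_y$ adapted to the flag $F_i := \Span(m(Mx_1),\dots,m(Mx_i))$ in $\mathbb{C}^p$. Let $H_i := (m \circ M)^{-1}(F_i) \subset E$; since $\operatorname{codim}_E H_i \leq p - i$, one has $\dim(V_y \cap H_i) \geq i$. Then construct $(\tilde e_i)$ iteratively, at step $i$ picking $\tilde e_i \in V_y \cap H_i \cap \bigcap_{j<i} \ker \tilde l_j$ with $\|\tilde e_i\| = 1$ (this intersection is at least one-dimensional by a dimension count) and producing $\tilde l_i$ adapted via Hahn-Banach. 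Write $y = \beta\, \tilde e_1 \wedge \cdots \wedge \tilde e_p$; Lemma \ref{base-norme} and Proposition \ref{prop1} give $|\beta| \leq p^{p/2}\|y\|$. Setting $y_i := \beta^{1/p}\,\tilde e_i$, we get $\|y_i\| = |\beta|^{1/p} \leq p^{1/2}\|y\|^{1/p}$, and $m(My_i) = \beta^{1/p}\,m(M\tilde e_i) \in F_i$ since $\tilde e_i \in H_i$, so the first property holds.

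For the ball condition, $W \subset \mathcal{C}[\rho]$ gives $B(v,\rho\|v\|) \subset \mathcal{C}$ for every $v = \sum c_i x_i \in W$, so it suffices to prove $\|\sum c_i t_i y_i\| \leq \rho\|v\|$ whenever $|t_i| \leq r$. The triangle inequality combined with the bounds on $|c_i|$ and $\|y_i\|$ reduces this to $r(2^p - 1)|\beta|^{1/p}/|\alpha|^{1/p} \leq \rho$; after substituting $|\alpha| \geq \|x\|$, $|\beta| \leq p^{p/2}\|y\|$ and the prescribed value of $r$, this becomes the elementary inequality $(2^p - 1)p^{1/2} \leq p \cdot 2^{p-1} (p!)^{1/p}$, valid for every $p \geq 1$ by direct check (with equality at $p = 1$). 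The main obstacle is precisely the simultaneous construction of $(\tilde e_i)$ in the previous paragraph—a right decomposition of $V_y$ that also respects the flag $(H_i)$—for which the inequality $\dim(V_y \cap H_i) \geq i$ is the crucial input; once that is secured, the remaining work is mechanical.
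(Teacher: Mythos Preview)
Your proof is correct and follows essentially the same approach as the paper's: right decompositions of $W$ and of the space represented by $y$ with equal-norm factors, the latter built iteratively to respect the flag $\Span(m(Mx_1),\dots,m(Mx_i))$ via a dimension count, followed by the ball-inclusion check using Lemma~\ref{estimation-coordonnees}. The only cosmetic differences are that you package the existence of $\tilde e_i$ as a single dimension count (the paper splits into the injective/non-injective cases of $m\circ M$ on the running subspace) and that you track the $p^{p/2}$ constant from Proposition~\ref{prop1} explicitly, which is why you land on the elementary inequality $(2^p-1)p^{1/2}\le p\,2^{p-1}(p!)^{1/p}$ at the end.
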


\begin{proof}
Let $x=x_1 \wedge \cdots \wedge x_p$ be an adapted decomposition of $x$ with $\|x_1\|=\cdots=\|x_p\|$. Let's denote $(f_1,\cdots,f_p):=(m(Mx_1),\cdots,m(Mx_p))$ ; it's a basis of $\mathbb{C}^p$ as $W \subset \mathcal{C}$. If $y=0$ then the decomposition $y=0\wedge \cdots \wedge 0$ works. Else, let's consider $V$ the $p$-vector space represented by $y$, and let's build an adapted decomposition $(y_1,\cdots,y_p)$ of $V$ such that $m(My_i) \in \Span(f_1,\cdots,f_i)$ for all $i \in \{1,\cdots,p\}$.

If $y_1,\cdots,y_i$ are built with linear forms $(l_1,\cdots,l_i)$ on $V$, we consider the restriction $m_i$ of $m  \circ M$ to $V \cap \ker l_1 \cap \cdots \cap \ker l_i$ : $$m_i : V \cap \ker l_1 \cap \cdots \cap \ker l_i \rightarrow \Span(f_1,\cdots,f_p).$$
If $m_i$ is not injective, then we take $y_{i+1} \in \ker m_i \setminus\{0\}$ and $l_{i+1}$ a linear form associated to $y_{i+1}$. If $m_i$ is injective, then $\dim m_i(V \cap \ker l_1 \cap \cdots \cap \ker l_i) \geq p - i$ hence $m_i(V \cap \ker l_1 \cap \cdots \cap \ker l_i) \cap \Span(f_1,\cdots,f_{i+1}) \neq \{0\}$. Thus we can take $y_{i+1} \neq 0$ in $V \cap \ker l_1 \cap \cdots \cap \ker l_i$ such that $m\circ M (y_{i+1}) \in \Span(f_1,\cdots,f_{i+1})$, and $l_{i+1}$ a linear form associated to $y_{i+1}$.

We can now write $y=y_1 \wedge \cdots \wedge y_p$ with $(y_1,\cdots,y_p)$ an adapted decomposition of $V$ such that $m(y_i) \in \Span(f_1,\cdots,f_i)$ for all $i \in \{1,\cdots,p\}$ and $\|y_1\| = \cdots = \|y_p\|$. Let $(t_1,\cdots,t_p) \in \mathbb{C}^p$ with $|t_i| \leq r$ for all $i$, and $u=\underset{i}{\sum} \lambda_i (x_i + t_i y_i) \in \Span(x_1 +t_1y_1,\cdots, x_p+t_py_p)$. We want to show that $u \in \mathcal{C}$, by using the fact that $\Span(x_1,\cdots, x_p) \subset \mathcal{C}[\rho]$. As $|\lambda_i|.\|x_i\| \leq 2^{i-1} \|\underset{j}{\sum} \lambda_j x_j\|$ by Lemma \ref{estimation-coordonnees}, we get 
\begin{align}
\label{eq}
\|\underset{i}{\sum} \lambda_i t_i y_i \| \leq \underset{i}{\sum} |\lambda_i|.| t_i|.\| y_i\|
\leq  \underset{i}{\sum} 2^{i-1}.| t_i|.\dfrac{\| y_i\|}{\|x_i\|}.\|\underset{j}{\sum} \lambda_j x_j\|.
\end{align}

As $\|x_1\|=\cdots=\|x_p\|$, $\|y_1\|=\cdots=\|y_p\|$ and these are adapted decompositions, we get 
$$\|y_i\|^p=\|y_1\|\cdots\|y_p\| \leq \|y\| \leq \left( \dfrac{\rho}{rp2^{p-1}} \right)^p \dfrac{\|x\|}{p!} \leq \left( \dfrac{\rho}{rp2^{p-1}} \right)^p \|x_1\|\cdots\|x_p\| \leq \left( \dfrac{\rho}{rp2^{p-1}} \|x_i\|\right)^p $$
Finally, inequality (\ref{eq}) becomes
$$\begin{array}{rll}
\|\underset{i}{\sum} \lambda_i t_i y_i \| &\leq  \underset{i}{\sum} 2^{i-1}.\dfrac{| t_i|}{r}.\dfrac{\rho}{p2^{p-1}}.\|\underset{j}{\sum} \lambda_j x_j\| \leq \rho.\|\underset{j}{\sum} \lambda_j x_j\|.
\end{array}
$$

Using $\underset{j}{\sum} \lambda_j x_j \in \mathcal{C}[\rho]$, we therefore get $u=\underset{i}{\sum} \lambda_i (x_i + t_i y_i) \in \mathcal{C}$, hence $\Span(x_1 +t_1y_1,\cdots, x_p+t_py_p) \subset \mathcal{C}$, which achieves the proof.

\end{proof}

We can now use the two previous lemmas to get the following:

\begin{lemma}
\label{encadrementnormeM}
Let $M \in \mathscr{M}$, $W \in \mathcal{C}[\rho]$ and $x \in \bigwedge\nolimits^p E$ representing $W$. Then
$$ \dfrac{1}{K^p} \left| \dfrac{\widehat{m}(\widehat{M}x)}{\widehat{m}(x)} \right| \leq \|\widehat{M}\| \leq p! \left( \dfrac{2^{p-1}K^2}{\rho^p} \right)^p \left| \dfrac{\widehat{m}(\widehat{M}x)}{\widehat{m}(x)} \right|$$
\end{lemma}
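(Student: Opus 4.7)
The plan is to establish the two inequalities separately. For the lower bound, the key ingredients are the operator norm estimate $\|\widehat{m}\| \le \|m\|^p = 1$ from Proposition \ref{normeproduit} together with the wedge-level aperture bound on decomposable tensors representing subspaces of $\mathcal{C}$. For the upper bound, the strategy is to use Lemma \ref{normetenseur} to reduce to decomposable test tensors $y$, then apply Lemma \ref{spanboule2} to $W, M, x, y$ to produce decompositions whose images under $M$ satisfy the hypotheses of Lemma \ref{spanboule1}, which then yields the estimate.

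For the lower bound, I would first write
$$|\widehat{m}(\widehat{M}x)| \;\leq\; \|\widehat{m}\|\,\|\widehat{M}x\|_{\wedge,p} \;\leq\; \|\widehat{M}\|\,\|x\|_{\wedge,p},$$
using Proposition \ref{normeproduit}. In the reverse direction, since $x$ represents $W \subset \mathcal{C}$, one can choose, as in the proof of Lemma \ref{aperture-2}, a basis $(x_1,\dots,x_p)$ of $W$ with $m(x_i) = e_i$, yielding $\widehat{m}(x_1\wedge\cdots\wedge x_p) = 1$ and $\|x_i\| \leq K$, hence $\|x_1\wedge\cdots\wedge x_p\|_{\wedge,p} \leq K^p$. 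By homogeneity this gives the projective estimate $\|x\|_{\wedge,p} \leq K^p |\widehat{m}(x)|$ for any decomposable representative of $W$. Combining the two inequalities yields $|\widehat{m}(\widehat{M}x)/\widehat{m}(x)| \leq K^p \|\widehat{M}\|$, which is the desired lower bound.

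For the upper bound, Lemma \ref{normetenseur} reduces the problem to bounding $\|\widehat{M}y\|_{\wedge,p}$ for decomposable tensors $y = y_1 \wedge \cdots \wedge y_p$ with $\|y\|_{\wedge,p} = 1$. Fix such $y$. Lemma \ref{spanboule2} applied to $W$, $M$, $x$ and $y$ furnishes decompositions $x = x_1 \wedge \cdots \wedge x_p$, $y = y_1 \wedge \cdots \wedge y_p$ and $r = \frac{\rho}{p2^{p-1}}(\|x\|/(p!\|y\|))^{1/p}$ satisfying $m(My_i) \in \Span(m(Mx_1),\dots,m(Mx_i))$ and $\Span(x_1+t_1y_1,\dots,x_p+t_py_p) \subset \mathcal{C}$ for $|t_i| \leq r$. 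Since $M$ is linear and sends $\mathcal{C}$ into itself, the second condition transfers to $\Span(Mx_1+t_1 My_1, \dots, Mx_p+t_p My_p) \subset \mathcal{C}$, so Lemma \ref{spanboule1}, applied to $(Mx_1,\dots,Mx_p)$ and $(My_1,\dots,My_p)$ with the same $r$, yields
$$\|\widehat{M}y\|_{\wedge,p} \;=\; \|My_1 \wedge \cdots \wedge My_p\|_{\wedge,p} \;\leq\; \frac{K^p}{r^p}\,\|\widehat{M}x\|_{\wedge,p}.$$
Because $\widehat{M}x$ represents $M(W) \subset \mathcal{C}$, the same wedge aperture bound used above gives $\|\widehat{M}x\|_{\wedge,p} \leq K^p |\widehat{m}(\widehat{M}x)|$, and the trivial bound $|\widehat{m}(x)| \leq \|\widehat{m}\|\,\|x\|_{\wedge,p} \leq \|x\|_{\wedge,p}$ lets me replace $1/\|x\|_{\wedge,p}$ by $1/|\widehat{m}(x)|$ when expanding the factor $1/r^p$. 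Substituting $\|y\|_{\wedge,p}=1$ and the explicit value of $r$ produces an inequality of the claimed shape, up to tracking the numerical constant.

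The main obstacle is the last step of constant-tracking: the radius $r$ from Lemma \ref{spanboule2} depends on the non-projective quantity $\|x\|/\|y\|$, so one must carefully absorb $\|x\|$ into $|\widehat{m}(x)|$ to obtain an inequality that is projective in $x$, as required (both sides $\|\widehat{M}\|$ and $|\widehat{m}(\widehat{M}x)/\widehat{m}(x)|$ are representative-independent). Apart from this bookkeeping, no new argument is needed: the proof is a synthesis of Lemmas \ref{normetenseur}, \ref{normeproduit}, \ref{spanboule1}, \ref{spanboule2} and the aperture bound.
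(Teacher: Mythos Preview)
Your proposal is correct and follows essentially the same route as the paper: the lower bound via $|\widehat{m}(\widehat{M}x)| \le \|\widehat{m}\|\,\|\widehat{M}\|\,\|x\|$ together with the wedge aperture estimate $\|x\|\le K^p|\widehat m(x)|$, and the upper bound via Lemma~\ref{normetenseur} to reduce to decomposable $y$, then Lemma~\ref{spanboule2} to produce decompositions whose $M$-images satisfy the hypotheses of Lemma~\ref{spanboule1}, followed by the aperture bounds $\|\widehat{M}x\|\le K^p|\widehat m(\widehat M x)|$ and $|\widehat m(x)|\le \|x\|$ to make the resulting inequality projective in $x$. The only remaining work is the algebraic bookkeeping of constants, exactly as you note.
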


\begin{proof}
Let $y \in \bigwedge\nolimits^pE$ be a decomposable tensor. By Lemma~\ref{spanboule2}, we get decompositions \linebreak$x=x_1 \wedge \cdots \wedge x_p$ and $y=y_1 \wedge \cdots \wedge y_p$ such that the decompositions $\widehat{M}x= Mx_1 \wedge \cdots \wedge Mx_p$ and $\widehat{M}y=My_1 \wedge \cdots \wedge My_p$ satisfy the hypothesis of Lemma~\ref{spanboule1} for $r=\frac{\rho}{p2^{p-1}} \left( \frac{\|x\|}{p!\|y\|} \right)^{\frac{1}{p}}$. This gives

$$\| \widehat{M}y \| \leq \dfrac{p!(2^{p-1}K)^p}{\rho^p}\dfrac{\|y\|}{\|x\|}.\|\widehat{M}x\| \leq \dfrac{p!(2^{p-1}K)^p}{\rho^p} .K^p.\left|\dfrac{\widehat{m}(\widehat{M}x)}{\widehat{m}(x)}\right|.\|y\|$$

where the last inequality commes from the properties of $\widehat{m}$. By Lemma~\ref{normetenseur}, $\|\widehat{M}\|$ is given by its supremum over the decomposable tensors, hence $\|\widehat{M}\| \leq p! \left( \frac{2^{p-1}K^2}{\rho^p} \right)^p \left| \frac{\widehat{m}(\widehat{M}x)}{\widehat{m}(x)} \right|$. The properties of $\widehat{m}$ also give $|\widehat{m}(\widehat{M}x) |\leq \|\widehat{M}\|.\|\widehat{m}\|.\|x\| \leq  \|\widehat{M}\|.K^p |\widehat{m}(x)|$.
\end{proof}
~ \\ ~ \\

Before going on in our proof, we need to define $\mathscr{E}(\Omega,G_p(\mathcal{C}))$ (the set of mesurable maps from $\Omega$ to $G_p(\mathcal{C})$) and what it means to be analytic on this space. As neither $G_p(E)$ or $\bigwedge\nolimits^pE$ are Banach spaces, the analyticity has to come from the manifold structure, i.e. from the charts on $G_p(E)$. Fortunately in our case, when studying $G_p(\mathcal{C})$, we can restrict the space we look at to a single chart. 

We have something even better: there exists a chart of $G_p(E)$ which covers $G_p(\mathcal{C})$ and in which the image of $G_p(\mathcal{C})$ is bounded. Indeed, let $V_0=\ker m$ and $W_0 \in G_p(\mathcal{C})$. Then $W_0 \cap V_0 = \{0\}$ and given the dimension and co-dimension, we have $E=W_0 \oplus V_0$ (and they are topological supplements as they are closed). For $W \in  G_p(\mathcal{C})$, $W$ is a supplement of $V_0$, the chart $A \in \mathcal{L}(W_0,V_0) \mapsto \graph A \in G_p(E)$ covers $G_p(\mathcal{C})$. Moreover, if $\graph A \in G_p(\mathcal{C})$, then for $x \in W_0$ we have $x+Ax \in \mathcal{C}$ and
$$\|Ax\| \leq \|x+Ax\| + \|x\| \leq |m(x+Ax)| +\|x\| = |m(x)| +\|x\| \leq (K+1)\|x\|.$$

Therefore the image of $G_p(\mathcal{C})$ in the chart $\mathcal{L}(W_0,V_0)$ is bounded by $(K+1)$. Now let $\mathscr{E}(\Omega,G_p(\mathcal{C}))$ be the set of measurable maps from $\Omega$ to $G_p(\mathcal{C})$. Then by identifying $G_p(\mathcal{C})$ and $\mathcal{L}(W_0,V_0)$, we can consider $\mathscr{E}(\Omega,G_p(\mathcal{C}))$ as a subset of $\mathscr{B}(\Omega, \mathcal{L}(W_0,V_0))$, thus giving it an analytical Banach space structure. We define the structure of $\mathscr{E}(\Omega,\widehat{\mathcal{C}}_{m=1})$ by the identification $\widehat{\mathcal{C}}_{m=1} \simeq G_p(\mathcal{C})$.

We can now define our objects properly and formulate a fixed point theorem.
\begin{defi}
\label{defipit}
For $t \in \mathbb{D}$, we define:
$$\pi_t :\left\{ \begin{array}{ccc}
\mathscr{E}(\Omega,G_p(\mathcal{C})) &\rightarrow &\mathscr{E}(\Omega,G_p(\mathcal{C})) \\
(V_{\omega})_{\omega\in\Omega} &\mapsto &\pi_t(V): \omega \mapsto \pi_{M_{\omega}(t)}(V_{\tau \omega})= M_{\omega}(t) (V_{\tau \omega})
\end{array} \right.$$

Or equivalently,

$$\pi_t :\left\{ \begin{array}{ccc}
"\mathscr{E}(\Omega,\widehat{\mathcal{C}}_{m=1})" &\rightarrow &"\mathscr{E}(\Omega,\widehat{\mathcal{C}}_{m=1})" \\
(v_{\omega})_{\omega\in\Omega} &\mapsto &\pi_t(v): \omega \mapsto \pi_{M_{\omega}(t)}(v_{\tau \omega})= \dfrac{\widehat{M_{\omega}}(t) (v_{\tau \omega})}{\widehat{m}(\widehat{M_{\omega}}(t) (v_{\tau \omega}))}
\end{array} \right.$$

\end{defi}

\begin{lemma}
\label{pointfixe}
For $t \in \mathbb{D}$, $\pi_t$ has a unique fixed point $V^*(t)$ in $\mathscr{E}(\Omega,G_p(\mathcal{C}))$ - or equivalently, $v^*(t)$ in $\mathscr{E}(\Omega,\widehat{\mathcal{C}}_{m=1})$.
\end{lemma}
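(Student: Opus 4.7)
The plan is to run a Banach-style fixed-point argument using the quantitative contraction of Lemma \ref{contractionpim}, working with the representation $\widehat{\mathcal{C}}_{m=1} \simeq G_p(\mathcal{C})$ so that the iterates live inside the Banach space $\bigwedge\nolimits^p E$ and convergence can be measured by the essentially uniform norm. The alternative, namely putting a metric directly on $\mathscr{E}(\Omega, G_p(\mathcal{C}))$, would force one to argue through the chart $\mathcal{L}(W_0, V_0)$ and track how $\pi_{M_\omega(t)}$ acts there; routing everything through $\widehat{\mathcal{C}}_{m=1}$ is cleaner because Lemma \ref{contractionpim} is already stated in $\|\cdot\|_{\wedge, p}$.

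First I would verify that $\pi_t$ sends $\mathscr{E}(\Omega, \widehat{\mathcal{C}}_{m=1})$ to itself. Measurability of $\omega \mapsto \pi_{M_\omega(t)}(v_{\tau\omega})$ follows from the measurability assumption on $(t, \omega) \mapsto M_\omega(t)$, from the continuity of $M \mapsto \pi_M$ on $\mathscr{M}$ (an immediate consequence of Definition \ref{defipim}, using that $\widehat{m}(\widehat{M}v) \neq 0$ whenever $M \in \mathscr{M}$ and $v \in \widehat{\mathcal{C}}_{m=1}$), and from the measurability of $\tau$. The fact that the value lies in $\widehat{\mathcal{C}}_{m=1}$ is built into the normalization by $\widehat{m}$ and the inclusion $M_\omega(t)(\mathcal{C}^*) \subset \mathcal{C}^*$.

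The core step is the Cauchy estimate. Pick any starting point $v^{(0)} \in \mathscr{E}(\Omega, \widehat{\mathcal{C}}_{m=1})$, for instance the constant section associated with some fixed $W_0 \in G_p(\mathcal{C})$, and set $v^{(n)} := \pi_t^n v^{(0)}$. Using the semigroup identity $\pi_{M_\omega(t)} \circ \pi_{M_{\tau \omega}(t)} \circ \cdots \circ \pi_{M_{\tau^{n-1}\omega}(t)} = \pi_{M_\omega^{(n)}(t)}$, one rewrites $\pi_t^n(v)_\omega = \pi_{M_\omega^{(n)}(t)}(v_{\tau^n \omega})$, and applying Lemma \ref{contractionpim} to $v_{\tau^n \omega}$ and $(\pi_t v)_{\tau^n \omega}$ yields
\[
\|v^{(n+1)}_\omega - v^{(n)}_\omega\|_{\wedge, p} \leq C \eta^n
\]
with $C > 0$ independent of $\omega$ and $n$. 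Summing the geometric series shows that $(v^{(n)})$ is Cauchy in $\mathscr{B}(\Omega, \bigwedge\nolimits^p E)$.

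To conclude, one extracts the limit and checks that it is a fixed point. Completeness of $\bigwedge\nolimits^p E$ for $\|\cdot\|_{\wedge, p}$ together with the essentially uniform Cauchy property gives a measurable limit $v^*$. By Proposition \ref{cv exterieur} the set of decomposable tensors is closed, and the affine constraint $\widehat{m}(\cdot) = 1$ together with the closedness of $\mathcal{C}$ shows that $\widehat{\mathcal{C}}_{m=1}$ is a closed subset of $\bigwedge\nolimits^p E$, hence $v^*_\omega \in \widehat{\mathcal{C}}_{m=1}$ for $\mu$-a.e. $\omega$. Passing to the limit in $v^{(n+1)} = \pi_t(v^{(n)})$ using the continuity of $\pi_{M_\omega(t)}$ on $\widehat{\mathcal{C}}_{m=1}$ gives $\pi_t(v^*) = v^*$. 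For uniqueness, if $v^*, w^*$ are both fixed points then Lemma \ref{contractionpim} applied to $v^*_{\tau^n \omega}, w^*_{\tau^n \omega}$ gives $\|v^*_\omega - w^*_\omega\|_{\wedge, p} = \|\pi_t^n(v^*)_\omega - \pi_t^n(w^*)_\omega\|_{\wedge, p} \leq C \eta^n$ for every $n$, hence $v^* = w^*$ essentially everywhere.

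I expect the main subtlety to lie not in the contraction itself (directly supplied by Lemma \ref{contractionpim}) but in handling measurability in the limit and in confirming that the closure of $\widehat{\mathcal{C}}_{m=1}$ inside $\bigwedge\nolimits^p E$ prevents the iterates from drifting out of the space on which $\pi_t$ is well-defined. Working with the uniform norm on $\bigwedge\nolimits^p E$ rather than on the manifold $G_p(\mathcal{C})$ is what makes both points routine.
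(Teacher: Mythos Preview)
Your proposal is correct and follows essentially the same contraction argument as the paper: iterate $\pi_t$ from a constant section, invoke Lemma~\ref{contractionpim} to obtain a Cauchy sequence, use closedness of the target to land the limit, and read off existence and uniqueness from continuity and the same lemma. The only cosmetic difference is that the paper phrases everything on the $G_p(\mathcal{C})$ side (using completeness of $G_p(\mathcal{C})$ via closedness of $\mathcal{C}$), whereas you stay on the $\widehat{\mathcal{C}}_{m=1}$ side and appeal to Proposition~\ref{cv exterieur}; since the two are identified by the bi-Lipschitz homeomorphism of Proposition~\ref{homeo grassmannienne}, this is the same proof with extra care taken on measurability and closedness.
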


\begin{proof}
Let $t\in \mathbb{D}$. By Lemma \ref{contractionpim}, there exists $C>0$ such that $\textnormal{diam}( \pi_t^{n}(\mathscr{E}(\Omega,G_p(\mathcal{C}))) \leq C\eta^n$. Pick $V_0 \in G_p(\mathcal{C})$ and define $V^0(t)_\omega = V_0$ for all $\omega \in \Omega$. Then the sequence defined by $V^{n+1}(t)=\pi^t(V^n(t))$ is Cauchy. As $\mathcal{C}$ is closed, $G_p(\mathcal{C})$ is a complete metric space and therefore $V^n(t)$ converges to some $V^*(t) \in \mathscr{E}(\Omega,G_p(\mathcal{C}))$. As $\pi_t$ is continuous, $V^*(t)$ is a fixed point and the uniqueness follows from Lemma~\ref{contractionpim}.
\end{proof}

We will now use this fixed point and the previous lemmas to compute $\chi_p(t)$. Then we will show that $t  \mapsto v^*(t)$ is analytic and then conclude with the real-harmonicity of $t \mapsto \chi_p(t)$.

\begin{defi}
\label{defi p}
We define the map $$p : \left\{ \begin{array}{ccl}
\mathbb{D} &\rightarrow &\mathscr{E}(\Omega,\mathbb{C}) \\
t &\mapsto	&\dfrac{\widehat{m}(\widehat{M}_\omega(t)v^*_{\tau \omega}(t))}{\widehat{m}(v^*_{ \omega}(t))} =\widehat  {m}(\widehat{M}_\omega(t)v^*_{\tau \omega}(t))
\end{array} \right.$$
\end{defi}

\begin{lemma}
\label{chi int}
For $t \in \mathbb{D}$, $\chi_p(t)=\int \log |p_{\omega}(t)| d\mu(\omega)$.	
\end{lemma}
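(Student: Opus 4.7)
The plan is to exploit the fixed-point identity $\pi_t(v^*(t))=v^*(t)$ in order to telescope $\widehat{M}_\omega^{(n)}(t)$ along the orbit of $\omega$, and then invoke Birkhoff's ergodic theorem. Since $\widehat{m}(v^*_\omega(t))=1$ (because $v^*_\omega(t)\in\widehat{\mathcal{C}}_{m=1}$), the fixed-point identity together with Definition~\ref{defipit} unfolds as
$$\widehat{M}_\omega(t)\,v^*_{\tau\omega}(t) = p_\omega(t)\cdot v^*_\omega(t),$$
and iterating along $\omega,\tau\omega,\dots,\tau^{n-1}\omega$ yields
$$\widehat{M}_\omega^{(n)}(t)\,v^*_{\tau^n\omega}(t) = \Pi_n(\omega,t)\cdot v^*_\omega(t),\qquad \Pi_n(\omega,t):=\prod_{k=0}^{n-1} p_{\tau^k\omega}(t).$$

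I would then apply Lemma~\ref{encadrementnormeM} to the composed operator $M_\omega^{(n)}(t)$, which belongs to $\mathscr{M}$ (the class $\mathscr{M}$ of Definition~\ref{ensemble M} is stable under composition), the subspace $V^*_{\tau^n\omega}(t) = M_{\tau^n\omega}(t)(V^*_{\tau^{n+1}\omega}(t)) \in M_{\tau^n\omega}(t)(\mathcal{C}) \subset \mathcal{C}[\rho]$, and the representative $v^*_{\tau^n\omega}(t)$. Using $\widehat{m}(v^*_{\tau^n\omega}(t))=1$, the resulting two-sided bound reads
$$\frac{|\Pi_n(\omega,t)|}{K^p}\;\leq\; \|\widehat{M}_\omega^{(n)}(t)\|\;\leq\; C_1\,|\Pi_n(\omega,t)|,$$
with $C_1 = p!\,(2^{p-1}K^2/\rho^p)^p$ independent of $\omega$, $n$ and $t$. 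Dividing by $n$ and taking the logarithm absorbs the constants into an $O(1/n)$ error, so it suffices to establish
$$\frac{1}{n}\sum_{k=0}^{n-1}\log|p_{\tau^k\omega}(t)|\;\xrightarrow[n\to\infty]{}\;\int_\Omega\log|p_\omega(t)|\,d\mu(\omega)\quad (\mu\text{-a.e. }\omega),$$
which follows from Birkhoff's ergodic theorem and the $\mu$-ergodicity of $\tau$, provided $\log|p_\omega(t)|\in L^1(\mu)$.

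The main obstacle is therefore this $L^1$ integrability. Specialising Lemma~\ref{encadrementnormeM} to $n=1$ shows that $|p_\omega(t)|$ is comparable to $\|\widehat{M}_\omega(t)\|$ with multiplicative constants depending only on $p$, $K$ and $\rho$, so integrability of $\log|p_\omega(t)|$ reduces to that of $\log\|\widehat{M}_\omega(t)\|$. Proposition~\ref{normeproduit} gives $\|\widehat{M}_\omega(t)\|\leq\|M_\omega(t)\|^p$, and hypothesis~(3) yields $\log\|M_\omega(0)\|\in L^1(\mu)$. To transfer the control from $t=0$ to any $t\in\mathbb{D}$, I would use hypothesis~(2): combining it with the inequality $\|\bigwedge^p M\|\leq\|M\|\,\|\bigwedge^{p-1}M\|$ from Proposition~\ref{normeproduit} gives $\|M'_\omega(t)\|\leq C\|M_\omega(t)\|$ for a universal constant $C$; a Grönwall-type argument applied to $u\mapsto\|M_\omega(ut)\|$ along a straight segment in $\mathbb{D}$ then produces the uniform bound $|\log\|M_\omega(t)\|-\log\|M_\omega(0)\||\leq C|t|\leq C$ in $\omega\in\Omega$, which carries the $L^1$ bound from $t=0$ to every $t\in\mathbb{D}$ and completes the proof.
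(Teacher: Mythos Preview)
Your proposal is correct and follows essentially the same route as the paper: telescope $\widehat{M}_\omega^{(n)}$ via the fixed-point identity, sandwich $\|\widehat{M}_\omega^{(n)}\|$ between constant multiples of $|\Pi_n|$ using Lemma~\ref{encadrementnormeM}, and conclude with Birkhoff once $\log|p_\omega(t)|\in L^1$. You are somewhat more explicit than the paper in justifying that $\mathscr{M}$ is closed under composition and in spelling out the Gr\"onwall step that carries the $L^1$ bound from $t=0$ to general $t$ (the paper compresses this into the phrase ``dominated by $\|M_\omega(0)\|^p$''), but the argument is the same.
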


\begin{proof}
As $v^*(t)$ is a fixed point of $\pi^y$, $v^*(t) \in M(\mathcal{C}) \subset \mathcal{C}[\rho]$. By Lemma \ref{encadrementnormeM}, there exists $C1,C2 >0$ such that for all $\omega \in \Omega, t \in \mathbb{D}$,
$$ C_1 \left| \widehat{m}\left(\widehat{M_\omega(t)}v^*_{\tau \omega}(t)\right) \right| \leq \|\widehat{M}_\omega (t)\| \leq C_2 \left| \widehat{m}\left(\widehat{M_\omega(t)}v^*_{\tau \omega}(t)\right) \right|.$$

This means that $|p_{\omega}(t)|$ is equivalent to $\|\widehat{M}_\omega (t)\|\leq \|M_\omega (t)\|^p$. By Assumption 2 of Theorem \ref{theorem produit}, this is dominated by $\|M_\omega (0)\|^p$, which is log-integrable by Assumption 3 of the same theorem. This proves $(\omega \mapsto \log |p_{\omega}(t)|) \in L^1(\Omega,\mu)$. Applying Lemma \ref{encadrementnormeM} to $M^{(n)}_\omega(t)$ and $v^*_{\tau^n \omega}(t)$, we get
$$ C_1 \left| \widehat{m}\left(\widehat{M^{(n)}_\omega(t)}v^*_{\tau^n \omega}(t)\right) \right| \leq \|\widehat{M^{(n)}}_\omega (t)\| \leq C_2 \left| \widehat{m}\left(\widehat{M^{(n)}_\omega(t)}v^*_{\tau^n \omega}(t)\right) \right|.$$
Then
$$\begin{array}{lll}
\dfrac{1}{n} \log \left\| \widehat{M_\omega^{(n)}(t)} \right\| &= &\dfrac{1}{n} \log \left| {\widehat{m}(\widehat{M}_\omega^{(n)}(t)v^*_{\tau^n \omega}(t))} \right| +O\left( \dfrac{1}{n}\right) \\
&=&\dfrac{1}{n} \log  \left|\underset{k=0}{\overset{n-1}{\Pi}} \widehat{m}(\widehat{M}_\omega^{(k+1)}(t)v^*_{\tau^{k+1} \omega}(t)) \right| +O\left( \dfrac{1}{n}\right) \\
&= &\dfrac{1}{n} \underset{k=0}{\overset{n-1}{\sum}} \log |p_{\tau^k \omega}(t)| + O\left( \dfrac{1}{n}\right).
\end{array}$$

As $(\omega \mapsto \log |p_{\omega}(t)|) \in L^1(\Omega,\mu)$ and $\tau$ is ergodic, Birkhoff's ergodic theorem gives \linebreak $\dfrac{1}{n} \log \left\| \widehat{M_\omega^{(n)}(t)} \right\| \tendn \int \log |p_{\omega}(t)| d\mu(\omega)$, which is the desired result.
\end{proof}

\begin{lemma}
\label{pim differentiable}

Let $W \in \mathcal{C}[\rho]$ and $x = G^ {-1}(G_p(\mathcal{C})) \in \bigwedge\nolimits^pE$ representing $W$. Let $h \in \bigwedge\nolimits^pE$ such that $x+h \in G^ {-1}(G_p(\mathcal{C}))$. Then
$$\|\pi^{(n)}(x+h) - \pi^{(n)}(x) \| \leq C.\eta^n \|h\| + o(\|h\|) $$
as $h \rightarrow 0$, where $C$ is a constant depending only on $p$, $K$, $\|m\|$ and $\rho$.
\end{lemma}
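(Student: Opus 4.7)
The plan is to apply Cauchy's integral formula to $\pi^{(n)}$ composed with a holomorphic chart around $x$. The hypothesis $W \in \mathcal{C}[\rho]$ will provide a complex disk of radius $\sim \rho$ on which this chart lands in $\widehat{\mathcal{C}}_{m=1}$, and the uniform diameter bound $C\eta^n$ from Lemma~\ref{contractionpim} will then translate via Cauchy into a derivative bound of order $\eta^n/\rho$, from which the desired Lipschitz estimate follows.

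To build the chart, let $V_0 = \ker m$. The aperture bound forces $m_{|W}$ to be injective, so $E = W \oplus V_0$. Pick a basis $(e_1, \dots, e_p)$ of $W$ with $m(e_i) = f_i$ (the standard basis of $\mathbb{C}^p$); the aperture bound then gives $\|e_i\| \leq K/\|m\|$. Define
$$\iota : \mathcal{L}(W, V_0) \longrightarrow \bigwedge\nolimits^p E, \qquad \iota(A) = (e_1 + Ae_1) \wedge \cdots \wedge (e_p + Ae_p).$$
This map is polynomial in $A$, hence entire, with $\iota(0) = x$; and since $Ae_i \in \ker m$, $\widehat{m}(\iota(A)) = \det(m(e_i)) = 1$. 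Moreover, if $\|A\| \leq \rho$ then for every $u \in W$, $u + Au \in B(u, \rho\|u\|) \subset \mathcal{C}$ (as $W \in \mathcal{C}[\rho]$), so $\graph(A) \in G_p(\mathcal{C})$ and hence $\iota(B_\rho(0)) \subset \widehat{\mathcal{C}}_{m=1}$.

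Now set $F_n := \pi^{(n)} \circ \iota : B_\rho(0) \to \widehat{\mathcal{C}}_{m=1}$. Each $\pi_{M_k}$ is a rational, holomorphic map on a neighborhood of $\widehat{\mathcal{C}}_{m=1}$, so $F_n$ is holomorphic on $B_\rho(0)$. By Lemma~\ref{contractionpim}, $F_n(B_\rho(0))$ has diameter at most $C_0 \eta^n$ in $\|\cdot\|_{\wedge, p}$, with $C_0$ depending only on $p$, $K$, $\|m\|$ and $\Delta$. For $B \in \mathcal{L}(W, V_0)$ with $\|B\| \leq 1$, the map $\zeta \mapsto F_n(\zeta B) - F_n(0)$ is holomorphic on $|\zeta| < \rho$, vanishes at $0$, and is bounded by $C_0 \eta^n$; Cauchy's integral formula on the circle $|\zeta| = \rho/2$ gives
$$\|DF_n(0) \cdot B\|_{\wedge, p} = \left\| \frac{1}{2\pi i} \oint_{|\zeta| = \rho/2} \frac{F_n(\zeta B) - F_n(0)}{\zeta^2} \, d\zeta \right\|_{\wedge, p} \leq \frac{2 C_0 \eta^n}{\rho}.$$
For $h$ small with $x + h \in \widehat{\mathcal{C}}_{m=1}$, the explicit inverse of $\iota$ on its image produces $B := \iota^{-1}(x + h)$ with $\|B\| \leq C_1 \|h\|$; combined with the chain rule this yields $\pi^{(n)}(x + h) - \pi^{(n)}(x) = DF_n(0) \cdot B + o(\|B\|)$, hence the stated estimate with $C = 2 C_0 C_1 / \rho$.

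The main technical obstacle is checking that the Lipschitz constant $C_1$ of $\iota^{-1}$ at $x$ depends only on $p$, $K$, $\|m\|$. This is handled directly: $D\iota(0) \cdot A = \sum_i e_1 \wedge \cdots \wedge Ae_i \wedge \cdots \wedge e_p$, and one extracts $Ae_i$ by pairing with an exterior functional of the form $l_1 \wedge \cdots \wedge l_{i-1} \wedge \ell_i \wedge l_{i+1} \wedge \cdots \wedge l_p$, where $l_j(y) := (m(y))_j$ has norm at most $\|m\|$ and $\ell_i$ is a norm-one linear form adapted to $Ae_i$; such a pairing isolates $\|Ae_i\|$ and yields $\|A\| \lesssim \|D\iota(0) \cdot A\|_{\wedge, p}$ with a constant depending only on the stated parameters.
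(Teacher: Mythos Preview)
Your approach via Cauchy's integral formula is correct and genuinely different from the paper's argument. The paper proceeds directly through the cone geometry: it bounds $\|\pi^{(n)}(x+h)-\pi^{(n)}(x)\|$ by $p!\,K^p\|m\|^{-p}\,d_{\mathcal{C}}(M^{(n)}W,M^{(n)}W_h)$ via Lemma~\ref{aperture-2}, extracts the factor $\eta^n$ from the cone contraction (Proposition~\ref{contraction}), and then estimates $d_{\mathcal{C}}(W,W_h)$ successively by $d_{Gr}(W,W_h)$, by $d_{\wedge}(W,W_h)$ (Theorem~\ref{metriques-equivalentes}), and finally by $\|h\|$. You instead treat Lemma~\ref{contractionpim} as a uniform diameter bound on the image of $\pi^{(n)}$ and convert it into a derivative bound by Cauchy, after parametrizing a neighborhood of $x$ by a holomorphic chart $\iota$ whose image lands in $\widehat{\mathcal{C}}_{m=1}$ thanks to $W\in\mathcal{C}[\rho]$. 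The paper's route is shorter and needs no complex analysis; yours has the pleasant side effect that the Cauchy estimate on the second Taylor coefficient shows the $o(\|h\|)$ remainder is actually $O(\eta^n\|h\|^2)$ uniformly in $n$, which is not visible from the paper's chain of inequalities. In both arguments the hypothesis $W\in\mathcal{C}[\rho]$ is what makes the local comparison work: for the paper, to control $d_{\mathcal{C}}(W,W_h)$ by the Grassmannian distance; for you, to ensure $\iota(B_\rho(0))\subset\widehat{\mathcal{C}}_{m=1}$ so that the Cauchy contour stays in the domain of $\pi^{(n)}$.
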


\begin{proof}
Let $W$ and $W_h$ be the vector spaces represented by $x$ and $x+h$. Then
$$\begin{array}{rcll}
\|\pi^{(n)}(x+h) - \pi^{(n)}(x) \| &= &\left\|\frac{\widehat{M}^{(n)}(x+h)}{\widehat{m}(\widehat{M}^{(n)}(x+h))} - \frac{\widehat{M}^{(n)}x}{\widehat{m}(\widehat{M}^{(n)}x)} \right\| \\
&\leq & p! \frac{K^p}{\|m\|^p}d_\mathcal{C}(M^{(n)}W,M^{(n)}W_h) \\
&\leq & p! \frac{K^p}{\|m\|^p} \eta^n d_\mathcal{C}(W,W_h) &\textnormal{by Proposition \ref{contraction}}\\
&\leq &p! \frac{K^p}{\|m\|^p}C_1 \eta^nd_{Gr}(W,W_h) &\textnormal{where $C_1$ is given by Lemma \ref{aperture-2}}\\
&\leq & p! \frac{K^p}{\|m\|^p}C_1 C_2 \eta^nd_{\wedge}(W,W_h) &\textnormal{where $C_2$ is given by Theorem \ref{metriques-equivalentes}}\\
&\leq & p! \frac{K^p}{\|m\|^p}C_1 C_2 \eta^n\|\frac{x+h}{\|x+h\|}-\frac{x}{\|x\|} \| \\
&\leq & C \eta^n(\|h\|+o(\|h\|)
\end{array}$$
\end{proof}

\begin{lemma}
\label{v analytic}
The map $t \in \mathbb{D} \mapsto v^*(t) \in "\mathscr{E}(\Omega,G_p(\mathcal{C}))"$ is analytic.
\end{lemma}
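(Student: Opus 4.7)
The plan is to realize $v^*(t)$ as the uniform limit on $\mathbb{D}$ of a sequence of Banach-space-valued analytic maps and then invoke the Weierstrass convergence theorem for Banach-valued holomorphic functions.

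First I set up an ambient Banach space. As described just before Definition~\ref{defipit}, the chart $A \in \mathcal{L}(W_0,V_0) \mapsto \graph A$ (with $V_0 = \ker m$ and $W_0 \in G_p(\mathcal{C})$ any base point) covers $G_p(\mathcal{C})$, and its image is bounded by $K+1$. Thus $\mathscr{E}(\Omega, G_p(\mathcal{C}))$ may be identified with a closed bounded subset of the Banach space $\mathscr{B}(\Omega,\mathcal{L}(W_0,V_0))$, and analyticity of $t \mapsto v^*(t)$ will mean analyticity into this Banach space.

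Next I iterate. Pick any constant $v_0 \in \mathscr{E}(\Omega, G_p(\mathcal{C}))$ and set $v_n(t):=\pi_t^n(v_0)$. The map $t \mapsto M_\omega(t)$ is analytic by hypothesis with bounds uniform in $\omega$ thanks to Assumption~2 of Theorem~\ref{theorem produit}; hence $\widehat{M}_\omega(t)$ is analytic, and so is the scalar $\widehat{m}(\widehat{M}_\omega(t)v)$. Because $M_\omega(t)(\mathcal{C}) \subset \mathcal{C}[\rho]$, Lemma~\ref{encadrementnormeM} bounds $|\widehat{m}(\widehat{M}_\omega(t)v)|$ away from $0$ uniformly in $\omega$ and in $v \in \widehat{\mathcal{C}}_{m=1}$, so $\pi_{M_\omega(t)}$ is analytic in $t$ with estimates uniform in $\omega$. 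By induction, each $v_n$ is analytic as a map $\mathbb{D} \to \mathscr{B}(\Omega,\mathcal{L}(W_0,V_0))$.

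Lemma~\ref{contractionpim} then yields $\|v_{n+1}(t)-v_n(t)\|_{\wedge,p} \leq C \eta^n$ with $C$ independent of $t$, $\omega$ and of the starting point. The gauge $\|\cdot\|_{\wedge,p}$ and the chart norm on $\mathcal{L}(W_0,V_0)$ are bi-Lipschitz equivalent on the bounded image of $G_p(\mathcal{C})$ (by the embedding statement immediately following Proposition~\ref{homeo grassmannienne}), so this estimate transfers to the Banach-space norm and $v_n \to v^*$ uniformly on $\mathbb{D}$. Weierstrass's theorem for Banach-valued holomorphic functions then yields analyticity of $v^*$ on $\mathbb{D}$. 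The main technical obstacle is the uniformity in $\omega$ at every step: it rests on the lower bound of Lemma~\ref{encadrementnormeM}, on Assumption~2 providing uniform control of $\|M_\omega(t)\|$ and $\|\frac{d}{dt}M_\omega(t)\|$, and on the fact that the image of $G_p(\mathcal{C})$ through the chosen chart is uniformly bounded, so that all hidden constants depend only on $p$, $K$, $\rho$ and $\Delta$.
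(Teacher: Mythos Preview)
Your approach is correct and genuinely different from the paper's. The paper applies the analytic implicit function theorem: it passes to the chart $\mathcal{L}(W_0,V_0)$, considers $(t,A)\mapsto A-\tilde\pi_t(A)$, and uses Lemma~\ref{pim differentiable} to show that the $A$-derivative at the fixed point is $Id-T_0$ with $\|T_0^n\|\le C\eta^n$, hence invertible; the IFT then produces an analytic branch $t\mapsto A^*(t)$. You instead build $v^*(t)$ as the uniform-in-$(t,\omega)$ limit of the iterates $v_n(t)=\pi_t^n(v_0)$, get the Cauchy estimate directly from Lemma~\ref{contractionpim}, and conclude by the Banach-valued Weierstrass theorem.

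Your route is more elementary in that it bypasses Lemma~\ref{pim differentiable} altogether and uses only the contraction estimate already available. The cost is the inductive verification that each $v_n$ is analytic as a map into $\mathscr{B}(\Omega,\mathcal{L}(W_0,V_0))$, not merely $\omega$-pointwise; this is where Assumption~2 (through Lemma~\ref{derivee chapeau}) and the lower bound of Lemma~\ref{encadrementnormeM} are genuinely needed to make the $t$-derivatives of the iterates uniformly bounded in $\omega$. One small correction: Lemma~\ref{encadrementnormeM} requires the input to represent a subspace in $\mathcal{C}[\rho]$, not an arbitrary $v\in\widehat{\mathcal{C}}_{m=1}$, so you should start from $v_0\in G_p(\mathcal{C}[\rho])$ (non-empty by hypothesis); after one iteration all $v_n$ land there anyway since $M_\omega(t)(\mathcal{C})\subset\mathcal{C}[\rho]$. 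The paper's IFT argument hides the analogous analyticity check of $\tilde\phi$ but makes the spectral-radius mechanism explicit, which is closer to a parametrized fixed-point theorem; your argument trades that structural statement for a direct and shorter computation.
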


\begin{proof}

If $0<\rho' < \rho$, $\mathcal{C}[\rho']$ is a neighborhood of $\mathcal{C}[\rho]$ in $G_p(E)$. Indeed, let $W \subset \mathcal{C}[\rho]$ and let $W' \in G_p(E)$ such that $d_H(W,W')< d$ ($d$ will be fixed later). Let $x' \in W'$ and $y' \in E$ such that $\|y'\| \leq \rho' \|x'\|$. As $d_H(W,W')< d$, there exists $x \in W$ such that $\|x-x'\|<d\|x\|$. Then
$$\|(x'+y')-x\| \leq \|x'-x\|+\|y'\| \leq d\|x\|+\rho' \|x'\| \leq d\|x\|+\rho' (d+1)\|x\| \leq \left(d+\rho' (d+1)\right)\|x\|.$$
Taking $d=\frac{\rho -\rho'}{2(1+\rho')}$, we get $d+\rho' (d+1)=\frac{\rho+\rho'}{2} < \rho$ which gives $\|(x'+y')-x\| \leq \rho\|x\|$ and therefore $x'+y' \in \mathcal{C}$. This shows that $x' \in  \mathcal{C}[\rho']$ hence $W' \subset \mathcal{C}[\rho']$. We proved $B_{d_H}(W,d) \subset \mathcal{C}[\rho']$ for all $W \subset \mathcal{C}[\rho]$, therefore $\mathcal{C}[\rho']$ is a neighborhood of $\mathcal{C}[\rho]$ in $G_p(E)$.


Consider the map

$$\Phi : \begin{array}{ccc}
\mathbb{D}\times "\mathscr{E}(\Omega,G_p(\mathcal{C}[\rho/2]))" &\longrightarrow & \mathscr{E}"(\Omega,G_p(\mathcal{C}))" \\
(t,v:\omega\mapsto v_\omega) &\mapsto &\pi_t(v)
\end{array}$$


To see if this map is analytic, we remember that there is a chart of $G_p(E)$ which covers $G_p(\mathcal{C})$ and in which the image of $G_p(\mathcal{C})$is bounded: the image of $G_p(\mathcal{C})$ in some chart $\mathcal{L}(W_0,V_0)$ is bounded by $(K+1)$. In this chart, we can write $\phi$ as 
$$\tilde{\phi} : \begin{array}{ccc}
\mathbb{D}\times B\left(\mathscr{B}(\Omega,\mathcal{L}(W_0,V_0)),K+1\right) &\longrightarrow & \mathscr{B}(\Omega,\mathcal{L}(W_0,V_0)) \\
(t,A:\omega\mapsto A_\omega) &\mapsto &\tilde{\pi_t}(A)
\end{array}$$

We note $A^*(t)$ the element of $B\left(\mathscr{B}(\Omega,\mathcal{L}(W_0,V_0)),K+1\right)$ associated to the fixed point $v^*(t)$.
 Let $t_0 \in \mathbb{D}$ and let's denote $T_0=D_A\tilde{\pi_{t_0}}(A^*(t_0))$ the derivative of $\tilde{pi_{t_0}}$ at the fixed point $A^*(t_0)$. By Lemma \ref{pim differentiable}, $T_0^n=D_A\tilde{\pi_{t_0}^{(n)}}(A^*(t_0))$ verifies $\|T_0^n\|\leq C \eta^n$ for some constant $C$. Then the spectral radius of $T_0$ is less that $\eta$ and therefore the derivative $Id - T_0$ of $v \mapsto A - \tilde{\pi_t}(A)$ is invertible at the fixed point $A^*(t_0)$.
 
We can now apply the implicit function theorem and conclude there is an analytic function $t \mapsto A^*(t)$ defined on a neighborhood of $t_0$ for which $A^*(t) - \tilde{\pi_t}(A^*(t))=0$. This proves the analyticity of $t \mapsto A^*(t)$ and therefore the analyticity of $t \mapsto v^*(t)$.

\end{proof} 

\begin{lemma}
\label{derivee chapeau}
For $t \in \mathbb{D}$,
$$\left\|\dfrac{d}{dt} \widehat{M}_\omega(t) \right\| \leq p \left\| \bigwedge\nolimits^{p-1}M_\omega(t) \right\| \left\|\dfrac{d}{dt} M_\omega(t) \right\|$$
Under the assumptions of Theorem \ref{theorem produit}, this gives
$$\left\|\dfrac{d}{dt} \widehat{M}_\omega(t) \right\| \leq p C \left\|\widehat{M}_\omega(t) \right\|$$
where $C = \sup \left\{ \dfrac{\|\bigwedge\nolimits^{p-1}M_\omega(t)\|}{\|\bigwedge\nolimits^{p}M_\omega(t)\|}\|\frac{d}{dt}M_\omega(t)\| : \omega \in \Omega, t \in \mathbb{D}\right\}$.
\end{lemma}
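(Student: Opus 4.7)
The plan is to reduce to decomposable tensors via the Leibniz rule, bound each summand by means of Lemma~\ref{p->p+1}(ii), then pass to arbitrary $u \in \bigwedge\nolimits^pE$ using the infimum characterization of $\|.\|_{\wedge,p} = \|.\|_{\wedge 2, p}$ from Definition~\ref{defi2}. Write $M = M_\omega(t)$ and $M' = \frac{d}{dt}M_\omega(t)$ for brevity.

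First, for a decomposable tensor $y_1 \wedge \cdots \wedge y_p$, differentiating componentwise yields
\[
\frac{d}{dt}\widehat{M}_\omega(t)(y_1 \wedge \cdots \wedge y_p) = \sum_{i=1}^{p} My_1 \wedge \cdots \wedge M'y_i \wedge \cdots \wedge My_p.
\]
For each $i$, rearrange the $i$-th term to put $M'y_i$ in the leading position (picking up a sign $(-1)^{i-1}$), and identify the remaining wedge as $\bigwedge\nolimits^{p-1}M$ applied to $y_1 \wedge \cdots \wedge \widehat{y_i} \wedge \cdots \wedge y_p$. I would apply Lemma~\ref{p->p+1}(ii) to factor out $M'y_i$, then use the definition of the operator norm of $\bigwedge\nolimits^{p-1}M$ on $(\bigwedge\nolimits^{p-1}E, \|.\|_{\wedge,p-1})$, together with the easy bound $\|y_1 \wedge \cdots \wedge \widehat{y_i} \wedge \cdots \wedge y_p\|_{\wedge,p-1} \leq \prod_{j \neq i}\|y_j\|$ (immediate from Definition~\ref{defi2}). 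This controls the $i$-th term by $\|M'\|\|\bigwedge\nolimits^{p-1}M\| \prod_j \|y_j\|$, and summing over $i$ produces the claimed factor of $p$.

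To extend to an arbitrary $u \in \bigwedge\nolimits^pE$, I would take any decomposition $u = \sum_k x_1^k \wedge \cdots \wedge x_p^k$. By linearity of the derivative and the triangle inequality, the bound on decomposable tensors gives
\[
\bigl\|\tfrac{d}{dt}\widehat{M}_\omega(t)(u)\bigr\|_{\wedge,p} \leq p \|\bigwedge\nolimits^{p-1}M\| \|M'\| \sum_k \|x_1^k\| \cdots \|x_p^k\|.
\]
Passing to the infimum over decompositions and invoking Definition~\ref{defi2} yields the first inequality of the lemma. The second inequality is then immediate: by the definition of $C$ in the statement and assumption 2 of Theorem~\ref{theorem produit}, we have $\|\bigwedge\nolimits^{p-1}M_\omega(t)\| \|M'\| \leq C \|\bigwedge\nolimits^{p}M_\omega(t)\| = C \|\widehat{M}_\omega(t)\|$, and substituting into the first inequality produces the $pC\|\widehat{M}_\omega(t)\|$ bound.

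No step presents a serious obstacle; the mild subtlety is that the per-term Leibniz bound initially appears in terms of $\prod_j \|y_j\|$ rather than $\|y_1 \wedge \cdots \wedge y_p\|_{\wedge,p}$, and these two quantities are generally unequal. This is precisely handled by the infimum definition of $\|.\|_{\wedge 2, p}$, which is why the choice $\|.\|_{\wedge,p} = \|.\|_{\wedge 2, p}$ fixed at the start of this section is essential here (the analogous estimate for $\|.\|_{\wedge 1, p}$ would pick up an unnecessary factorial factor via Proposition~\ref{prop1}).
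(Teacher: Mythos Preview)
Your proof is correct and follows essentially the same approach as the paper: the Leibniz rule on decomposable tensors, a per-term bound yielding $p\,\|\bigwedge^{p-1}M\|\,\|M'\|\prod_j\|y_j\|$, and extension to arbitrary $u$ via the infimum defining $\|\cdot\|_{\wedge 2,p}$. The only difference is cosmetic: you explicitly invoke Lemma~\ref{p->p+1}(ii) and the rearrangement, whereas the paper writes the bound directly.
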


\begin{proof}
See Appendix.
\end{proof}

\begin{proof}[Proof Theorem \ref{theorem produit}]

Let $\omega \in \Omega$. The map $t  \mapsto p_\omega(t)$ is analytic, but what we truly desire is the analyticity of $t  \mapsto \log p_\omega(t)$, which requires to define a complex logarithm in a consistent way. Let $\epsilon >0$ and $t_0 \in \mathbb{D}$. As $t \mapsto v^*(t)$ is analytic, there exist $\delta >0$ such that  $\|v^*_{\tau\omega}(t)-v^*_{\tau\omega}(t_0)\| \leq \epsilon$ for $|t-t_0|< \delta$. By Lemma \ref{derivee chapeau}, we can choose $\delta$ small enough such that $\|\widehat{M}_\omega(t)-\widehat{M}_\omega(t_0)\| \leq \epsilon\|\widehat{M}_{t_0}\|$. Then for $|t-t_0|< \delta$, we get

$$\begin{array}{rcl}
|p_\omega(t)-p_\omega(t_0)| &=& |\widehat{m}(\widehat{M}_\omega(t)v^*_{\tau\omega}(t)-\widehat{M}_\omega(t_0)v^*_{\tau\omega}(t_0))| \\
&\leq & K^p\|(\widehat{M}_\omega(t)v^*_{\tau\omega}(t)-\widehat{M}_\omega(t_0)v^*_{\tau\omega}(t_0)\| \\
&\leq & K^p\|\widehat{M}_\omega(t)-\widehat{M}_\omega(t_0)\| \|v^*_{\tau\omega}(t)\| +K^p\|\widehat{M}_\omega(t_0)\| \|v^*_{\tau\omega}(t)-v^*_{\tau\omega}(t_0)\| \\
&\leq & K^p \epsilon + K^p\|\widehat{M}_\omega(t_0)\| \|v^*_{\tau\omega}(t)-v^*_{\tau\omega}(t_0)\| \\
&\leq & 2K^p \epsilon \|\widehat{M}_\omega(t_0)\| \\
\end{array}$$

By Lemma \ref{encadrementnormeM},  $|p_\omega(t_0)| \geq C_1 \|\widehat{M}_\omega(t_0)\|$ for some constant $C_1 >0$, hence
$$\left|\dfrac{p_\omega(t)}{p_\omega(t_0)}-1 \right| \leq 2 K^p \epsilon /C_1.$$

Fixing $\epsilon$ small enough such that $2 K^p \epsilon /C_1 < 1/2$, we have $\left|\dfrac{p_\omega(t)}{p_\omega(t_0)}-1 \right| \leq 1/2$. Then, for $|t-t_0| \leq \delta$,
$$\chi(t)-\chi(t_0) = \int \log \left|\dfrac{p_\omega(t)}{p_\omega(t_0)}\right| d\mu(\omega) = \textnormal{Re} \int \log \dfrac{p_\omega(t)}{p_\omega(t_0)} d\mu(\omega)$$
where $\log$ is the usual logarithm on $\mathbb{C}\setminus \mathbb{R}_-$. This shows that $t \mapsto \chi(t)$ is real-analytic.
\end{proof}

\begin{ex}
Let $(\xi_n)_{n \geq 0}$ be a sequence of independent and identically distributed random variables with values in $\mathbb{D}$. Consider for $t \in \mathbb{D}$ 
$$M_n(t) = \begin{pmatrix}
10+t\xi_n &t+\xi_n & it \\
t+\xi_n & 6 &\xi_n \\
i\xi_n &0 &1
\end{pmatrix}$$

With some computations, we can show that $M_n(t)$ maps $\mathcal{C}_{\pi,1}$ into $\mathcal{C}_{\pi,0.85}$, where $\pi$ is the orthogonal projection on the first two coordinates. Conditions $1$ and $3$ of Theorem \ref{theorem produit} are obviously satisfied, and condition $2$ is also satisfied as $\det M_n(t) \geq 43$ and $\|M_n(t)\|$ and $\|\frac{d}{dt}M_n(t)\|$ are bounded. By Theorem \ref{theorem produit}, a.s. 
$$\chi_2(t)=\lim \dfrac{1}{n} \log \left\| (M_1(t) \wedge M_1(t))\cdots (M_n(t) \wedge M_n(t))\right\|$$
exists and defines a harmonic function of $t \in \mathbb{D}$. We can do a similar reasoning to show that $\chi_1(t)=\lim \dfrac{1}{n} \log \left\| M_1(t)\cdots  M_n(t)\right\|$ exists and defines a harmonic function of $t \in \mathbb{D}$. Thus the first two characteristic exponents of the random product of $t \mapsto (M_n(t))$ are harmonic. Moreover, by Birkhoff's theorem, we get $$\chi_3(t)=\lim \dfrac{1}{n} \log \det \left(M_1(t)\cdots  M_n(t)\right)= \mathbb{E}\left[\log \begin{vmatrix}
10+t\xi_0 &t+\xi_0 & it \\
t+\xi_0 & 6 &\xi_0 \\
i\xi_0 &0 &1
\end{vmatrix}\right],$$thus all three characteristic exponents of the random product of $t \mapsto (M_n(t))$ are harmonic.
\end{ex}

\section*{Appendix}

In this appendix we provide the proofs which only involve computations on $\bigwedge\nolimits^pE$.
\begin{customdefi}{\ref{defi1}}
If $l_1,\cdots,l_p \in E'$ and $x_1,\cdots,x_p \in E$, we define $$\langle l_1 \wedge \cdots \wedge l_p ,x_1 \wedge \cdots \wedge x_p \rangle = \det ((\langle l_i, x_j\rangle)_{i,j})$$
which can be extended to a linear form $l_1 \wedge \cdots \wedge l_p$ on $\bigwedge\nolimits^p E$. We define a norm $||.||_{\wedge 1, p}$ on $\bigwedge\nolimits^p E$ by

\[ ||x||_{\wedge1, p} = \underset{\begin{array}{c}l_1,\cdots,l_p \in E' \\ ||l_1||=\cdots =||l_p||=1 \end{array}}{\sup} \left| \langle l_1 \wedge \cdots \wedge l_p ,x \rangle \right| \]
\end{customdefi}

\begin{proof}
The homogeneity and the triangle inequality are obvious. Let $x \in \bigwedge\nolimits^p E$ such that $||x||_{\wedge1, p}=0$. Let $x= \sum\limits_i x_1^i \wedge \dots \wedge x_p^i$ be a decomposition of $x$. If we take $(f_1,\dots,f_k)$ a basis of Span$((x_j^i)_{i,j})$, then we can write $x= \sum\limits_{1\leq i_1< \dots < i_p \leq k} a_{i_1,\dots,i_p} f_{i_1} \wedge\dots	\wedge f_{i_p}.$
Taking for $(l_1,\dots,l_k)$ the dual basis of $(f_1,\dots,f_k)$ and extending these linear forms to $E$ with the Hahn-Banch theorem, we get $0=\langle l_{i_1} \wedge \cdots \wedge l_{i_p} ,x \rangle = a_{i_1,\dots,i_p}$, hence $x=0$. This proves the separation of the norm $||.||_{\wedge 1, p}$.
\end{proof}

\begin{customdefi}{\ref{defi2}}
We define a norm $||.||_{\wedge 2, p}$ on $\bigwedge\nolimits^p E$ by
\item \[ \|x\|_{\wedge2,p} = \inf \underset{i}{\sum} \|x^i_1\| \cdots \|x^i_p \| \]
where the infimum is taken over all decompositions $x = \underset{i}{\sum} x^i_1 \wedge \cdots \wedge x^i_p $.
\end{customdefi}

\begin{proof}
Let $x,y \in \bigwedge\nolimits^p E$ and $\lambda \in \mathbb{C}$. If $x = \underset{i}{\sum} x^i_1 \wedge \cdots \wedge x^i_p $ is a decomposition of $x$, then $\lambda x = \underset{i}{\sum} \lambda x^i_1 \wedge \cdots \wedge x^i_p $ is a decomposition of $\lambda x$. Taking the infimum over all decompositions of $x$, we get $\|\lambda x\|_{\wedge2,p} \leq |\lambda|  \|x\|_{\wedge2,p}$. If $\lambda=0$, then $\|0. x\|=0=0.\|x\|_{\wedge2,p}$. If $\lambda \neq 0$, then our reasoning gives  $\| x\|_{\wedge2,p} \leq \frac{1}{|\lambda|}  \|\lambda x\|_{\wedge2,p}$, hence $\|\lambda x\|_{\wedge2,p}=|\lambda|  \|x\|_{\wedge2,p}$. \\
If $x = \underset{i}{\sum} x^i_1 \wedge \cdots \wedge x^i_p $ and $y = \underset{j}{\sum} y^j_1 \wedge \cdots \wedge y^j_p $, then $x+y = \underset{i}{\sum} x^i_1 \wedge \cdots \wedge x^i_p + \underset{j}{\sum} y^j_1 \wedge \cdots \wedge y^j_p $ is a decomposition of $x+y$, thus $\underset{i}{\sum} \|x^i_1\| \cdots \|x^i_p \| + \underset{j}{\sum} \|y^j_1\| \cdots \|y^j_p \| \geq \|x+y\|_{\wedge2,p} $. Taking the infimum over all decompositions of $x$ and $y$, we get $\|x\|_{\wedge2,p} +\|y\|_{\wedge2,p}\geq \|x+y\|_{\wedge2,p} $. \\
The separation comes from the following proposition.
\end{proof}

\begin{customprop}{\ref{prop1}}
On $\bigwedge\nolimits^p E$, $\|.\|_{\wedge1,p} \leq (\sqrt{p})^p \|.\|_{\wedge2,p}$.
\end{customprop}

\begin{proof}
Let $x = \underset{i}{\sum} x^i_1 \wedge \cdots \wedge x^i_p$ and let $l_1, \cdots, l_p \in E'$ with $\|l_1\| = \cdots = \|l_p \| = 1$. Then 
$$\begin{array}{rcl}

|\langle l_1 \wedge \cdots \wedge l_p , x \rangle| &= &|\underset{i}{\sum} \langle l_1 \wedge \cdots \wedge l_p , x^i_1 \wedge \cdots \wedge x^i_p \rangle| \\
&\leq &\underset{i}{\sum} |\langle l_1 \wedge \cdots \wedge l_p , x^i_1 \wedge \cdots \wedge x^i_p \rangle| \\
&\leq &\underset{i}{\sum} \left|\det \begin{pmatrix}
\langle l_1, x^i_1 \rangle & \cdots & \langle l_p, x^i_1 \rangle\\
\vdots & &\vdots \\
\langle l_1, x^i_p\rangle &\cdots &\langle l_p, x^i_p\rangle
\end{pmatrix} \right|\\
&\leq &\underset{i}{\sum} (\sqrt{p})^p\|x^i_1 \|  \cdots \|x^i_p \|.
\end{array}$$ 
Taking the supremum over all $l_i$ and the infimum over all decompositions of $x$, we get $\|x\|_{\wedge1,p} \leq p! \|x\|_{\wedge2,p}$.
\end{proof}

Let's now look at some properties of these norms.

\begin{customlemma} {\ref{p->p+1}}
Let $x \in E$ and $u \in \bigwedge\nolimits^pE$. Then
\begin{enumerate}[(i)]
\item $|| x \wedge u ||_{\wedge1,p+1} \leq (p+1) ||x||.||u||_{\wedge1,p}$
\item $|| x \wedge u ||_{\wedge2,p+1} \leq ||x||.||u||_{\wedge2,p}$
\end{enumerate}
\end{customlemma}

\begin{proof}
\begin{enumerate}[(i)]
\item Let $x \in E$ and let $l_1,l_2, \cdots, l_{p+1}$ be independent linear forms of norm 1 on $E$. Let $\hat{l_i} = l_1 \wedge \cdots l_{i-1} \wedge l_{i+1} \wedge \cdots \wedge l_{p+1}$ and $f: u \in \bigwedge\nolimits^mE \mapsto \underset{i=1}{\overset{p+1}{\sum}} (-1)^{i+1}l_i(x)\hat{l_i}(u)$. As for all $(x_1,\cdots,x_p) \in E^p$ we have $ l_1 \wedge \cdots \wedge l_{p+1} (x \wedge x_1 \wedge \cdots \wedge x_p) = f(x_1 \wedge \cdots \wedge x_p)$, we have for all $u \in \bigwedge\nolimits^pE$ : 
\[  l_1 \wedge \cdots \wedge l_{p+1} (x \wedge u) = f(u) = \underset{i=1}{\overset{p+1}{\sum}} (-1)^{i+1}l_i(x)\hat{l_i}(u) \]
Hence $| l_1 \wedge \cdots \wedge l_{p+1} (x \wedge u)| \leq \underset{i=1}{\overset{p+1}{\sum}}|l_i(x)|.|\hat{l_i}(u)| \leq  (p+1) ||x||.||u||_{\wedge,p}$ and therefore $|| x \wedge u ||_{\wedge,p+1} \leq (p+1) ||x||.||u||_{\wedge,p}$.
\item Let $u=\underset{i}{\sum} u^i_1 \wedge \cdots \wedge u^i_p$. Then $x \wedge u =\underset{i}{\sum} x \wedge u^i_1 \wedge \cdots \wedge u^i_p$ is a decomposition of $x \wedge u$, hence $\|x \wedge u\|_{\wedge2,p+1} \leq \underset{i}{\sum} \|x\|.\| u^i_1 \|\cdots \| u^i_p\|$. Taking the infimum over all decompositions of $u$, we get $|| x \wedge u ||_{\wedge2,p+1} \leq ||x||.||u||_{\wedge2,p}$.
\end{enumerate}

\end{proof}

\begin{customlemma}{\ref{norme-difference}}
Let $x_1, \cdots , x_p , y_1 ,\cdots , y_p \in E$ such that $\forall i, ||x_i|| = ||y_i|| = 1$. Then 
\begin{enumerate}[(i)]
\item $||x_1 \wedge \cdots \wedge x_p - y_1 \wedge \cdots \wedge y_p ||_{\wedge1,p} \leq p.p!.\max (||x_i - y_i||)$,
\item $||x_1 \wedge \cdots \wedge x_p - y_1 \wedge \cdots \wedge y_p ||_{\wedge2,p} \leq p.\max (||x_i - y_i||)$.
\end{enumerate}
\end{customlemma}

\begin{proof}

For $k \in \{1,\dots,p\}$, we set $\alpha_k=x_1 \wedge \dots \wedge x_k$ and $\beta_k= y_1 \wedge \dots \wedge y_k$. Note that $\alpha_k=\alpha_{k-1}\wedge x_k$ and $\beta_k= \beta_{k-1} \wedge y_k$.

Let $d= \max(||x_i - y_i||)$. Then
\[
\begin{array}{llr}
||\alpha_p - \beta_p ||_{\wedge1,p} &\leq || \alpha_{p-1} \wedge (x_p - y_p) ||_{\wedge1,p} + || (\alpha_{p-1}-\beta_{p-1}) \wedge y_p||_{\wedge1,p} \\
&\leq p.d.||\alpha_{p-1}||_{\wedge1,p-1} + p.||\alpha_{p-1} - \beta_{p-1}||_{\wedge1,p-1} \\
&\leq  d.p! + p.||\alpha_{p-1} - \beta_{p-1}||_{\wedge1,p-1}
\end{array}\]
and the desired result follows from an easy induction.
A similar reasoning gives the result for $\|.\|_{\wedge 2,p}$.
\end{proof}

\begin{customlemma}{\ref{normetenseur}}
Let $M \in \mathcal{L}(E)$ and consider the operator norm of $\widehat{M}$ on $\bigwedge\nolimits^pE$ given by \linebreak $\|\widehat{M}\|_{\wedge,p}:=\underset{\|x\|_{\wedge,p}=1}{\sup} \|\widehat{M}(x) \|_{\wedge,p}$. Then this supremum is given by its value on the set of decomposable tensors, i.e.
$$\|\widehat{M}\|_{\wedge,p}=\underset{\|x_1\wedge \cdots \wedge x_p\|_{\wedge,p}=1}{\sup} \|\widehat{M}(x_1\wedge \cdots \wedge x_p) \|_{\wedge,p}.$$

\end{customlemma}

\begin{proof} Let $\|\widehat{M}\|_0:=\underset{\|x_1\wedge \cdots \wedge x_p\|_{\wedge,p}=1}{\sup} \|\widehat{M}(x_1\wedge \cdots \wedge x_p) \|_{\wedge,p}$. By definition of $\|\widehat{M}\|$ we have $\|\widehat{M}\|_{\wedge,p} \geq \|\widehat{M}\|_0$. Let $u \in \bigwedge\nolimits^pE$ and write $u=\sum x_1^i \wedge \cdots \wedge x_p^i$ a decomposition of $u$.
Then $$\|\widehat{M}u\| \leq \sum \|\widehat{M}(x_1^i \wedge \cdots \wedge x_p^i)\| \leq \|\widehat{M}\|_0 \sum \|x_1^i \wedge \cdots \wedge x_p^i\|\leq \|\widehat{M}\|_0 \sum \|x_1^i\| \dots \| x_p^i\|.$$ Taking the infimum over all decompositions of $u$, we get $\|\widehat{M}u\| \leq \|\widehat{M}\|_0. \|u\|$ hence $\|\widehat{M}\| \leq \|\widehat{M}\|_0$.

\end{proof}

\begin{customprop}{\ref{normeproduit}}
Let $M \in \mathcal{L}(E)$. We have
$$\|\widehat{M}\|_{\wedge,p} \leq \|\bigwedge\nolimits^{p-1}M\|_{\wedge,p-1} .\|M\| \leq \|M\|^p$$

Similarly, for a continuous linear map $m : E \rightarrow \mathbb{C}^p$, we have $\|\hat{m}\| \leq \|m\|^p$.
\end{customprop}

\begin{proof}Let $u \in \bigwedge\nolimits^pE$ and write $u=\sum x_1^i \wedge \cdots \wedge x_p^i$ a decomposition of $u$. Then $\widehat{M}u = \sum Mx_1^i \wedge \cdots \wedge Mx_p^i$ is a decomposition of $Mu$ and 
$$\|\widehat{M}u\|_{\wedge,p} \leq \sum \|Mx_1^i\| \dots \|\bigwedge\nolimits^{p-1}M(x_2^i \wedge \cdots \wedge x_p^i\| \leq \sum \|M\| \|\bigwedge\nolimits^{p-1}M\| \|x_1^i\| \dots \|x_p^i\|.$$
Taking the infimum over all decompositions of $u$, we get $\|\widehat{M}u\|_{\wedge,p} \leq \|M\| \|\bigwedge\nolimits^{p-1}M\| \|u\|_{\wedge,p}$, hence $\|\widehat{M}\|_{\wedge,p} \leq \|M\| \|\bigwedge\nolimits^{p-1}M\|$. An easy induction gives the second inequality.

For $m:E \rightarrow \mathbb{C}^p$, a similar computation gives 
$$\begin{array}{rcl}

|\widehat{m}u| &\leq &\sum |\hat{m} (x_1^i \wedge \cdots \wedge x_p^i)|\\
 &\leq& \sum | \det( m(x_1^i), \dots , m(x_p^i))|\\
 &\leq& \sum \| m(x_1^i)\| \cdots \|m(x_p^i)\|\\
 &\leq& \|m\|^p \sum \| x_1^i\| \cdots \|x_p^i\| \end{array}$$
 which gives the desired result.

\end{proof}

\begin{customlemma}{\ref{derivee chapeau}}
For $t \in \mathbb{D}$,
$$\left\|\dfrac{d}{dt} \widehat{M}_\omega(t) \right\| \leq p \left\| \bigwedge\nolimits^{p-1}M_\omega(t) \right\| \left\|\dfrac{d}{dt} M_\omega(t) \right\|$$
Under the assumptions of Theorem \ref{theorem produit}, this gives
$$\left\|\dfrac{d}{dt} \widehat{M}_\omega(t) \right\| \leq p C \left\|\widehat{M}_\omega(t) \right\|$$
where $C = \sup \left\{ \dfrac{\|\bigwedge\nolimits^{p-1}M_\omega(t)\|}{\|\bigwedge\nolimits^{p}M_\omega(t)\|}\|\frac{d}{dt}M_\omega(t)\| : \omega \in \Omega, t \in \mathbb{D}\right\}$.
\end{customlemma}

\begin{proof}
Let $x_1, \dots,x_p \in E$.
$$\begin{array}{rcl}
\dfrac{d}{dt} \widehat{M}_\omega(t) x_1 \wedge \cdots \wedge x_p &= &\dfrac{d}{dt} (M_\omega(t) x_1 \wedge \cdots \wedge M_\omega(t)x_p \\
&= & \sum\limits_{k=1}^p M_\omega(t) x_1 \wedge \cdots \wedge \dfrac{d}{dt} M_\omega(t)x_k \wedge \cdots \wedge M_\omega(t)x_p
\end{array}$$
which gives 
$$\|\dfrac{d}{dt} \widehat{M}_\omega(t) x_1 \wedge \cdots \wedge x_p\| \leq p \|\bigwedge\nolimits^{\!p-1}M_\omega(t)\|.\|\dfrac{d}{dt} M_\omega(t)\| .\|x_1\| \dots\|x_p\|.$$

Let $u \in \bigwedge\nolimits^pE$ and write $u=\sum x_1^i \wedge \cdots \wedge x_p^i$ a decomposition of $u$. Then
$$\|\dfrac{d}{dt} \widehat{M}_\omega(t) u\| \leq p \|\bigwedge\nolimits^{\!p-1}M_\omega(t)\|.\|\dfrac{d}{dt} M_\omega(t)\| . \sum\\|x_1^i\| \dots\|x_p^i\|.$$
Taking the infimum over all decompositions of $u$, we get $\|\dfrac{d}{dt} \widehat{M}_\omega(t) u\| \leq p \|\bigwedge\nolimits^{p-1}M_\omega(t)\|.\|\dfrac{d}{dt} M_\omega(t)\|.\|u\|$, hence $\|\dfrac{d}{dt} \widehat{M}_\omega(t) u\| \leq p \|\bigwedge\nolimits^{p-1}M_\omega(t)\|.\|\dfrac{d}{dt} M_\omega(t)\|$.

\end{proof}


\begin{thebibliography}{BGS}{\small
\addcontentsline{toc}{section}{Bibliographie}


\bibitem[Bir57]{Bir57} 
G.~Birkhoff, {\it Extensions of Jentzsch’s theorem},
Trans. Amer. Math. Soc. 85, 219-227 (1957).


\bibitem[Blu16]{Blu16} 
A.~Blumenthal, {\it A volume-based approach to the multiplicative ergodic theorem on Banach spaces},  Discrete and Continuous Dynamical Systems A 36 (5), 2377-2403 (2016).

\bibitem[BM15]{BM15} 
A.~Blumenthal, I.~Morris, {\it Characterization of dominated splittings for operator cocycles acting on Banach spaces}, submitted.

\bibitem[Dub09]{Dub09} 
L.~Dubois, {\it Projective metrics and contraction principles for complex cones}, Journal of the London Mathematical Society 79, 719–737 (2009).

\bibitem[FGQ15]{FGQ15} 
F.~Froyland,C.~Gonzalez-Tokman, A.~Quas, {\it Stochastic stability of Lyapunov exponents and Oseledets splittings for semi-invertible matrix cocycles}, Communications on Pure and Applied Mathematics 68, 2052-2081 (2015).

\bibitem[Fro08]{Fro08} 
G.~Frobenius, {\it Über Matrizen aus nicht negativen Elementen},
S.-B. Preuss. Akad. Wiss., 456-477 (1908 and 1912).

\bibitem[GTQ15]{GTQ15} 
C.~Gonzalez-Tokman, A.~Quas, {\it A concise proof of the Multiplicative Ergodic Theorem on Banach Spaces}, Journal of Modern Dynamics 9, 237-255 (2015).

\bibitem[Per07]{Per07} 
O.~Perron, {\it Zur Theorie der Matrices},
Math. Ann. 64, 248-263 (1907).

\bibitem[Rue79]{Rue79} 
D.~Ruelle, {\it  Analycity properties of the characteristic exponents of random matrix products}, Advances in Mathematics 32, 68-80 (1979).

\bibitem[Rug10]{Rug10} 
H.H.~Rugh, {\it Cones and gauges in complex spaces: Spectral gaps and complex Perron-Frobenius theory}, Annals of Mathematics 171, 1707–1752 (2010).

}
\end{thebibliography}
\end{document}